\begin{document}




\newtheorem{thm}{Theorem}[section]
\newtheorem{lem}[thm]{Lemma}
\newtheorem{prop}[thm]{Proposition}
\newtheorem{df}[thm]{Definition}
\newtheorem{cor}[thm]{Corollary}
\newtheorem{rem}[thm]{Remark}
\newtheorem{ex}[thm]{Example}
\newenvironment{proof}{\medskip
\noindent {\bf Proof.}}{\hfill \rule{.5em}{1em}\mbox{}\bigskip}

\def\eps{\varepsilon}

\def\GlR#1{\mbox{\it Gl}(#1,\R)}
\def\glR#1{{\frak gl}(#1,\R)}
\def\GlC#1{\mbox{\it Gl}(#1,\C)}
\def\glC#1{{\frak gl}(#1,\C)}
\def\Gl#1{\mbox{\it Gl}(#1)}

\def\ov{\overline}
\def\ot{\otimes}
\def\und{\underline}
\def\w{\wedge}
\def\ra{\rightarrow}
\def\lra{\longrightarrow}

\def\rk{\mbox{rk}}
\def\mod{\mbox{ mod }}

\newcommand{\al}{\alpha}
\newcommand{\be}{\beta}
\newcommand{\ga}{\gamma}
\newcommand{\la}{\lambda}
\newcommand{\om}{\omega}
\newcommand{\Om}{\Omega}
\renewcommand{\th}{\theta}
\newcommand{\Th}{\Theta}
\renewcommand{\phi}{\varphi}

\newcommand{\lb}{\langle}
\newcommand{\rb}{\rangle}

\def\pair#1#2{\left<#1,#2\right>}
\def\big#1{\displaystyle{#1}}

\def\Z{{\Bbb Z}}
\def\R{{\Bbb R}}
\def\C{{\Bbb C}}
\def\O{{\Bbb O}}
\def\CP{{\Bbb C} {\Bbb P}}
\def\HP{{\Bbb H} {\Bbb P}}
\def\P{{\Bbb P}}
\def\Q{{\Bbb Q}}
\def\H{{\Bbb H}}
\def\F{{\Bbb F}}

\def\FV{{\frak F}_V}

\def\so{{\frak {so}}}
\def\co{{\frak {co}}}
\def\sl{{\frak {sl}}}
\def\su{{\frak {su}}}
\def\sp{{\frak {sp}}}
\def\csp{{\frak {csp}}}
\def\spin{{\frak {spin}}}
\def\g{{\frak g}}
\def\h{{\frak h}}
\def\s{{\frak s}}
\def\k{{\frak k}}
\def\l{{\frak l}}
\def\m{{\frak m}}
\def\n{{\frak n}}
\def\t{{\frak t}}
\def\u{{\frak u}}
\def\z{{\frak z}}
\def\p{{\frak p}}
\def\L{{\frak L}}
\def\X{{\frak X}}
\def\gl{{\frak {gl}}}
\def\hol{{\frak {hol}}}
\renewcommand{\frak}{\mathfrak}
\renewcommand{\Bbb}{\mathbb}

\def\hook{\mbox{}\begin{picture}(10,10)\put(1,0){\line(1,0){7}}
  \put(8,0){\line(0,1){7}}\end{picture}\mbox{}}

\def\be{\begin{equation}}
\def\ee{\end{equation}}
\def\bi{\begin{enumerate}}
\def\ei{\end{enumerate}}
\def\ba{\begin{array}}
\def\ea{\end{array}}
\def\bea{\begin{eqnarray}}
\def\eea{\end{eqnarray}}
\def\ben{\begin{enumerate}}
\def\een{\end{enumerate}}
\def\linebr{\linebreak}

\def\sl{\it}

\title{Cohomogeneity one disk bundles with normal homogeneous collars}

\author{Lorenz J. Schwachh\"ofer\thanks{Supported by the Schwerpunktprogramm Differentialgeometrie of the Deutsche
Forschungsgesellschaft} , Kristopher Tapp}




\date{ }
\maketitle


\begin{abstract}
We consider cohomogeneity one homogeneous disk bundles and adress the question when these admit a nonnegatively curved\footnote{Throughout this article, the term ``curvature'' refers to the sectional curvature.} invariant metric with normal collar, i.e., such that near the boundary the metric is the product of an interval and a normal homogeneous space. If such a bundle is not (the quotient of) a trivial bundle, then we show that its rank has to be in $\{2,3,4,6,8\}$. Moreover,  we give a complete classification of such bundles of rank $6$ and $8$, and a partial classification for rank $3$.
\end{abstract}

\section{Introduction}

The search for manifolds of nonnegative curvature is one of the classical problems in Riemannian geometry. One source of examples has been compact Lie groups and their quotients, including homogeneous spaces and biquotients. In addition to these, one has examples formed by glueing two manifolds along a common boundary, first done by J.~Cheeger~(\cite{Ch}). For a detailed survey on the known techniques and examples, we refer to \cite{Z}.

A large family of nonnegatively curved metrics was recently obtained by K.~Grove and W.~Ziller, who investigated closed cohomogeneity one manifolds with two singular orbits~(\cite{GZ1}). Each such manifold is obtained by the glueing along a principal orbit of two {\em cohomogeneity one homogeneous vector bundles}, i.e., bundles of the form
\[
M := G \times_K V \longrightarrow G/K,
\]
where $K \subset G$ are compact Lie groups, and $K$ acts transitively on the unit sphere of a finite dimensional Euclidean vector space $V$ by some orthogonal representation, with isotropy group henceforth denoted $H\subset K$. Grove and Ziller showed that any cohomogeneity one homogeneous vector bundle of rank at most two (that is, $\dim_\R V \leq 2$) admits an invariant nonnegatively-curved metric with a {\em normal homogeneous collar}, i.e., outside a compact set, the metric is $G$-equivariantly isometric to the Riemannian product of an interval and $G/H$ with a normal homogeneous metric. 
Thus, any cohomogeneity one manifold whose singular orbits are of codimension at most two admits a $G$-invariant metric of nonnegative curvature. 

In this article, we address the question of which cohomogeneity one homogeneous vector bundles of rank higher than two admit invariant metrics with nonnegative curvature and normal homogeneous collar. This question is of interest because, for a compact cohomogeneity one manifold $N$ with group diagram $H\subset\{K_+,K_-\}\subset G$, if the vector bundles associated to both halves of this diagram admit such metrics, then $N$ admits an invariant nonnegatively curved metric with  normal homogeneous principle orbits in the middle. Not all bundles admit such metrics because there are cohomogeneity one manifolds which are known not to admit any invariant nonnegatively-curved metric~(\cite{GVWZ}). 

One class of bundles which {\em does }Êadmit nonnegatively curved $G$-invariant metrics with normal homogeneous collar are what we call {\em essentially trivial bundles} by which we mean bundles of the form
\[
M := (G \times L) \times_{K \times L} V \longrightarrow G/K,
\]
where the action of $\{1\} \times L$ on $V$ is transitive on the unit sphere.  Thus, essentially trivial bundles are quotients of the trivial bundle $G \times V = (G \times L) \times_L V$ under the action of a subgroup $K \subset G$, so that the existence of such a metric follows from a constuction in \cite{ST}; cf. Corollary~\ref{cor:almost-trivial} for details. For example, all cohomogeneity one homogeneous vector bundles of rank one are essentially trivial.

Apart from these, our results show that bundles admitting such metrics are scarce.

\begin{thm} \label{thm:rank-restriction}
Let $M := G \times_K V \ra G/K$ be a cohomogeneity one homogeneous vector bundle which is essentially non-trivial. If $M$ admits a $G$-invariant metric with nonnegative curvature and normal homogeneous collar, then the rank of this bundle must be in $\{2, 3, 4, 6, 8\}$.
\end{thm}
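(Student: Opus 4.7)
The plan is to combine the normal homogeneous collar hypothesis with nonnegative curvature in order to extract sharp Lie-theoretic compatibility conditions on the triple $(H\subset K\subset G)$ and the $K$-representation $V$, and then to invoke Borel's classification of compact groups acting transitively on spheres to eliminate all ranks outside $\{2,3,4,6,8\}$.

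I would first fix a bi-invariant metric $Q$ on $G$ inducing the normal homogeneous metric on $G/H$ in the collar, and $Q$-orthogonally decompose $\g=\h\oplus\p\oplus\n$ with $\k=\h\oplus\p$. At the identity coset, $\n$ is the tangent space to the singular orbit $G/K\subset M$ and the normal slice is the $K$-module $V$; as $H$-modules, $V\cong\R\oplus\p$, where $\R$ is the radial direction. On the collar, the metric takes the product form $dt^2+Q|_{\p\oplus\n}$, the principal orbits there are totally geodesic, and the induced metric on them is normal homogeneous from $Q$. Smoothness of the global metric across the singular orbit forces the metric along normal slices to come from a single $K$-invariant inner product on $V$; matching this with the collar product then forces $Q|_\p$ to agree, up to scale, with the round metric on the tangent space of the unit sphere $S(V)\cong K/H$.

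A Gauss-equation analysis at the boundary of the collar, where the second fundamental form of the principal orbits vanishes, then uses nonnegative curvature to pin down further bracket relations---morally, that $[\p,\p]\subset\h$ and that $[\p,\n]$ is sharply controlled---amounting to the statement that $K/H$ is realized as a round normal homogeneous sphere with a compatible bi-invariant embedding into $G$, and that this embedding is not absorbed into the isotropy of a sphere-transitive factor (which would make the bundle essentially trivial in the sense of the discussion preceding the theorem).

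The main obstacle is the ensuing case analysis over the Borel list $(SO(n),SO(n-1))$, $(U(n),U(n-1))$, $(SU(n),SU(n-1))$, $(Sp(n),Sp(n-1))$, $(Sp(n)\cdot U(1),Sp(n-1)\cdot U(1))$, $(Sp(n)\cdot Sp(1),Sp(n-1)\cdot Sp(1))$, $(G_2,SU(3))$, $(Spin(7),G_2)$, $(Spin(9),Spin(7))$. For each pair I would check whether a proper essentially non-trivial embedding $K\subsetneq G$ can satisfy the derived conditions. I expect the surviving ranks to be precisely $\{2,3,4,6,8\}$, corresponding to the spheres $S^1,S^2,S^3,S^5,S^7$; the ranks $5$, $7$, and all ranks $\ge 9$ should be eliminated because for $SO(5)$, $SO(7)$ or $G_2$ (both of which give rank $7$), and the higher $SO(n)$, $U(n)$, $Sp(n)$, $Spin(9)$ families, any compatible enlargement $G\supset K$ is either trivial or can only be realized by absorbing the extra factor into an $L$-group acting transitively on the sphere, producing an essentially trivial bundle.
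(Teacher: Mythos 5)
There is a genuine gap: the conditions you propose to extract from nonnegative curvature plus the collar are not the right ones, and your mechanism cannot produce the condition that actually drives the classification. A Gauss-equation computation at the totally geodesic boundary orbit only tells you that the normal homogeneous metric on $G/H$ has nonnegative curvature, which is automatic and carries no information about the embedding $K\subset G$; and the ``moral'' condition $[\p,\p]\subset\h$ (in the paper's notation, $[\m,\m]\subset\h$) is simply not a consequence of the hypotheses --- it fails for the examples that \emph{do} admit such metrics, e.g.\ for $Spin(p+9)\times_{Spin(8)}\R^8$ the space $\m$ generates all of $\spin(8)$. What the paper actually proves (Theorem~\ref{thm:necessary}) is the quantitative inequality $|X_\m\w Y_\m|\le C\,|[X,Y]|$ for all $X,Y\in\m_1\oplus\s$, and its derivation is not a boundary computation nor a smoothness-at-the-singular-orbit argument: one uses that the disk bounded by the totally geodesic orbit is totally convex with the singular orbit as soul, Perelman's theorem to force $\varphi_t|_\s=Id$ for all $t$ (Lemma~\ref{lem:soul}), Schur's lemma on $\m_1$, and then a differential inequality for the warping function, exploiting nonnegative curvature in the \emph{interior} of the collar where the second fundamental form is nonzero. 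Remark~\ref{rem:limit} shows that even the qualitative statement ``commuting $X,Y$ have linearly dependent $\m$-components'' is strictly weaker than what is needed, so any argument that only yields bracket-vanishing relations cannot close the proof.

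Second, the case analysis over the transitive sphere actions is asserted rather than carried out, and the reason you give for eliminating ranks $5$, $7$, $\ge 9$ is incorrect: it is not true that every compatible enlargement $G\supset K$ in those ranks is essentially trivial. For instance $Sp(2)\subset Sp(p+2)$ (sphere $S^4$ or $S^7$), $Spin(9)\subset F_4$ (sphere $S^{15}$), $SU(4)\subset SU(p+4)$ and $Spin(7)\subset Spin(p+8)$ give genuinely nontrivial candidates; they are excluded (or, in some cases, admitted) only through the structure theory of Section~\ref{sec:restriction}: transitivity of the centralizer of a commuting $\s$-vector on $K_0/H_1$ (Proposition~\ref{prop:stabilizer}), the induced symmetric pair $(\g,\n(\l)\oplus\s_1)$ (Corollary~\ref{cor:symmetric}), the classification of irreducible symmetric spaces with spin-type isotropy (Table~2, Propositions~\ref{prop:symmetric-list} and~\ref{prop:list}), and explicit octonionic and matrix counterexamples to (\ref{eq:condition}) (Proposition~\ref{prop:octonions} and (\ref{eq:couterexample})). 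Without the inequality of Theorem~\ref{thm:necessary} and these eliminations, ``I expect the surviving ranks to be $\{2,3,4,6,8\}$'' remains a conjecture. Note also that rank $4$ survives by default (the machinery is switched off when $\m_1=0$ or $L=SU(2)\cdot SU(2)$), a subtlety your outline does not address.
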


As was previously mentioned, all rank two bundles admit a $G$-invariant metric of nonnegative curvature and normal homogeneous collar by \cite{GZ1}. In the higher rank case, the situation is much more restricted. For rank eight, we have the following complete classification.

\begin{thm} \label{thm:rank-8}
Let $M := G \times_K V \ra G/K$ be a $G$-irreducible cohomogeneity one homogeneous vector bundle which is essentially non-trivial and such that $\dim_\R V = 8$. Then $M$ admits a $G$-invariant metric of nonnegative curvature with normal homogeneous collar if and only if $M$ is finitely $G$-equivariantly covered by one of the following:

\bi
\item 
$Spin(p+9) \times_{Spin(8)} \R^8$ for $p \in \{0,1,2\}$, where $Spin(8)$ acts on $\R^8$ by a spin representation, and $Spin(8) \subset Spin(p+9)$ is the lift of the standard inclusion $SO(8) \subset SO(p+9)$.
\item 
$Spin(p+8) \times_{Spin(7)} \R^8$ for $p \in \{0,1\}$, where $Spin(7)$ acts on $\R^8$ by the spin representation, and $Spin(7) \subset Spin(p+8)$ is the lift of the standard inclusion $SO(7) \subset SO(p+8)$.
\item
$Spin(7) \times_{Spin(6)} \C^4$, with the standard representation of $Spin(6) \cong SU(4)$ on $\C^4$.
\item
A quotient of one of the preceding examples:
\bi
\item
$(Spin(p+9) \cdot G') \times_{Spin(8) \cdot H'} \R^8$ for $p \in \{1,2\}$ and an arbitrary compact Lie group $G'$ and $H' \subset Spin(p+1) \cdot G'$ with $H' \not\subset G'$, which acts trivially on $\R^8$.
\item
$(Spin(9) \cdot G') \times_{Spin(7) \cdot  H'} \R^8$ for an arbitrary compact Lie group $G'$ and $H' \subset Spin(2) \cdot G' = S^1 \cdot G'$ with $H' \not\subset G'$, which acts trivially on $\R^8$.
\item
$(Spin(7) \cdot G') \times_{Spin(6) \cdot S^1 \cdot H'} \C^4$ for an arbitrary compact Lie group $G' \supset S^1 \cdot H'$, where $S^1 \subset G'$ acts on $\C^4$ by multiples of the identity, and $H' \subset G'$ acts trivially.
\ei
\ei
\end{thm}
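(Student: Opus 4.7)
The strategy combines the algebraic necessary conditions developed in the earlier sections of the paper with the classification of transitive actions on $S^7$, and then verifies sufficiency for each listed example.

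Assume $M = G \times_K V$ is $G$-irreducible, essentially non-trivial, of rank $8$, and admits a $G$-invariant metric of nonnegative curvature with normal homogeneous collar. Since $K$ acts transitively on $S^7 \subset V$, the pair $(K,H)$ must appear in the classical list of transitive actions on the $7$-sphere: $(SO(8), SO(7))$, $(U(4), U(3))$, $(SU(4), SU(3))$, $(Sp(2), Sp(1))$, $(Sp(2)\cdot U(1), Sp(1)\cdot U(1))$, $(Sp(2)\cdot Sp(1), Sp(1)\cdot Sp(1))$, or $(Spin(7), G_2)$. One must also keep track of which $8$-dimensional representation of $K$ is used: triality produces three inequivalent representations of $Spin(8)$, and $Spin(7)$ admits both a standard and a spin representation; moreover $SU(4) \cong Spin(6)$, and its standard representation on $\C^4$ is the half-spin representation.

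Next, the algebraic conditions from the earlier sections are applied to the inclusion $\k \subset \g$ together with the isotropy representation of $K$ on $V$. For each candidate $(K,H)$, these conditions should constrain the $\mathrm{Ad}(K)$-invariant complement $\m \subset \g$ of $\k$, and hence pin down the ambient group $G$. The expected picture is that the quaternionic candidates $Sp(2)$, $Sp(2)\cdot U(1)$, $Sp(2)\cdot Sp(1)$, and the unitary candidate $U(4)$ admit no compatible $\m$ other than ones forcing essential triviality; that $(Spin(8), Spin(7))$ with a spin representation forces $G = Spin(p+9)$ with $p \in \{0,1,2\}$; that $(Spin(7), G_2)$ with the spin representation forces $G = Spin(p+8)$ with $p \in \{0,1\}$; and that $(SU(4), SU(3))$ viewed via the half-spin representation yields $G = Spin(7)$. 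The explicit upper bounds on $p$ arise because $\m$ must embed into a specific subspace of $\so(V)$, essentially the orthogonal complement of $\k$ inside the Lie algebra governing the allowed representations, which has bounded dimension.

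The main technical obstacle will be this case-by-case elimination: for each of the seven candidate pairs $(K,H)$ one must identify the relevant subspace of $\so(V)$, enumerate the Lie algebra extensions of $\k$ inside it compatible with the algebraic conditions, and rule out all but the listed possibilities. Once the $G$-irreducible classification is settled, the quotient families in part~(4) arise uniformly by adjoining an arbitrary compact factor $G'$ that commutes with the $V$-action and quotienting by any subgroup $H'$ contained in $Spin(p+1) \cdot G'$, respectively $Spin(2)\cdot G'$ or $S^1 \cdot G'$, that acts trivially on $V$; the condition $H' \not\subset G'$ merely removes redundant descriptions, and both nonnegative curvature and the normal homogeneous collar are inherited through the quotient. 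For sufficiency, each listed base example is built from a symmetric, hence normal homogeneous, space such as $Spin(p+9)/Spin(8)$, $Spin(p+8)/Spin(7)$, or $Spin(7)/Spin(6)$, together with the construction of \cite{ST} already invoked for essentially trivial bundles; this yields an explicit $G$-invariant metric with nonnegative curvature and the required collar structure, which the quotient examples then inherit from their finite covers.
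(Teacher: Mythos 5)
Your outline identifies the right ingredients in broad strokes (a necessary algebraic condition plus a case analysis over transitive actions on $S^7$, and a separate sufficiency construction), but the two places where the actual mathematical content must happen are either missing or wrong. First, the elimination of ambient groups $G$ is not carried out, and the mechanism you propose for it cannot work: you say the bounds on $p$ ``arise because $\m$ must embed into a specific subspace of $\so(V)$ \dots which has bounded dimension.'' But $G$ acts on the total space of the bundle, not linearly on $V$, so neither $\g$ nor $\s=\k^\perp\subset\g$ sits inside $\so(V)$, and no dimension count of $\so(8)$ can bound $p$. In the paper the restriction $p\le 2$ for $Spin(8)\subset Spin(p+9)$ and $p\le 1$ for $Spin(7)\subset Spin(p+8)$ comes from explicit octonionic/triality constructions of commuting elements $X,Y\in\m_1\oplus\s$ with $X_\m\wedge Y_\m\neq 0$ (Proposition~\ref{prop:octonions}), combined with a structural reduction: from condition (\ref{eq:condition}) one proves that stabilizers of suitable $Y_\s\in\s$ act transitively on $K_0/H_1$ (Proposition~\ref{prop:stabilizer}), deduces that $(\g,\n(\l)\oplus\s_1)$ is a symmetric pair (Corollary~\ref{cor:symmetric}), and then invokes the classification of irreducible symmetric spaces whose isotropy restricted to $L=Spin(n)$ is of spin type (Table~2, Propositions~\ref{prop:symmetric-list} and \ref{prop:list}). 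None of this is replaced by anything in your proposal; ``enumerate the Lie algebra extensions of $\k$ inside $\so(V)$'' is not a well-posed substitute.

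Second, your sufficiency argument is not just incomplete but would prove too much. You argue that because the base $Spin(p+9)/Spin(8)$ (etc.) is symmetric, hence normal homogeneous, the construction of \cite{ST} on the fiber yields the desired metric. If that were enough it would apply for every $p$, contradicting the necessity half of the theorem (these bases are symmetric for all $p$, yet for $p\ge 3$ no such metric exists). What the paper actually needs is the curvature condition (\ref{eq:sufficient}) on the triple $H\subset K\subset G$, i.e.\ $|X_\m\wedge Y_\m|\le C|[X,Y]|$ on $\m\oplus\s$, which is verified in \cite{ST1} for exactly the listed triples (via $[\s,\s]\cap[\m,\m]=\{0\}$) and feeds into Theorem~\ref{thm:sufficient} together with the scale-up/scale-down construction of \cite{GZ1}; the quotient examples then follow from Corollary~\ref{cor:almost-trivial}, which also requires checking invariance of the fiber metric under the normalizer, not merely ``inheritance.'' As written, both directions of your argument have gaps precisely where the rank-$8$ classification is decided.
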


Here, for Lie groups $L_1, L_2$, we denote by $L_1 \cdot L_2$ the quotient of $L_1 \times L_2$ by a finite subgroup of the center. The term {\em $G$-irreducible} means that $M$ is not $G$-equivariantly finitely covered by a bundle of the form $(G_1/H_1) \times M'$ with $\dim(G_1/H_1) > 0$ and $M'$ a cohomogeneity one homogeneous vector bundle.  This hypothesis is natural because, for $G$-reducible bundles, our problem easily reduces to deciding whether $M'$ admits such a metric.

Glueing together two disk bundles of the second type of Theorem \ref{thm:rank-8} with $p = 0$, after applying outer automorphisms of $Spin(8)$, we conclude that the primitive cohomogeneity one manifold given by the group diagram $G_2 \subset \{Spin_+(7), Spin_-(7)\} \subset Spin(8)$ admits a metric of nonnegative curvature with a totally geodesic normal homogeneous principal orbit. However, this manifold is diffeomorphic to the sphere $S^{15}$ (\cite{GWZ}).

For rank $6$ bundles, we obtain the following

\begin{thm} \label{thm:rank-6}
Let $M := G \times_K V \ra G/K$ be a $G$-irreducible cohomogeneity one homogeneous vector bundle which is essentially non-trivial and such that $\dim_\R V = 6$. Then $M$ admits a $G$-invariant metric of nonnegative curvature with normal homogeneous collar if and only if $M$ is finitely $G$-equivariantly covered by one of the following:

\bi
\item
$SU(5) \times_{SU(4)} \R^6$, with the irreducible action of $SU(4) \cong Spin(6)$ on $\R^6$.
\item
$(SU(5) \cdot G') \times_{SU(4) \cdot H'} \R^6$ for an arbitrary compact Lie group $G'$ and $H' \subset S^1 \cdot G'$ with $S^1 \subset SU(5)$ being the centralizer of $SU(4)$, and $H' \not\subset G'$ acts trivially on $\R^6$.
\ei
\end{thm}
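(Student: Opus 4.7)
The plan is to combine the classification of compact Lie groups acting transitively on $S^5$ with the structural restrictions developed in the proof of Theorem~\ref{thm:rank-restriction}. Since $K$ acts transitively on the unit sphere of $V\cong\R^6$, the Borel--Montgomery--Samelson classification says that the image of $K$ in $SO(V)$ is, up to covering, one of $SO(6)$, $U(3)$, or $SU(3)$; the corresponding isotropy group $H$ of a unit vector is $SO(5)$, $U(2)$, or $SU(2)$ respectively. Up to finite cover we may therefore assume that $K$ sits inside $Spin(6)\cong SU(4)$ and contains $SU(3)$, and the bundle is then specified by a compact Lie group $G$ together with an embedding $K\hookrightarrow G$.

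First I would eliminate the two cases $K=U(3)$ and $K=SU(3)$, which are distinguished from the remaining one by the existence of a $K$-invariant complex structure on $V\cong\C^3$. I expect these to produce only essentially trivial bundles, by feeding the complex structure into the obstructions used to prove Theorem~\ref{thm:rank-restriction} and inspecting the resulting list of compatible enlargements $G\supset K$. Only the irreducible case $K=SU(4)\cong Spin(6)$, with $H=Sp(2)\cong Spin(5)$ and $V$ the vector representation, then remains.

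In the remaining case I would classify, up to finite $G$-equivariant cover, the possible compact ambient Lie groups $G\supset SU(4)$ that are compatible with a nonnegatively curved invariant metric having a normal homogeneous collar. The compatibility conditions derived earlier constrain the isotropy representation $\g/\k$ and its decomposition as an $H$-module in such a way that $\g/\k\oplus V$ must carry a normal homogeneous, nonnegatively curved metric as an $H$-representation. The minimal non-trivial possibility is $G=SU(5)$ with $SU(4)$ as the standard block, for which $G/K\cong S^9$ and $\g/\k=\C^4\oplus\R$; every other enlargement of $SU(4)$ (inside $SU(n)$ with $n\geq 6$, $Sp(n)$, $SO(n)$, $Spin(n)$, or the exceptional groups containing $SU(4)$) must be checked and shown to violate either the rank restriction of Theorem~\ref{thm:rank-restriction} or the collar condition. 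I expect this enlargement analysis to be the main technical obstacle.

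For the converse, I would verify that $SU(5)\times_{SU(4)}\R^6$ carries a $G$-invariant nonnegatively curved metric with normal homogeneous collar, by constructing it explicitly as a Cheeger-type deformation of the product metric along the collar, in the spirit of \cite{GZ1} and \cite{ST}. The quotients listed in the second bullet then follow by applying the procedure of Corollary~\ref{cor:almost-trivial} to this minimal example.
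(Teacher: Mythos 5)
Your skeleton (Montgomery--Samelson on $S^5$, eliminate the $U(3)$ and $SU(3)$ cases, then pin down the ambient group $G\supset SU(4)$, then construct the metric on the surviving example) matches the shape of the paper's argument, but both load-bearing steps are asserted rather than proved, so there is a genuine gap. For the ``only if'' direction, ``I expect these to produce only essentially trivial bundles'' and ``every other enlargement of $SU(4)$ must be checked'' is precisely the content that the paper supplies in Sections~\ref{sec:curvature}--\ref{sec:restriction} and that your proposal does not: the necessary condition $|X_\m\w Y_\m|\le C|[X,Y]|$ of Theorem~\ref{thm:necessary}, the stabilizer analysis of commuting pairs (Propositions~\ref{prop:stabilizer}--\ref{prop:stabilizer3}), the reduction to a symmetric pair $(\g,\n(\l)\oplus\s_1)$ (Corollary~\ref{cor:symmetric}, Proposition~\ref{prop:N0}) and hence to Table~2, and finally the explicit commuting elements (\ref{eq:couterexample}) together with the $SU(3)\cdot SU(3)$ subgroup construction that eliminate $SU(p+4)$ for $p\ge 2$ in entry~11. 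Note that the rank restriction of Theorem~\ref{thm:rank-restriction} cannot do this work for you: $SU(6)\times_{SU(4)}\R^6$ is still a rank~$6$ bundle, and in the paper Theorem~\ref{thm:rank-restriction} is itself a byproduct of the same machinery, so invoking it is close to circular. Also, the actual reason the $U(3)$ and $SU(3)$ cases are harmless is not the invariant complex structure: Propositions~\ref{prop:stabilizer2} and~\ref{prop:stabilizer3} show that for $L=SU(3)$ no nontrivial $Ad_L$-module in $\s$ is admissible, so $\l$ is an ideal of $\g$, $L$ is normal, and the bundle is essentially trivial.

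The ``if'' direction is also under-justified. A Cheeger-type deformation of a product metric shrinks orbit directions and would not produce a normal homogeneous collar; the paper's construction (Theorem~\ref{thm:sufficient}) instead \emph{enlarges} the $\k$-directions, $g_\eps=Q|_\s+(1+\eps)Q|_\k$, and nonnegative curvature of the relevant planes is then not automatic but holds exactly because the triple $Spin(5)\cong Sp(2)\subset Spin(6)\cong SU(4)\subset SU(5)$ satisfies hypothesis (\ref{eq:sufficient}), verified in \cite{ST1} via $[\s,\s]\cap[\m,\m]=\{0\}$. That Lie-algebraic verification is the substance of the existence half and is absent from your plan; only your final step (deriving the second family by Corollary~\ref{cor:almost-trivial}, with $H'\not\subset G'$ coming from $G$-irreducibility) agrees with the paper as stated.
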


To describe our results for rank three and four bundles, we require some notation for subgroups of the exceptional Lie group $G_2$.  Let $SO(4) \subset G_2$ and $SU(3) \subset G_2$ be the isotropy groups of the symmetric space $G_2/SO(4)$ and the sphere $S^6 = G_2/SU(3)$, respectively. After conjugating these groups appropriately, their intersection can be made isomorphic to $U(2)$, and we let $SU(2)_1 \subset SO(4) \cap SU(3) \subset G_2$ be the simple part of this intersection. Note that $SU(2)_1 \subset SO(4)$ is normal, and we denote its centralizer in $G_2$ by $SU(2)_3 \subset SO(4) \subset G_2$. (The subscripts of the $SU(2)$-subgrous of $SO(4)$ denote their maximal weight for the isotropy representation of $G_2/SO(4)$.) Using this notation, we can make the following statement about the rank three case.

\begin{thm} \label{thm:rank-3}
Let $M := G \times_K V$ be a $G$-irreducible cohomogeneity one homogeneous vector bundle which is essentially non-trivial such that $\dim_\R V = 3$. If $M$ admits a nonnegatively-curved $G$-invariant metric with normal homogeneous collar, then $M$ must be finitely $G$-equivariantly covered by one of the following.
\bi
\item
$M_1 = G_2 \times_{SO(4)} \su(2)_3$ , where $SO(4)$ acts on $\su(2)_3 \lhd \so(4)$ by the adjoint representation.
\item
$M_2 = (Sp(p+1) \cdot G') \times_{Sp(1) \cdot H'} \sp(1)$ with $H' \subset Sp(p) \cdot G'$, where $Sp(1) \cdot H'$ acts on $\sp(1) \lhd \sp(1) \oplus \h'$ by the adjoint representation.
\ei
Further, $M_1$ admits such a metric, as does $M_2$ with $G' = 1$ and $H' = Sp(p)$.
\end{thm}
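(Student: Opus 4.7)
The plan is to handle the two directions of the theorem separately. A preliminary reduction common to both is that since $K$ acts orthogonally on $V=\R^3$ with transitive action on the unit sphere $S^2$, the image of $K$ in $O(V)$ coincides with $SO(3)$, so $\k = \su(2) \oplus \l_0$ where $\l_0$ is an ideal acting trivially on $V$, and $\h = \u(1) \oplus \l_0$. In particular $\k/\h \cong \R^3$ carries the standard three-dimensional representation of the effective $\su(2)$-summand, and the $G$-irreducibility hypothesis prevents splitting off factors tangential to $\l_0$. This places the rank-three analysis in a framework parallel to the rank-six and rank-eight cases already treated.

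For the necessary direction, we apply the algebraic restrictions for the existence of a $G$-invariant nonnegatively curved metric with normal homogeneous collar established earlier in the paper, now to the triple $(\g, \k, \h)$. These restrictions constrain both the way the effective $\su(2)$-summand of $\k$ sits inside $\g$ and the adjoint action of $H$ on the $Q$-orthogonal complement $\m = \k^\perp \subset \g$. As in the proofs of Theorems~\ref{thm:rank-8} and~\ref{thm:rank-6}, we reduce to the case where $\g$ is simple by absorbing centralizing factors into the $G'/H'$-quotient data. We are then led to enumerate simple Lie algebras $\g$ containing an $\su(2)$-subalgebra that is compatible with the collar condition and has $\k/\h$ isomorphic to its adjoint representation. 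A case-by-case analysis using the classification of $\su(2)$-subalgebras of simple Lie algebras rules out all candidates except the symmetric pair $(\g_2, \so(4))$ with $\k/\h = \su(2)_3$, giving $M_1$, and the quaternionic pair $(\sp(p+1), \sp(1) \oplus \sp(p))$, giving $M_2$; enumerating the permitted $G'/H'$-quotients completes the list. This case analysis is the principal obstacle of the proof.

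For the existence direction, we observe that both $M_1$ and $M_2$ with $G'=1$, $H' = Sp(p)$ are naturally identified as the adjoint $\R^3$-bundles of principal $Sp(1)$-bundles over quaternion-K\"ahler symmetric spaces: $G_2/SO(4)$ in the case of $M_1$ and $\HP^p$ in the case of $M_2$. For $M_1$, we take the biinvariant metric on $G_2 \times SU(2)$ and perform a Cheeger deformation followed by a diagonal $SO(4)$-quotient, where $SO(4)$ acts on $G_2$ by right-multiplication via $SO(4) \subset G_2$ and on $SU(2)$ through its projection onto the $SU(2)_3$-factor. The resulting $G_2$-invariant metric on $M_1$ is nonnegatively curved, and outside a compact set its principal orbits are normal homogeneous copies of $G_2/H$ because the deformation preserves the biinvariant structure transverse to $G/K$. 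For $M_2$, an analogous Cheeger deformation on $Sp(p+1) \times Sp(1)$ followed by the diagonal $Sp(1) \cdot Sp(p)$-quotient yields the required metric, with normal homogeneous collar $Sp(p+1)/(U(1) \cdot Sp(p))$ endowed with the round metric descended from $S^{4p+3}$.
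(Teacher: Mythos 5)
Your proposal has genuine gaps in both directions. For the necessity direction, you reduce everything to ``a case-by-case analysis using the classification of $\su(2)$-subalgebras of simple Lie algebras'' and explicitly defer it as ``the principal obstacle'' --- but that analysis \emph{is} the proof, and it is not a routine enumeration. The paper first derives the quantitative necessary condition of Theorem~\ref{thm:necessary} ($|X_\m\w Y_\m|\le C|[X,Y]|$ on $\m_1\oplus\s$, for a totally geodesic normal homogeneous principal orbit), then converts it into structural statements about the stabilizer of elements of $\s$ (Propositions~\ref{prop:stabilizer}--\ref{prop:stabilizer3}), obtains a symmetric pair $(\g,\n(\l)\oplus\s_1)$ (Corollary~\ref{cor:symmetric}, Proposition~\ref{prop:N0}), reduces to the irreducible symmetric spaces with spin-type isotropy of Table~2 (Proposition~\ref{prop:symmetric-list}), and then eliminates all entries except 3 and 5 by exhibiting explicit commuting pairs $X,Y$ with linearly independent $\m$-components inside $\su(3)$-subalgebras generated by long roots (Proposition~\ref{prop:list}). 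Even after landing on $G_2$, one still needs Proposition~\ref{prop:G2} --- a limit argument with sequences $X_n,Y_n$ --- to force $SU(2)_1\subset H$ and hence $K=SO(4)$, $H=SU(2)_1\cdot T$; this cannot be extracted from the naive ``$[X,Y]=0$ implies $X_\m,Y_\m$ dependent'' condition (see Remark~\ref{rem:limit}). None of this appears in your sketch. (Also, $\k/\h$ is the $2$-dimensional tangent space of $S^2$; it is $V$, not $\k/\h$, that is $\R^3$ with the adjoint action.)

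For the existence direction your construction is not correct as stated. The diagonal quotient $(G_2\times SU(2))/SO(4)$ you describe is a compact $S^3$-bundle over $G_2/SO(4)$, not the vector bundle $M_1=G_2\times_{SO(4)}\su(2)_3$. More fundamentally, a Cheeger deformation of a biinvariant metric shrinks the metric in the $\k$-directions and does \emph{not} make the principal orbits near infinity normal homogeneous and totally geodesic; producing such a collar is exactly the delicate point of the paper and the source of the rank restriction, so ``the deformation preserves the biinvariant structure transverse to $G/K$'' is not an argument. The paper instead uses the \emph{scaled-up} metric $g_\eps=Q|_\s+(1+\eps)Q|_\k$ on $G$ together with a $K$-invariant metric on $V$ that is cylindrical, $dt^2+c_0^2\,g_Q^{S^l}$, outside a compact set; nonnegativity of the curvature of the relevant planes of $g_\eps$ holds only under condition~(\ref{eq:sufficient}), which for $M_1$ is inherited from the verified triple $SU(2)_1\subset SO(4)\subset G_2$ of \cite{ST1} by enlarging $H$ to $SU(2)_1\cdot S^1$, and for $M_2$ with $G'=1$, $H'=Sp(p)$ follows from the positive curvature of the normal homogeneous metric on $S^{4p+3}$; the collar is then produced by the scale-up/scale-down lemma of \cite{GZ1} with $c_0^2=(1+\eps)/\eps$. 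Your proposal supplies none of these verifications, so both halves of the theorem remain unproved as written.
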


Finally, in the rank four case,  we have the following examples,  which are all related to those in Theorem~\ref{thm:rank-3}.

\begin{thm} \label{thm:rank-4}
The following cohomogeneity one homogeneous vector bundles (orbifold bundles, respectively) of rank four admit $G$-invariant metrics of nonnegative curvature and normal homogeneous collar:
\bi
\item
$G_2 \times_{SO(4)} (\H/\pm 1)$, where $SU(2)_1 \subset SO(4)$ acts trivially and $SU(2)_3 \subset SO(4)$ by left multiplication on $\H/\pm 1$. Note that this is an orbifold bundle only.
\item
$(G_2 \times G')\times_{SO(4) \times SU(2)'} (\H/\pm 1)$, where $SU(2)_1 \subset SO(4)$ acts trivially and $SU(2)_3 \subset SO(4)$ by left multiplication, whereas $SU(2)' \subset G'$ acts by right multiplication on $\H/\pm 1$ with $G'$ arbitrary. Note that these are orbifold bundles only.
\item
$Sp(p+1) \times_{Sp(1) \times Sp(p)} \H$ where $Sp(p)$ acts trivially and $Sp(1)$ by left multiplication on $\H$.
\item
$(Sp(p+1) \times G') \times_{Sp(1) \times Sp(p) \times Sp(1)'} \H$ where $Sp(p)$ acts trivially and $Sp(1)$ by left multiplication, whereas $Sp(1)' \subset G'$ acts by right multiplication on $\H$ with $G'$ arbitrary.
\ei
\end{thm}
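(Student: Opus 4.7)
The plan is to realize each of the four bundles as essentially trivial in the sense of the Introduction, so that Corollary~\ref{cor:almost-trivial} applies.

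For examples 3 and 4, whose fiber is the vector space $\H$, let $L := Sp(1)$ act on $\H$ by right multiplication. This action commutes with the $K$-action (by left multiplication of $Sp(1) \subset K$ and trivially by $Sp(p)$) and is transitive on $S^3$ with trivial isotropy. Hence
\[
(Sp(p+1) \times L) \times_{(Sp(1) \times Sp(p)) \times L} \H
\]
is essentially trivial, and quotienting by the free diagonal $L$-action (right multiplication on the $L$-factor and on $\H$) recovers $Sp(p+1) \times_{Sp(1) \times Sp(p)} \H$, giving example 3. Example 4 is treated identically, enlarging $G$ by the factor $G'$ and $K$ by the subgroup $Sp(1)' \subset G'$, and taking $L$ to be a new copy of $Sp(1)$ acting on $\H$ by right multiplication; the analogous diagonal $L$-reduction recovers example 4. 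Corollary~\ref{cor:almost-trivial} then produces the required metric in both cases.

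For the orbifold examples 1 and 2, with fiber $\H/\pm 1$, the analogous choice $L := SO(3) = Sp(1)/\pm 1$ acts on $\H/\pm 1$ by right multiplication; it commutes with the $K$-action and is transitive on the unit sphere $SO(3)$ of $\H/\pm 1$. We adapt the essentially trivial construction of \cite{ST} to this orbifold setting as follows: the construction chooses a $K \times L$-invariant metric on the disk $D^4 \subset \H$ that is a product near $\partial D^4 = S^3$, and takes the associated submersion metric on $(G \times L) \times_{K \times L} \H$. Since $\pm 1$ acts on $\H$ by $v \mapsto -v$, which is an isometry of any $K \times L$-invariant metric on $D^4$, the constructed metric descends to $D^4/\pm 1$, yielding a $G$-invariant orbifold metric on $(G \times L) \times_{K \times L} (\H/\pm 1)$ of nonnegative curvature with normal homogeneous collar. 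The free diagonal $L$-action reduction as in examples 3 and 4 then produces the metrics on examples 1 and 2.

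The main point requiring care is this last orbifold adaptation: one must verify that the construction of \cite{ST} is $\pm 1$-equivariant (which is automatic from its $K \times L$-equivariance, since $\pm 1 \subset K$) and that the resulting orbifold metric retains nonnegative curvature and the normal homogeneous collar property (both of which follow because $\pm 1$ acts by isometries and the construction is local in the fiber direction). Modulo this verification, the four constructions reduce uniformly to an application of Corollary~\ref{cor:almost-trivial}.
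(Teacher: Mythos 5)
There is a genuine gap, and it concerns precisely the items that distinguish this theorem from a routine application of Corollary~\ref{cor:almost-trivial}. Your device of introducing an auxiliary $L$ ($Sp(1)$, resp.\ $SO(3)$) acting on the fiber by right multiplication does exhibit the \emph{underlying manifolds} of examples 1 and 3 as associated bundles for the enlarged groups $G_2\times L$ and $Sp(p+1)\times L$ (no further ``diagonal quotient'' is needed or even possible dimensionally -- the bundle $(Sp(p+1)\times L)\times_{(Sp(1)\times Sp(p))\times L}\H$ is already $Sp(p+1)$-equivariantly diffeomorphic to $Sp(p+1)\times_{Sp(1)\times Sp(p)}\H$). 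But the metric that the essentially trivial construction produces has a collar which is normal homogeneous only with respect to the \emph{enlarged} group: near infinity the principal orbit carries the metric induced by a bi-invariant metric on $G\times L$ restricted to the complement of the (diagonal-type) isotropy. On $Sp(p+1)/Sp(p)=S^{4p+3}$ this is a Cheeger-type metric in which the $Sp(1)$-fiber directions are scaled by $ab/(a+b)<a$ while the $\H^p$-directions carry the scale $a$; since $Sp(p+1)$ (and likewise $G_2$) is simple, its bi-invariant metric is unique up to scale, so this orbit metric is \emph{never} normal homogeneous for $G=Sp(p+1)$ (resp.\ $G=G_2$) itself. Hence your argument proves only the weaker statements corresponding to items 2 and 4 (with $G'=Sp(1)$, resp.\ $SU(2)$), not items 1 and 3, whose whole content is that the collar is normal homogeneous for the smaller group. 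A related issue: for items 2 and 4 with \emph{arbitrary} $G'$, the subgroup $Sp(1)'\subset G'$ (resp.\ $SU(2)'\subset G'$) need not be a direct factor, so these bundles are not essentially trivial either; they must be obtained from items 3 and 1 via Corollary~\ref{cor:almost-trivial}, which again presupposes that condition (\ref{eq:sufficient}) holds for the small triple.

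The paper's route is exactly this: it verifies hypothesis (\ref{eq:sufficient}) of Theorem~\ref{thm:sufficient} directly for the triples $SU(2)_1\subset SO(4)\subset G_2$ (cited from [STa]) and $Sp(p)\subset Sp(1)\cdot Sp(p)\subset Sp(p+1)$ -- the latter by Berger's theorem, since the normal homogeneous metric on $S^{4p+3}=Sp(p+1)/Sp(p)$ has positive curvature, giving $|[X,Y]|\geq \eps\,|X\wedge Y|\geq \eps\,|X_\m\wedge Y_\m|$ -- and then deduces items 2 and 4 from items 1 and 3 by Corollary~\ref{cor:almost-trivial}. This verification is the actual mathematical content of items 1 and 3 and is absent from your proposal; to repair it you would need to supply such an argument (Berger for the $Sp$ case, and the $G_2$ computation or the reference for $SU(2)_1\subset SO(4)\subset G_2$) rather than route everything through the essentially trivial construction. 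Your orbifold remarks about the fiber metric descending through $v\mapsto -v$ are fine as far as they go, but they do not address this central point.
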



It is a pleasure to thank Karsten Grove and Wolfgang Ziller for many inspiring and clarifying discussions on this work. We are also grateful to the American Institute of Mathematics for hospitality and funding at a workshop on nonnegative curvature in September 2007, where portions of this work were discussed.

\section{Construction of metrics}\label{S:constructions}

In this section, we construct invariant nonnegatively curved metrics with normal homogeneous collars on all of the bundles which are claimed to admit such metrics in Theorems~\ref{thm:rank-8},\ref{thm:rank-6},\ref{thm:rank-3},\ref{thm:rank-4}.

We adopt the following notation for the remainder of the article.  Let $M := G \times_K V\ra G/K$ be a cohomogeneity one homogeneous vector bundle, and let $H \subset K$ denote the isotropy group of the transitive action of $K$ on the unit sphere in $V$.  Let $\h \subset \k \subset \g$ denote the Lie algebras of $H\subset K\subset G$, and let $Q$ be an Ad-invariant inner product on $\g$. We denote the $Q$-orthogonal decompositions by
\be \label{eq:decompose}
\k = \h \oplus \m,\,\,\,\,\,\, \g = \k \oplus \s,\, \mbox{ and we let } \p := \m \oplus \s.
\ee
All of our examples come from:
\begin{thm} \label{thm:sufficient}
If there exists $C > 0$ such that for all $X = X_\m + X_\s, Y = Y_\m + Y_\s \in \p$  we have the inequality
\be \label{eq:sufficient}
|X_\m \w Y_\m| \leq C |[X, Y]|,
\ee
then $M$ admits a nonnegatively curved $G$-invariant metric with  normal homogeneous collar.
\end{thm}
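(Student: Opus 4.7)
The plan is to realize the metric on $M$ as a radially-parametrized family of $G$-invariant metrics on $G/H$ which rescales the $\m$-direction (the $K/H$-fibers of $G/H \ra G/K$) while leaving the $\s$-directions intact. First I would identify $M$ topologically with the quotient of $[0,\infty) \times G/H$ obtained by collapsing the $K/H$-fibers of $G/H \ra G/K$ at $r=0$; this yields both the smooth disk bundle structure over the zero section $G/K$ and a natural cylindrical end $(r_0, \infty) \times G/H$ on which to install the normal homogeneous collar.

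For each $t > 0$ let $g_t$ denote the $G$-invariant metric on $G/H$ whose restriction to $\p = \m \oplus \s$ is $t \cdot Q|_\m + Q|_\s$; in particular $g_1 = Q|_\p$ is the normal homogeneous metric, and as $t \to 0^+$ the $K/H$-fibers collapse to points. The candidate metric is
\[
g_M = dr^2 + g_{\phi(r)},
\]
where $\phi:(0,\infty)\ra(0,\infty)$ is smooth, satisfies $\phi(r) \sim r^2$ as $r \to 0^+$ (so that $g_M$ extends smoothly across the zero section with Euclidean-type normal structure), and $\phi \equiv 1$ for $r \geq r_0$. In the region $r \geq r_0$ the metric is then exactly $dr^2 + Q|_\p$, giving the required normal homogeneous collar with no further gluing.

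Nonnegative sectional curvature is the heart of the argument. I would expand the sectional curvatures of $g_M$ in two pieces: (i) the slicewise curvatures of $(G/H, g_{\phi(r)})$, which via a submersion-type (Eschenburg-style) formula decompose into positive commutator contributions of order $|[X, Y]|^2$ together with potentially negative contributions of order $|X_\m \w Y_\m|^2$ multiplied by functions of $\phi(r)$; and (ii) cross-curvatures involving $\partial_r$, which contribute warped-product terms in $\phi'(r)^2$ and $\phi''(r)$ multiplied by further $|X_\m \w Y_\m|^2$ factors. The hypothesis $|X_\m \w Y_\m| \leq C|[X, Y]|$ is precisely the algebraic condition needed to dominate each of these potentially bad terms by the positive commutator term, uniformly in $r$, provided $\phi$ is chosen with sufficiently tame first and second derivatives.

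The main obstacle will be the explicit curvature calculation: all sectional curvatures of $g_M$ must be written out in the radial variable, the places where the hypothesis is invoked must be identified, and the warping function $\phi$ must be chosen so that the bounds close simultaneously at all $r$, including the nontrivial regime $\phi(r)\in(0,1)$ where the $\m$-fibers are genuinely smaller than normal homogeneous. The inequality of the hypothesis is exactly the algebraic input that lets this balancing succeed.
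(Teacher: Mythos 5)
Your proposal has two genuine gaps, one at the zero section and one at the heart of the curvature argument. First, the ansatz $g_M=dr^2+g_{\phi(r)}$ with $g_t=t\,Q|_\m+Q|_\s$ and $\phi(r)\sim r^2$ does not in general extend smoothly across the singular orbit. Smoothness at $r=0$ forces the collapsing spheres $K/H\cong S^l$ to become round to leading order (the slice metric on $V$ must be Euclidean at the origin), but the metric your family induces on them is a multiple of the normal homogeneous metric $g_Q^{S^l}$, which is not round for most of the transitive actions in question (e.g. $SU(m+1)/SU(m)$, $Sp(m+1)/Sp(m)$, $Spin(9)/Spin(7)$, where $\m$ splits into several $Ad_H$-summands). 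A single scaling function on all of $\m$ therefore cannot work; at the very least one needs different functions on the summands of $\m$ together with the smoothness conditions at $r=0$, none of which your sketch addresses.

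Second, and more seriously, the nonnegativity of curvature in the transition region is exactly the content of the theorem, and your proposal only asserts it: to carry out your plan you would need uniform \emph{lower} bounds on the curvatures of the deformed slice metrics, combined with the negative second-fundamental-form terms of order $\phi'(r)^2|X_\m\wedge Y_\m|^2$, and no such estimate is given. (The curvature formulas the paper develops for metrics of this shape, in Lemma 3.3, are \emph{upper} bounds used for the converse/obstruction direction; they cannot be recycled to prove nonnegativity.) The paper's actual route is different: it takes the metric on $M$ induced by the Riemannian submersion $(G\times V, g_\eps+g_V)\ra M=(G\times V)/K$, where $g_\eps=Q|_\s+(1+\eps)Q|_\k$ scales $\k$ \emph{up} (not down), and $g_V$ is the nonnegatively curved $K$-invariant metric on $V$ from \cite{ST}, arranged to be $dt^2+c_0^2\,g_Q^{S^l}$ outside a compact set. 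The hypothesis (\ref{eq:sufficient}) enters only through the nontrivial result of \cite{ST1} that the $\p$-planes of $(G,g_\eps)$ are nonnegatively curved for small $\eps$; O'Neill then gives nonnegative curvature of $M$, and the normal homogeneous collar comes from the scale-up/scale-down balance $c_0^2=(1+\eps)/\eps$ as in \cite{GZ1}. Your direct warped-family approach would require reproving an analogue of that curvature theorem from scratch for the scaled-down one-parameter family, which is precisely the missing step.
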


\begin{proof} In \cite[Theorem 5.1]{ST}, it was shown that for each linear action of $K$ on $V$ which is transitive on the unit sphere $S^l\subset V$, there exists a nonnegatively curved $K$-invariant metric on $V$ which outside a compact set has the form $g_V = dt^2 + f(t)^2 g_Q^{S^l}$ where $g_Q^{S^l}$ is the normal homogeneous metric on $S^l$ induced by the bi-invariant metric $Q$ on $K$, and $f$ is a smooth function with $f' > 0$. From the curvature formula for warped products it follows that $f'' \leq 0$, and that this metric remains nonnegatively-curved if we replace $f$ by any other function with $f'' \leq 0$. Therefore, we can achieve that $f$ is constant for large $t$. Thus, we may assume that $g_V$ has the form
\[
g_V = dt^2 + c_0^2 g_Q^{S^l}
\]
outside $B_R(0) \subset V$ for some $R > 0$, where $c_0 > 0$ can be chosen to be arbitrarily large.

Next, let $g_\eps$ be the left-invariant metric on $G$ given by
\be \label{eq:geps}
g_\eps = Q|_\s + (1 + \eps) Q|_\k.
\ee
Then $K$ acts by isometries on $(G \times V, g_\eps + g_V)$ as $k\star(g,v):=(gk^{-1},kv)$. There is a unique induced metric on $M$ such that the canonical submersion $G \times V \ra M$ is Riemannian.  This metric on $M$ is invariant under the $G$-action induced from the left action on the first factor of $G\times V$.

Furthermore, it was shown in \cite[Theorem 0.2]{ST1} that under our hypothesis, the tangent planes in $\p = \s \oplus \m \subset \g$ have nonnegative curvature for $(G, g_\eps)$ for sufficiently small $\eps > 0$. Thus, the horizontal planes of the submersion $(G \times V, g_\eps + g_V) \ra M$ all have nonnegative curvature, hence so does the induced metric on $M$ by O'Neill's formula.

If we choose $c_0^2 := (1 + \eps)/\eps$, then it follows from the ``scale-up/scale-down'' metric construction of~\cite[Lemma~2.1]{GZ1} that $M$ has a normal homogeneous collar.
\end{proof}

\begin{cor} \label{cor:almost-trivial} Let $M = G \times_K V \ra G/K$ be a cohomogeneity one homogeneous disc bundle for which (\ref{eq:sufficient}) holds, and let $G'$ be an arbitrary compact Lie group. Let $K' \subset Norm_G(K) \times G'$ be a closed subgroup containing $K \cong K \times \{1\}$, and let $K'$ act orthogonally on $V$ extending the action of $K \subset K'$. Then the bundle
\[
M' := (G \times G') \times_{K'} V = (M \times G')/(K'/K)
\]
admits an invariant metric of nonnegative curvature and normal homogeneous collar. In particular, this shows that every  {\em essentially trivial} homogeneous disk bundles admits such a metric.
\end{cor}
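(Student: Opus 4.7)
The plan is to apply Theorem~\ref{thm:sufficient} to the enlarged bundle $M'$ viewed as $\tilde G \times_{\tilde K} V$ with $\tilde G := G \times G'$ and $\tilde K := K'$, equipped with the Ad-invariant inner product $\tilde Q := Q \oplus Q'$ for any bi-invariant $Q'$ on $\g'$. Since $K \subset K'$ is already transitive on the unit sphere of $V$, so is $K'$, and the framework of Section~\ref{S:constructions} applies; the task reduces to verifying inequality~(\ref{eq:sufficient}) for $(\tilde G, \tilde K, \tilde Q)$, with the $\tilde Q$-orthogonal decompositions $\tilde \k = \tilde \h \oplus \tilde \m$ and $\tilde \g = \tilde \k \oplus \tilde \s$.

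The key structural observation is that because $K' \subset \mathrm{Norm}_G(K) \times G'$, the subalgebra $\k$ is an ideal of $\tilde\k = \k'$: for $Y = (Y_1, Y_2) \in \k'$ and $X \in \k$ one has $[Y, X] = [Y_1, X] \in \k$ since $Y_1$ normalizes $\k$ and $Y_2 \in \g'$ commutes with $\g$. Ad-invariance of $\tilde Q|_{\tilde\k}$ then yields a $\tilde Q$-orthogonal splitting $\tilde\k = \k \oplus \k''$ into commuting ideals. Combined with the fact that $K$ acts transitively on $S(V)$ with isotropy $H$, which forces $\tilde H \cap K = H$ and $\tilde \k = \k + \tilde\h$, one obtains an explicit description of $\tilde\m$ as the graph of a linear map $\phi^* : \m \to \k''$ determined by the infinitesimal $K'$-action on $V$, and a parallel description of $\tilde\s$ in terms of $\s$ and the directions in $\g \oplus \g'$ orthogonal to $\k \oplus \k''$.

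Using these identifications, I would split any wedge $|\tilde X_{\tilde\m} \wedge \tilde Y_{\tilde\m}|$ into ``old'' contributions coming from the $\m$-component, which are bounded by the hypothesis~(\ref{eq:sufficient}) applied to the corresponding elements of $\p$, and ``new'' contributions coming from the $\k''$-component and from the directions in $\g'$ outside the image of $\k''$. For the new terms, the commutation $[\k, \k''] = 0$ constrains the bracket behavior in a way that is controlled directly by Ad-invariance of $\tilde Q$, and the remaining inequality reduces to a positive-sectional-curvature estimate for the normal homogeneous metric on $S(V)$ induced by $\tilde Q|_{\tilde\k}$. I expect this bookkeeping --- disentangling the old and new contributions and identifying the right auxiliary brackets --- to be the main technical obstacle, though no deeper curvature estimate should be required.

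For the final sentence of the corollary, every essentially trivial bundle $(G \times L) \times_{K \times L} V$ arises by applying the main part of the corollary to the degenerate initial bundle $M_0 := L \times_L V \cong V$ (so that the corollary's $G$ and $K$ are both taken to be $L$), with external group equal to the essentially-trivial $G$ and with $K' := L \times K \subset L \times G$ acting on $V$ with trivial $K$-factor. Inequality~(\ref{eq:sufficient}) holds for $M_0$ because $\s = 0$, reducing it to the uniform estimate $|X \wedge Y| \leq C |[X,Y]|$ for $X, Y \in \m$; this in turn is equivalent to the positivity of the sectional curvature of the normal homogeneous metric on $S(V) = L/H_L$, which holds for every classified transitive action of a compact group on a sphere.
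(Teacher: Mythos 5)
Your overall strategy --- reapply Theorem~\ref{thm:sufficient} to $\tilde G = G\times G'$, $\tilde K = K'$ with $\tilde Q = Q\oplus Q'$, after verifying (\ref{eq:sufficient}) for the enlarged triple --- is genuinely different from the paper's proof and can in fact be made to work, but as written your argument stops exactly where the real work begins, and the mechanism you indicate for the ``new'' terms would fail. Positive curvature of the normal homogeneous metric on $S(V)=K'/\tilde H$ only gives $|X_{\tilde\m}\wedge Y_{\tilde\m}|\le C\,|[X_{\tilde\m},Y_{\tilde\m}]|$, and this does not bound $|[X,Y]|$ from below: the cross brackets $[X_{\tilde\m},Y_{\tilde\s}]+[X_{\tilde\s},Y_{\tilde\m}]$ and $[X_{\tilde\s},Y_{\tilde\s}]$ can cancel the contribution of $[X_{\tilde\m},Y_{\tilde\m}]$ --- precisely the cancellation that condition (\ref{eq:sufficient}) is designed to exclude, and which really does occur in the examples of Proposition~\ref{prop:octonions}. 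What actually closes the estimate is different and uses the special structure of $\tilde\h$: writing $\tilde\k=\k\oplus\k''$ as commuting ideals (as you do), one checks that the $\g$-components of elements of $\tilde\m\oplus\tilde\s$ lie in $\m\oplus\s$ (since $\tilde\m\perp\h$ and $\tilde\s\perp\k$, and $\k''$ has $\g$-component in $\s$), that brackets split $\tilde Q$-orthogonally over $\g\oplus\g'$, so $|[X,Y]|\ge|[X^\g,Y^\g]|\ge C^{-1}|X^\g_\m\wedge Y^\g_\m|$ by the \emph{original} hypothesis, and finally --- this is the missing key point --- that the projection $T\colon\tilde\m\to\m$, $Z\mapsto Z^\g_\m$, is injective: for every $0\neq B\in\k''$ transitivity of $K$ on $S(V)$ provides $A\in\k$ with $(A+B)\cdot v_0=0$, hence $A+B\in\tilde\h$ and $\tilde Q(B,A+B)=|B|^2\neq0$, so $\tilde\m\cap\k''=0$. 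Injectivity of $T$ then converts $|X^\g_\m\wedge Y^\g_\m|$ back into $|X_{\tilde\m}\wedge Y_{\tilde\m}|$ with a uniform constant. Without this (or an equivalent) argument, ``the bookkeeping'' you defer is not routine, and the sphere-curvature reduction you propose for it is the wrong tool.

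For comparison, the paper avoids re-verifying (\ref{eq:sufficient}) altogether: it keeps the metric $g_\eps+g'+g_V$ on $G\times G'\times V$ built from the \emph{original} data and enlarges the isometry group, the only new ingredient being that the metric $g_V$ from \cite{ST} is invariant under $Norm_{O(V)}K$ (Schur's Lemma, since the isotropy representation has pairwise inequivalent summands for all sphere-transitive actions except $Sp(p+1)/Sp(p)$, which is checked directly from the construction); nonnegativity of the horizontal curvatures then comes for free from the proof of Theorem~\ref{thm:sufficient}, and $M'$ is the Riemannian quotient. Your final paragraph on essentially trivial bundles --- reducing to $L\times_L V$, where $\s=0$ and (\ref{eq:sufficient}) is exactly Berger's positive-curvature statement for the normal homogeneous metric on $S(V)=L/H$ --- matches the paper's own treatment and is fine.
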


\begin{proof}
We assert that the $K$-invariant metric on $V$ from \cite[Theorem~5.1]{ST} used in the proof of Theorem~\ref{thm:sufficient} is invariant not only under $K$ but under $Norm_{O(V)}K$. This is due to the fact that for all but one type of transitive actions on spheres (\cite{Montgomery}; cf Table~1), the isotropy representation splits into mutually inequivalent irreducible representations, so that any $K$-invariant metric on $V$ is $Norm_{O(V)}(K)$-invariant by Schur's Lemma. The only exception is $S^{4p+3} = Sp(p+1)/Sp(p)$ in which case the isotropy has a $3$-dimensional trivial summand, and $Norm_{O(4p+4)}Sp(p) = Sp(1) \cdot Sp(p)$. However, looking at the construction in \cite{ST}, it follows that this metric is $Norm_{O(4p+4)}Sp(p)$-invariant as well which shows the assertion.

Consider $(G \times G' \times V, g_\eps + g' + g_V)$ with $g_\eps$ and $g_V$ defined in the proof of Theorem~\ref{thm:sufficient} and $g'$ on $G'$ biinvariant. Since $K \subset K'$ is a {\em normal} subgroup, our hypothesis implies that $K'$ acts on $V$ by elements of $Norm_{O(V)}K$ and hence by isometries of $g_V$. Also, $g_\eps + g'$ is invariant under right multiplication  of $G \times G'$ by $K' \subset Norm_G(K) \times G'$. Since $M' = (G \times G') \times_{K'} V$ is the corresponding quotient, and (\ref{eq:sufficient}) guarantees that the horizontal planes of the submersion $(G \times G' \times V, g_\eps + g' + g_V) \ra M'$ have nonnegative curvature, it follows that the induced metric on $M'$ has the asserted properties.

In order to show that this applies to essentially trivial bundles , it remains to verify that $G \times V = (G \times L) \times_L V$ satisfies the hypothesis (\ref{eq:sufficient}) if $L \subset O(V)$ acts transitively on the unit sphere $S^l \subset V$. Indeed, the normal homogeneous metric on $S^l = L/H$ has positive curvature by \cite{Ber}. Thus,
\[
|X_\m \w Y_\m| \leq C |[X_\m, Y_\m]| = C |[X, Y]_\l| \leq C |[X, Y]|,
\]
where $\frac14 C^{-1} > 0$ is a lower curvature bound for $S^l$.
\end{proof}

It remains to prove, for each of the bundles which are claimed to admit such metrics in Theorems~\ref{thm:rank-8},\ref{thm:rank-6},\ref{thm:rank-3},\ref{thm:rank-4}, that either hypothesis (\ref{eq:sufficient}) is satisfied or Corollary~\ref{cor:almost-trivial} applies. The first three examples of Theorem~\ref{thm:rank-8} and the first example of Theorem~\ref{thm:rank-6} correspond to the following triples $H \subset K\subset G$:
\begin{enumerate}
\item $Spin_\pm(7) \subset Spin(8) \subset Spin(p+9)$ for $p \in \{0, 1, 2\}$, where the first inclusion is by the spinor representation (cf. section~\ref {sec:octonions} for details), and the second is the lift of the standard embedding $SO(8) \subset SO(p+9)$,
\item $G_2\subset Spin(7) \subset Spin(p+8)$ for $p \in \{0, 1\}$, where the second inclusion is the lift of the standard embedding $SO(7) \subset SO(p+8)$,
\item $SU(3)\subset SU(4)\cong Spin(6)\subset Spin(7)$,
\item $Spin(5) \cong Sp(2) \subset Spin(6) \cong SU(4) \subset SU(5)$.
\end{enumerate}
That these triples satisfy hypothesis (\ref{eq:sufficient}) was proven in~\cite{ST1}.  In all cases, this was achieved by verifying that
\[
[\s,\s]\cap[\m,\m]=\{0\},
\]
which implies the hypothesis.

The remaining examples of Theorems~\ref{thm:rank-8} and~\ref{thm:rank-6} now all are obtained by applying Corollary~\ref{cor:almost-trivial} to the above examples.

Likewise, the second and fourth examples in Theorem~\ref{thm:rank-4} follow by applying Corollary~\ref{cor:almost-trivial} to the first and third examples, respectively. The latter correspond to the triples:
\begin{enumerate}
\item $SU(2)_1\subset SO(4)\subset G_2$,
\item $Sp(p) \subset Sp(1)\cdot Sp(p) \subset Sp(p+1)$.
\end{enumerate}
The first triple was verified in~\cite{ST1}.  The second triple satisfies the hypothesis because $Sp(p+1)/Sp(p) = S^{4p+3}$ is a sphere whose normal homogeneous metric has positive curvature with lower bound, say, $4 \eps$. Then it follows that $|[X, Y]| \geq \eps |X \w Y|$ for all $X, Y \in \m \oplus \s$, and since $|X \w Y| \geq |X_\m \w Y_\m|$, the hypothesis of Theorem~\ref{thm:sufficient} is satisfied for $C := \eps^{-1}$.

The examples  in Theorem~\ref{thm:rank-3} correspond to the triples:
\begin{enumerate}
\item $SU(2)_1 \cdot S^1 \subset SO(4) \subset G_2$,
\item $S^1 \cdot Sp(p) \subset Sp(1)\cdot Sp(p) \subset Sp(p+1)$,
\end{enumerate}
each of which is formed from a rank 4 example by enlarging $H$; this change obviously maintains condition (\ref{eq:sufficient}).

\section{Necessary conditions for normal homogeneous collars} \label{sec:curvature}

For the remainder of this paper, we assume that $M= G \times_K V$ is endowed with a $G$-invariant nonnegatively curved metric such that there exists a principal $G$-orbit in $M$ which is totally geodesic and normal homogeneous, i.e., it is induced by an Ad-invariant inner product $Q$ on $\g$. This is slightly weaker than assuming that $M$ has a normal homogeneous collar, but it will imply the same rigidity.

As in (\ref{eq:decompose}), we have the $Q$-orthogonal decomposition $\g = \h\oplus\p = \h \oplus \m \oplus \s$. The goal of this section is to prove:

\begin{thm} \label{thm:necessary}
If $\m_1 \subset \m$ is any non-trivial $Ad_H$-irreducible subspace such that $\m$ contains no irreducible factor equivalent to $\m_1$, then there exists a constant $C > 0$ such that for all $X = X_\m + X_\s, Y = Y_\m + Y_\s \in \m_1 \oplus \s$ we have:
\be \label{eq:condition}
|X_\m \w Y_\m| \leq C |[X, Y]|.
\ee
\end{thm}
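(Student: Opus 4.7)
My plan is to apply Jacobi-field analysis at the given totally geodesic normal homogeneous principal orbit $N = G/H$, combined with Schur's lemma via the hypothesis that no $\mathrm{Ad}_H$-summand of $\m$ is equivalent to $\m_1$.

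Let $\gamma$ be a unit-speed geodesic in $M$ meeting $N$ orthogonally at $p = \gamma(t_0)$, with $\nu := \gamma'(t_0)$. For each $Z \in \p$, the restriction to $\gamma$ of the fundamental Killing field $Z^*$ is a Jacobi field $J_Z$ along $\gamma$ with $J_Z(t_0) = Z^*(p)$. Since $N$ is totally geodesic, the shape operator of $N$ at $p$ vanishes, forcing $J_Z'(t_0) = 0$. Differentiating the orbit-metric expression $g_t(Z,W) = \langle J_Z, J_W\rangle_{\gamma(t)}$ twice at $t_0$ and using the Jacobi equation produces
\[
\ddot g(Z,W) := \left.\frac{d^2}{dt^2} g_t(Z,W)\right|_{t_0} = -2\, R^M(Z^*, \nu, \nu, W^*)|_p,
\]
which is an $\mathrm{Ad}_H$-invariant symmetric bilinear form on $\p$ (because each $g_t$ is $G$-invariant) and negative semidefinite (because $M$ has nonnegative sectional curvature). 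Since $\m_1 \subset \m$ is $\mathrm{Ad}_H$-irreducible with no equivalent irreducible factor elsewhere in $\m$, Schur's lemma forces $\ddot g|_{\m_1 \times \m_1} = c\,Q|_{\m_1 \times \m_1}$ for some $c \leq 0$, and $\ddot g(Z, W) = 0$ whenever $Z \in \m_1$ and $W$ lies in any $\mathrm{Ad}_H$-summand of $\m$ inequivalent to $\m_1$.

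By a standard homogeneity-and-compactness reduction, inequality (\ref{eq:condition}) amounts to the algebraic statement that, for $X, Y \in \m_1 \oplus \s$, the vanishing $[X, Y] = 0$ implies $X_\m \wedge Y_\m = 0$. Suppose this fails for some $X = X_\m + X_\s$ and $Y = Y_\m + Y_\s$ with $X_\m, Y_\m$ linearly independent in $\m_1$. Since $N$ is totally geodesic, the Gauss equation combined with the normal homogeneous curvature formula yields
\[
K^M(X^*, Y^*)|_p = K^N(X, Y) = \frac{1}{4}|[X, Y]_\p|^2 + |[X, Y]_\h|^2 = 0,
\]
so the 2-plane $\sigma_0 := \mathrm{span}(X^*, Y^*)$ is flat in $M$ at $p$. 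Because the normal homogeneous metric on the sphere $K/H$ is strictly positively curved, $[X_\m, Y_\m] \neq 0$ in $\k$, and the $\k$-part of $[X, Y] = 0$ forces the compensating identity $[X_\s, Y_\s]_\k = -[X_\m, Y_\m]$.

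The final step, which I expect to be the main technical obstacle, is to derive a contradiction from the flatness of $\sigma_0$ and the cohomogeneity one vector bundle structure of $M \to G/K$. At $p = [e,v] \in N$ the normal $\nu$ is identified with the radial direction $v/|v|$ in the fiber $V$, while $\m$ acts tangentially on the sphere of radius $|v|$ in $V$. Applying the second-order nonnegativity of $K^M$ around $\sigma_0$ (i.e.\ positivity of the Hessian of $K^M$ on the Grassmannian at this flat plane) in directions that mix $\p$ with $\nu$, combined with the Schur normalization of $\ddot g$, should produce a quadratic algebraic identity linking $c$ to the compensating bracket $[X_\s, Y_\s]_{\m_1}$. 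The no-equivalent-factor hypothesis cuts off compensating coupling contributions from inequivalent $\m$-summands, and the resulting identity then forces $[X_\m, Y_\m]_{\m_1} = 0$, contradicting strict positivity of normal homogeneous curvature on $K/H$.
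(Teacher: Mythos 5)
Your reduction step is the first genuine gap: you claim that by ``homogeneity-and-compactness'' inequality (\ref{eq:condition}) is equivalent to the algebraic statement that $[X,Y]=0$ forces $X_\m\w Y_\m=0$ for $X,Y\in\m_1\oplus\s$. This is false. The ratio $\rho(X,Y)=|[X,Y]|/|X_\m\w Y_\m|$ is only defined on the non-compact open set where $X_\m\w Y_\m\neq 0$, so its infimum over the unit spheres need not be attained: one can have sequences with $[X_n,Y_n]\ra 0$ and $X_{n,\m}\w Y_{n,\m}\ra 0$ simultaneously, with $\rho\ra 0$, while no genuinely commuting pair has independent $\m$-components. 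The paper makes exactly this point in Remark~\ref{rem:limit} (and the sequence $X_n,Y_n$ constructed in the proof of Proposition~\ref{prop:G2} is an instance): condition (\ref{eq:condition}) cannot be weakened to the commuting-pair statement. Consequently your whole strategy of producing a single flat plane $\sigma_0=\mathrm{span}(X^*,Y^*)$ and analyzing the curvature tensor to second order around it does not address the statement actually to be proved.

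The second, deeper problem is that no analysis carried out purely at the totally geodesic principal orbit (your $\ddot g$ computation, Schur normalization, and ``second-order nonnegativity of $K^M$ on the Grassmannian'') can suffice, because a Riemannian product collar $dt^2+g_Q$ over $G/H$ is nonnegatively curved for \emph{any} $H\subset K\subset G$ and satisfies every such pointwise condition; the obstruction encoded in (\ref{eq:condition}) only appears when one insists that the metric extend over the singular orbit, where the $\m$-directions must collapse. This is precisely how the paper argues: writing $\phi_t|_{\m_1}=(1-h(t))^{-1}\,Id$ by Schur's Lemma and $\phi_t|_\s=Id$ (Lemma~\ref{lem:soul}, the part of your sketch that is sound), the curvature estimate of Lemma~\ref{lem:technical} together with the warped-product second fundamental form shows that if $\inf\rho=0$ then nonnegative curvature forces the differential inequality $h'(t)^2\le 4\la_3(|\psi_t|)h(t)^2$ along the entire collar; a logarithmic-derivative argument then gives $h\equiv 0$, i.e.\ $\phi_t|_{\m_1}\equiv Id$, which contradicts $\lim_{t\ra 0}|X_\m|_{c(t)}=0$ at the soul. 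Your proposal never uses this global propagation along the geodesic nor the degeneration at the singular orbit, and its final step is only conjectural (``should produce a quadratic algebraic identity''), so as it stands it does not prove the theorem.
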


When $K/H$ is isotropy irreducible, the choice $\m_1=\m$ yields a converse to Theorem~\ref{thm:sufficient}. Evidently, (\ref{eq:condition}) implies that for $X, Y \in \m_1 \oplus \s$, we can have $[X, Y] = 0$ only if $X_\m, Y_\m$ are linearly dependent. However, the converse implication is false in general; cf. Remark~\ref{rem:limit}.

Towards proving this theorem, first notice that the disk bundle $D \subset M$ which is bounded by the totally geodesic principal orbit is totally convex and thus has the singular orbit $\Sigma\subset D$ as its soul, since $\Sigma$ is a closed submanifold equidistant from the boundary.  We can choose a point $p\in\Sigma$ and a unit-speed normal geodesic $c:[0,l]\ra M$ with $c(0)=p$ and $c(l)\in\partial D$ such that $K$ is the stabilizer of $p$ and $H$ is the stabilizer of $c(t)$ for all $t\in(0,l]$.  For each $t\in[0,l]$, there is a self-adjoint map $\varphi_t:\p\ra\p$ such that
$$\lb X^*,Y^*\rb_{c(t)} = Q(X,\varphi_t Y)$$
for all $X,Y\in\p$, where $X^*,Y^*$ denotes the action fields of $X,Y$.  Each $\varphi_t$ is positive definite, with the exception that $\varphi_0|_\m = 0$.  Thus, $D\backslash\Sigma$ is $G$-equivariantly isometric to the warped product $((0,l]\times(G/H),dt^2 + g_{\varphi_t})$, where $\{g_{\varphi_t}\}$ is the family of homogeneous metrics on $G/H$ determined by $\{\varphi_t\}$.  By assumption, $\varphi_l=Id$.

\begin{lem} \label{lem:soul}
For each $t\in[0,l]$, $\varphi_t|_{\s} = Id$.  In particular, the soul $\Sigma$ is normal homogeneous.
\end{lem}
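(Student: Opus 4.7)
The plan is to combine boundary rigidity (from the totally geodesic, normal homogeneous assumption on $\partial D$) with Perelman's theorem on the Sharafutdinov retraction, in order to derive matching upper and lower bounds for $\varphi_t|_\s$ and propagate equality throughout $[0,l]$.

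First I would establish that $\dot\varphi_l=0$: the gradient $\tilde N$ of the function $q\mapsto d(q,\Sigma)$ is $G$-invariant, so $[X^*,\tilde N]=0$ for every $X\in\g$, giving $\nabla_{c'}X^*|_{c(l)}=\nabla_{\tilde N}X^*=\nabla_{X^*}\tilde N$; this equals $-\mathcal{S}_l(X^*)$, the shape operator of $\partial D$ applied to $X^*$, and hence vanishes by the totally geodesic hypothesis. Via the Killing identity $\frac{d}{dt}\langle X^*,Y^*\rangle_{c(t)}=2\langle\nabla_{c'}X^*,Y^*\rangle$ this yields $\dot\varphi_l=0$, and differentiating once more using the Jacobi equation $\nabla_{c'}\nabla_{c'}X^*=-R(X^*,c')c'$ gives $\ddot\varphi_l\le 0$ in operator form (by nonnegative curvature, noting that $X^*\perp c'$ along $c$ for every $X\in\p$ since this inner product is constant by the Killing equation and vanishes at $t=0$).

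Next, for $X\in\s$ I would invoke Perelman's theorem that $\pi\colon M\to\Sigma$ is a $C^\infty$ Riemannian submersion with totally geodesic fibers, so $|X^*(q)|_M\ge |X^*(\pi(q))|_\Sigma$ with equality iff $X^*(q)$ is $\pi$-horizontal. At $q=c(l)$ the vertical space $V_{c(l)}=T_{c(l)}\pi^{-1}(p)=\R c'(l)\oplus\m^*$ (with $\m^*:=\{Y^*(c(l)):Y\in\m\}$) is $Q$-orthogonal to $\s^*$: orthogonality to $c'(l)$ is by the above, and orthogonality to $\m^*$ follows from normal homogeneity of $\partial D$ ($\langle X^*,Y^*\rangle_{c(l)}=Q(X,Y)=0$ for $X\in\s$, $Y\in\m$). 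Thus $H_{c(l)}=\s^*$, so $X^*(c(l))\in\s^*$ is horizontal and $|X^*(p)|_\Sigma=|X^*(c(l))|_M=\sqrt{Q(X,X)}$. This is precisely $\varphi_0|_\s=Id$ (the asserted normal homogeneity of the soul), and applying the Perelman inequality pointwise along $c$ gives the operator lower bound $\varphi_t|_\s\ge Id$ for all $t$.

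Combining these, the function $h(t):=|X^*(c(t))|_M^2-Q(X,X)$ is nonnegative on $[0,l]$ with $h(0)=h(l)=0$ and $h'(l)=0$; the boundary minimum at $t=l$ forces $h''(l)\ge 0$, while $\ddot\varphi_l\le 0$ forces $h''(l)\le 0$, so $h''(l)=0$ and in particular $K(X^*,c')|_{c(l)}=0$ for every $X\in\s$. The main obstacle is to propagate this equality from a neighborhood of $t=l$ to all of $[0,l]$; I would use the second-variation identity $\int_0^l(|\nabla_{c'}X^*|^2-\langle R(X^*,c')c',X^*\rangle)\,dt=0$, coming from the variation $c_s=\exp(sX^*)c$ (a family of minimizing normal geodesics of common length $l$ whose endpoint terms vanish by the total geodesic-ness of $\Sigma$ and $\partial D$), combined with the Riccati equation $\dot{\mathcal S}+\mathcal S^2=-R_{c'}$ and the Perelman lower bound, to conclude via unique continuation that $\varphi_t|_\s=Id$ throughout.
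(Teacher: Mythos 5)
Your first two steps are essentially sound: the boundary computation giving $\dot\varphi_l=0$ (equivalently $\nabla_{c'}X^*|_{c(l)}=0$ for all $X\in\p$), and the use of the Sharafutdinov retraction to get $|X^*(p)|_\Sigma=\sqrt{Q(X,X)}$ for $X\in\s$ and hence $h(0)=h(l)=0$, $h\ge 0$ (though note that Perelman's theorem asserts a Riemannian submersion, not one with totally geodesic fibers, and your identification of the vertical space at $c(l)$ silently uses $\pi(c(t))\equiv p$, i.e.\ that $\pi\circ\exp^\perp$ is the base-point projection). The genuine gap is in the final step, which you yourself flag as ``the main obstacle'': the second-variation identity $\int_0^l\bigl(|\nabla_{c'}X^*|^2-\langle R(X^*,c')c',X^*\rangle\bigr)\,dt=0$ has no rigidity content. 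Since $X^*|_c$ is a Jacobi field, that integral equals $\langle\nabla_{c'}X^*,X^*\rangle\big|_0^l$, which vanishes for the reasons you give; so the identity merely says that two nonnegative integrals are equal, and it does not force $\nabla_{c'}X^*\equiv 0$. Similarly, ``unique continuation'' has no purchase here: you only establish $h(l)=h'(l)=h''(l)=0$ at the single point $t=l$ (not equality on a neighborhood), and nothing in a nonnegatively curved metric propagates such pointwise vanishing along $c$. So the conclusion $\varphi_t|_\s=Id$ on all of $[0,l]$ is not actually proved.

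The missing idea can be supplied in two ways. The paper's route is to use the stronger (flat-strip) content of Perelman's theorem: every $\overline X_0\in T_p\Sigma$ extends to a \emph{parallel} Jacobi field along $c$ ending tangent to $\partial D$; since $\partial D$ is totally geodesic, this field and the action-field Jacobi field $X^*|_c$ with the same value at $t=l$ both have vanishing derivative there, hence coincide by uniqueness of Jacobi fields, so for $X\in\s$ the field $X^*|_c$ is parallel and $\langle X^*,Y^*\rangle_{c(t)}=Q(X,Y)$ for all $Y\in\p$, which is exactly $\varphi_t|_\s=Id$. Alternatively, your own ingredients close the gap via monotonicity rather than an integral identity: writing $\nabla_{c'}X^*=\mathcal S_t(X^*)$ with $\mathcal S_t=\nabla N$ the shape operator of the principal orbits, the Riccati equation $\mathcal S_t'=-\mathcal S_t^2-R_{c'}\le 0$ together with $\mathcal S_l=0$ gives $\mathcal S_t\ge 0$ for $t<l$, hence $t\mapsto|X^*(c(t))|^2$ is non-decreasing; combined with $h(0)=h(l)=0$ this forces $h\equiv 0$, whence $\mathcal S_tX^*=0$ and $X^*|_c$ is parallel, and the cross terms $\langle X^*,Y^*\rangle$ are then constant as well. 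Either argument completes the proof; the step as you wrote it does not.
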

\begin{proof}
For each $X\in\p$, the action field $X^*$ on $M$ is Killing, so its restriction to the geodesic $c$ is a Jacobi field, which we denote as $X_t$.  Thus, $X\mapsto X_t$ is an identification of $\p$ with a family of Jacobi fields along $c$.  This family has the property that
$$\lb X_t,Y'_t\rb = \lb X'_t,Y_t\rb \textrm{ for all }X,Y\in\p,$$ because these derivatives are determined by the second fundamental forms of the principle orbits.  For any $X\in\p$ such that $X_t$ is parallel, we must have that $X\in\s$.  This is because, for all $Y\in\m$, we know that $Y_0=0$ and
$$\frac{d}{dt}\lb X_t,Y_t\rb = \lb X_t',Y_t\rb + \lb X_t,Y_t'\rb = 2\lb X'_t,Y_t\rb = 0,$$
thus, $0=\lb X_l,Y_l\rb = Q(X,\varphi_l Y) = Q(X,Y)$.

Conversely, we wish to show that each element of $\s$ determines a parallel Jacobi field along $c$, which will complete the proof.  By Perelman's Theorem~\cite{Per}, each $\overline{X}_0\in T_p\Sigma$ extends to a parallel Jacobi field, $\overline{X}_t$, along $c$, with $\overline{X}_l$ tangent to $\partial D$.  There exists some $X\in\p$ with $X_l=\overline{X}_l$.  Since $\partial D$ is totally geodesic, we also have $X'(l) = 0 = \overline{X}'_l$, so the two Jacobi fiels must agree: $X_t=\overline{X}_t$ for all $t$.  Since $X_t$ is parallel, we know from above that $X\in\s$, and in fact $X$ is the unique vector in $\s$ identified with $\overline{X}_0$ via the identification $\s\cong T_p\Sigma$ given by action fields.  Thus, for all $X\in\s$, $X_t$ is a parallel Jacobi field.
\end{proof}

Before we prove Theorem~\ref{thm:necessary}, we need the following

\begin{lem} \label{lem:technical}
Let $H \subset K \subset G$ be as above, and let $\m_1 \subset \m$ be a linear subspace. Then there are polynomials $\la_i$ for $i = 1,2,3$ with the following property. If $\phi: \p \ra \p$ is a positive definite $Ad_H$-equivariant map with $\phi|_\s = Id$ and $\phi|_{\m_1} = (1 - h)^{-1} Id$ for some $h \in (-\infty, 1)$, and if we let $\psi := Id - \phi^{-1}$, then for all $X = X_\m + X_\s, Y = Y_\m + Y_\s \in \m_1 \oplus \s$ we have
\[
k^{\phi}(X', Y') \leq \la_1(|\psi|) |[X, Y]|^2 + \la_2(|\psi|) |[X, Y]| \cdot |X_\m \w Y_\m| + \la_3(|\psi|) h^2 |X_\m \w Y_\m|^2,
\]
where $X' := \phi^{-1} X$, $Y' := \phi^{-1} Y$ and $k^{\phi}$ denotes the unnormalized curvature of $(G/H, g_\phi)$.
\end{lem}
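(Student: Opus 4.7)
The plan is to derive an explicit formula for the unnormalized sectional curvature $k^\phi(X', Y')$ starting from a standard expression for left-invariant metrics on $G/H$ of the form $g_\phi(X, Y) = Q(\phi X, Y)$, and then expand systematically in the deformation endomorphism $\psi := Id - \phi^{-1}$. The natural starting point is a Püttmann-type formula writing $k^\phi(X', Y')$ as a polynomial in the brackets $[X', Y']_\p$, $[X', [X', Y']_\p]_\p$, and $[Y', [X', Y']_\p]_\p$, together with the endomorphism $\phi$ and its inverse. Every occurrence of $\phi^{-1}$ is then replaced by $Id - \psi$, organising the result as a polynomial in $\psi$ whose coefficients are brackets of $X'$ and $Y'$.

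The next step is to substitute $X' = (Id - \psi) X$ and $Y' = (Id - \psi) Y$. The hypothesis $\psi|_\s = 0$ together with $\psi|_{\m_1} = h \cdot Id$ and $X, Y \in \m_1 \oplus \s$ forces $\psi X = h X_\m$ and $\psi Y = h Y_\m$, so every bracket involving $\psi$ collapses, e.g.\ $[\psi X, \psi Y] = h^2 [X_\m, Y_\m]$ and $[\psi X, Y] = h [X_\m, Y]$. I would then rewrite every remaining bracket of projection components as a combination of $[X, Y]$ and $[X_\m, Y_\m]$, using the identity $[X_\m, Y_\s] + [X_\s, Y_\m] = [X,Y] - [X_\m, Y_\m] - [X_\s, Y_\s]$ and an analogous bound on $[X_\s, Y_\s]$. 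Crucially, since the Lie bracket is alternating, $|[X_\m, Y_\m]| \leq C_0 |X_\m \w Y_\m|$ for a structural constant $C_0$ depending only on $\g$.

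With these substitutions the expansion becomes a finite sum of terms of schematic form $p(\psi) \, h^a \, |[X,Y]|^b \, |X_\m \w Y_\m|^c$ with $b + c = 2$. Terms with $a = 0$ contribute to the $\la_1(|\psi|) |[X,Y]|^2$ summand; cross terms with $b = c = 1$ always carry at least one factor of $h$, which together with the polynomial coefficient is absorbed into $\la_2(|\psi|)$ using the estimate $|h| \leq |\psi|$; and terms with $c = 2$ automatically carry an $h^2$ factor produced by the two collapsed $\psi$-brackets, yielding the $\la_3(|\psi|) h^2 |X_\m \w Y_\m|^2$ summand. Any excess powers of $h$ beyond what is displayed are dominated by matching powers of $|\psi|$, and the operator norms of $\phi$ and $\phi^{-1}$ appearing in the coefficients are polynomial in $|\psi|$ after clearing bounded denominators, so the coefficients assemble into the claimed $\la_i(|\psi|)$.

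The main obstacle is the bookkeeping together with one subtler structural point. Every cross term linear in $\psi$ must pair against either $[X, Y]$ or $X_\m \w Y_\m$ rather than produce an uncontrolled mixed norm such as $|X_\m| |Y_\s|$ in isolation, which requires invoking the rewriting identity for $[X_\m, Y_\s] + [X_\s, Y_\m]$ at every occurrence. More delicately, the quadratic-in-$h$ contributions coming from the symmetric part $U^\phi$ of the Levi-Civita connection (which enters squared in the curvature formula) must multiply $|X_\m \w Y_\m|^2$ rather than the strictly larger $|X_\m|^2 |Y_\m|^2$; this is handled by the standard algebraic identity $|U^\phi(X,Y)|^2 - \langle U^\phi(X,X), U^\phi(Y,Y)\rangle_\phi \leq C |X \w Y|^2$ applied to the collapsed $\m_1$-components, with $C$ depending only on structural constants of $\g$.
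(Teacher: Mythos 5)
Your overall strategy is the same as the paper's: start from a P\"uttmann-type formula for $k^\phi$ (the paper quotes it ready-made from \cite[Proposition~2.4]{ST1}), use the collapse $\psi X = hX_\m$, $\psi Y = hY_\m$, bound $|[X_\m,Y_\m]|\le \la\,|X_\m\w Y_\m|$ by the norm of the bracket on $\Lambda^2\m_1$, and absorb $|h|\le|\psi|$ into polynomial coefficients. However, the entire content of the lemma is the bookkeeping you defer, and the two mechanisms you propose for the dangerous terms would not deliver the stated inequality. Rewriting $[X_\m,Y_\s]+[X_\s,Y_\m]=[X,Y]-[X_\m,Y_\m]-[X_\s,Y_\s]$ leaves behind $[X_\s,Y_\s]$, and there is no ``analogous bound'' for it: $|[X_\s,Y_\s]|$ (equivalently $|X_\s\w Y_\s|$ or $|X\w Y|$) is \emph{not} dominated by $|[X,Y]|$ and $|X_\m\w Y_\m|$ --- e.g.\ with $Y_\m=0$ one can have $[X_\m,Y_\s]$ large while cancellation against $[X_\s,Y_\s]$ makes $[X,Y]$ small, so a stray term like $h^2|[X_\m,Y_\s]+[X_\s,Y_\m]|^2$ cannot be absorbed into $\la_1N_1^2+\la_2N_1N_2+\la_3h^2N_2^2$. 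For the same reason your Gauss-type estimate ``$\,\le C|X\w Y|^2$'' is too weak: $|X\w Y|^2$ is not one of the admissible right-hand terms, and the assertion that the $h$-quadratic contributions involve only the $\m_1$-components of $X,Y$ is precisely what has to be proved.

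What saves the estimate in the paper is the specific structure of the formula in \cite{ST1}: the mixed brackets occur only inside $A=[\psi X,Y]+[X,\psi Y]$, $B=[\psi X,\psi Y]$ and $C=[\psi X,Y]-[X,\psi Y]$; one has $A^\k=2h[X_\m,Y_\m]$ and $B=h^2[X_\m,Y_\m]\in\k$, while $C$ and $[\psi X,X]$ lie in $\s$, so $\psi C=0$ and $\psi[\psi X,X]=0$; and every occurrence of $A$ in the formula is through $A^\h$, $\psi A$, $\psi^2A$, or paired against an element of $\k$, so only the controlled component $2h[X_\m,Y_\m]$ ever enters, while the genuinely uncontrollable pieces sit in manifestly nonpositive terms ($-\frac34|D^\p|^2_{g_1}$ and the first term of $\gamma$) that are simply dropped. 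Unless you reproduce the formula and verify this term-by-term --- showing that no term pairs the $\s$-part of $A$, or $C$, or $[X_\s,Y_\s]$ against anything that survives --- your expansion has a genuine gap at its central step, even though the route you sketch is the one the paper actually follows.
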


\begin{proof}
For the proof, we use the description of the unnormalized curvature established in \cite[Proposition~2.4]{ST1}. There, $k(t)$ denotes the curvature of the metric $\phi_t = (Id - t \psi)^{-1}$. We need this formula for $t = 1$ which reads
\be \label{eq:unnormalized}
k^{\phi}(X', Y') = \al + \beta + \gamma + \delta - \frac 34 |D^\p|^2_{g_1} \leq \al + \beta + \gamma + \delta,
\ee
where
\begin{eqnarray*}
\alpha & = & |[X,Y]^\h|^2 + \frac 14|[X,Y]^\p|^2\\
\beta & = & -\frac 34\lb \psi[X,Y],[X,Y]\rb - \frac 32\lb[X,Y]^\h,A\rb\\
\gamma & = & -\frac 34|\psi[X,Y]|^2 + \frac 32\lb\psi[X,Y],A\rb -\frac 32\lb[X,Y]^\m,B\rb +\frac 34|A^\h|^2 \\
\delta & = &  -\frac 34\lb \psi^3 [X,Y],[X,Y]\rb+\frac 32 \lb \psi^2 [X,Y],A \rb
              -\frac 32 \lb \psi [X,Y],B\rb \\
       &&     -\frac 34 \lb \psi A,A\rb-\frac 14 \lb \psi C, C\rb+\lb \psi[\psi X,X], [\psi Y,Y]\rb+\lb A,B\rb -\frac 32\lb A^\h,B\rb
\end{eqnarray*}
and
\begin{eqnarray*}
A & = & [\psi X,Y]+[X,\psi Y] = h (2 [X_\m, Y_\m] + [X_\m, Y_\s] + [X_\s, Y_\m]),\\
B & = & [\psi X,\psi Y] = h^2 [X_\m, Y_\m] \in \k,\\
C & = & [\psi X,Y]-[X,\psi Y] = h ([X_\m, Y_\s] - [X_\s, Y_\m]) \in \s.
\end{eqnarray*}
Let us label the two norms
\[
N_1 := |[X, Y]| \mbox{ and } N_2 := |X_\m \w Y_\m|.
\]
There is a constant $\la > 0$ such that for all $X_\m, Y_\m \in \m_1$ we have
\[
|[X_\m, Y_\m]| \leq \la N_2
\]
where $\la$ is the norm of the linear map $[\ ,\ ]: \Lambda^2 \m_1 \ra \k$. Thus,
\[ \ba{rll}
|A^\h| \leq |A^\k| & = & 2 |h|\ |[X_\m, Y_\m]| \leq 2 \la |h| N_2,\\
|B| & \leq & \la h^2 N_2.
\ea \]
Moreover, since $C \in \s$ we have $\psi C = 0$, and $[\psi X, X] = h [X_\m, X_\s] \in \s$ so that $\psi[\psi X, X] = 0$. Thus, using in addition that $|h| \leq |\psi|$, we obtain the following estimates.
\begin{eqnarray*}
\alpha & \leq & N_1^2\\ \\
\beta & \leq & \frac34 |\psi| N_1^2 + \frac32 N_1 |A^\h| \leq \frac34 |\psi| N_1^2 + 3 \la |h| N_1 N_2
\leq 
\frac34 |\psi| N_1^2 + 3 \la |\psi| N_1 N_2 \\ \\
\gamma & \leq & \frac 32 N_1\ |\psi A^\k| + \frac 32 N_1\ |B| +\frac 34|A^\h|^2\\
& \leq & 3 \la |h| |\psi| N_1 N_2 + \frac 32 \la h^2 N_1 N_2 + 3 \la^2 h^2 N_2^2 \\
& \leq & \frac 92 \la |\psi|^2 N_1 N_2 + 3 \la^2 h^2  N_2^2\\ \\
\delta & \leq & \frac 34 |\psi|^3 N_1^2 + \frac 32 N_1\ |\psi^2 A^\k| + \frac 32 |\psi| N_1\ |B| + \frac 34 |\psi A^\k|\ |A^\k| + \frac 52 |A^\k|\ |B|\\
& \leq & \frac 34 |\psi|^3 N_1^2 + (3 \la |h| |\psi|^2 + \frac 32 \la h^2 |\psi|) N_1 N_2\\ &&
 + (3 \la^2 h^2 |\psi| + 5 \la^2 |h|^3) N_2^2\\
& \leq & \frac 34 |\psi|^3 N_1^2 + \frac 92 \la |\psi|^3 N_1 N_2 + 8 \la^2 h^2 |\psi| N_2^2
\end{eqnarray*}
In the estimate of $\gamma$, we dropped the first term as it is nonpositive. Substituting all of this into (\ref{eq:unnormalized}) we obtain
\begin{eqnarray*}
k^{\phi}(X', Y') & \leq & (1 + \frac34 |\psi| + \frac 34 |\psi|^3) N_1^2 + (3 \la |\psi| + \frac 92 \la |\psi|^2 + \frac 92 \la |\psi|^3) N_1 N_2\\ & & + (3 \la^2 + 8 \la^2 |\psi|) h^2 N_2^2,
\end{eqnarray*}
which shows the claim.
\end{proof}

We are now ready to prove Theorem~\ref{thm:necessary}.

\begin{proof}
Let $\m_1\subset\m$ be as in the theorem. By Schur's Lemma, $\phi_t|_{\m_1}= f(t) \cdot Id$ for some smooth function $f:(0,l]\ra\R^+$, and we let $\psi_t := Id - \phi_t^{-1}$. Thus, by Lemma~\ref{lem:soul}, we have $\phi_t|_\s = Id$ and $\phi_t|_{\m_1} = (1 - h(t))^{-1} Id$ for $h(t) := 1 - f(t)^{-1}$. Moreover, $\psi_l = 0$ since $t = l$ corresponds to the normal homogeneous principal orbit. Identifying $M\backslash\Sigma\cong((0,l]\times(G/H),dt^2+ g_{\phi_t})$, the unnormalized curvature $k^M$ of the tangent plane in $M$ at $c(t)$ spanned by the action fields of $X'$ and $Y'$ equals:
\[
k^M(X',Y') =  k^{\phi_t}(X', Y') + II(X',Y')^2 - II(X',X')\ II(Y', Y').
\]
The second fundamental form of this warped product metric satisfies:
\[
II(X', Y') = \frac12 Q(X', \dot \phi_t Y') = \frac12 h'(t) Q(X_\m, Y_\m).
\]
Thus, by Lemma~\ref{lem:technical},
\begin{eqnarray*}
k^M(X',Y') & =  & k^{\phi_t}(X', Y') -  \frac14 h'(t)^2 |X_\m \w Y_\m|^2\\Ê\\
& \leq & \la_1(|\psi_t|) |[X, Y]|^2 + \la_2(|\psi_t|) |[X, Y]| \cdot |X_\m \w Y_\m|\\ && + \la_3(|\psi_t|) h(t)^2 |X_\m \w Y_\m|^2 -  \frac14 h'(t)^2 |X_\m \w Y_\m|^2.
\end{eqnarray*}
Since we assume that the metric on $M$ has nonnegative curvature, this implies for all $X = X_\m + X_\s$, $Y = Y_\m + Y_\s \in \m_1 \oplus \s$ with $X_\m \w Y_\m \neq 0$
\be \label{eq:estimate1}
\ba{ll}
0 \leq \frac{k^M(X',Y')}{|X_\m \w Y_\m|^2} \leq & \la_1(|\psi_t|) \rho(X, Y)^2 + \la_2(|\psi_t|) \rho(X, Y)\\ & + \la_3(|\psi_t|) h(t)^2 -  \frac14 h'(t)^2,
\ea
\ee
where $\rho(X, Y) := \frac{|[X, Y]|}{|X_\m \w Y_\m|}$. Suppose the theorem is false, which means that
\be \label{eq:infimum}
\inf\left\{ \rho(X, Y) \mid X = X_\m + X_\s, Y = Y_\m + Y_\s \in \m_1 \oplus \s, X_\m \w Y_\m \neq 0 \right\} = 0.
\ee
Since (\ref{eq:estimate1}) must hold for {\em all} $X, Y \in \m_1 \oplus \s$  with $X_\m \w Y_\m \neq 0$, (\ref{eq:infimum}) implies
\be \label{eq:estimate2}
0 \leq  \la_3(|\psi_t|) h(t)^2 -  \frac14 h'(t)^2.
\ee
 Suppose that there is a $t_0 \in (0, l]$ such that $h(t_0) \neq 0$, and let $t_1 := \min\{t \in (t_0, l] \mid h(t) = 0\}$ which exists as $h(l) = 0$. Since $\la_3$ is continuous, it follows that $\la_3(|\psi_t|) \leq C$ for all $t \in [t_0, t_1]$ and some constant $C > 0$, and hence by (\ref{eq:estimate2}),
\[
\left( \log(h(t))' \right)^2 = \frac{h'(t)^2}{h(t)^2} \leq 4 \la_3(|\psi_t|) \leq 4 C
\]
 for all $t \in [t_0, t_1)$, i.e., $\log(h(t))$ has bounded derivative for all such $t$. On the other hand, $\lim_{t \nearrow t_1} \log h(t) = -\infty$ which is impossible.

This shows that we must have $h(t) \equiv 0$ and hence $f(t) \equiv 1$ on $(0, l]$. Note that for all $X \in \m$ we have $\lim_{t \ra 0} (X_\m)_{c(t)} = 0$. Thus, for $X_\m \in \m_1$ we have $0 = \lim_{t \ra 0} g_{c(t)}(X_\m, X_\m) = \lim_{t \ra 0} Q(X_\m, \phi_t X_\m) = |X_\m|_Q^2$ which is a contradiction. 
\end{proof}

\section{Octonions and Triality} \label{sec:octonions}

In this section, we will collect some facts about the octonion numbers and triality which are well known (for a survey, see e.g. \cite{Baez}), and show that certain triples $H \subset K \subset G$ do not satisfy condition (\ref{eq:condition}) from Theorem~\ref{thm:necessary}. 

Let $\O \cong \R^8$ denote the octonian numbers, and let $G_2$ be the automorphism group of $\O$. Since $G_2$ stabilizes $1 \in \O$ and is orthogonal, it leaves $Im(\O) \cong \R^7$ invariant. We consider the $Ad_{G_2}$-invariant decomposition
\be \label{eq:spin(8)}
\so(8) = \so(\O) = \g_2 \oplus \{ L_q \mid q \in Im(\O) \} \oplus \{ R_q \mid q \in Im(\O) \} := \g_2 \oplus V_L \oplus V_R,
\ee
where $L_q, R_q: \O \ra \O$ denote multiplication from the left and the right, respectively. Indeed, $V_L, V_R$ both are $Ad_{G_2}$-invariant and not equal so that their intersection must vanish. Then a dimension count shows that the three summands on the right of (\ref{eq:spin(8)}) span all of $\so(8)$. Note, however, that $V_L$ and $V_R$ are not orthogonal; in fact, these spaces intersect at an angle of $\pi/3$.

Let $Spin_0(7) \subset Spin(8)$ be the subgroup obtained by the lift of the inclusion $SO(7) \subset SO(8)$ of endomorphisms stabilizing $1 \in \O$. Then all elements in its Lie algebra $\so_0(7) \subset \so(8)$ vanish on $1$, hence we obtain the orthogonal decomposition
\[
\so(8) = \underbrace{\g_2 \oplus \m_0}_{= \so_0(7)} \oplus \s_0, \mbox{ where } 
\ba{rl}
\m_0 & = \{ L_q - R_q \mid q \in Im(\O) \}, \mbox{ and }\\  \s_0 & = \{ L_q + R_q \mid q \in Im(\O) \}.
\ea
\]
As a $G_2$-module, the decomposition (\ref{eq:spin(8)}) can be written as
\[
\so(8) = \g_2 \oplus \left(\R^2 \ot (Im(\O))\right),
\]
hence any automorphism of $Spin(8)$ which leaves $G_2$ invariant determines an element of $O(2)$ acting on the $\R^2$-factor.

The {\em triality group} is defined as the group of outer automorphisms of $Spin(8)$. This group is isomorphic to the permutation group $S_3$. Moreover, each outer automorphism can be uniquely represented such that it acts on $G_2 \subset Spin(8)$ as the identity. Thus, by the preceding paragraph, the triality automorphisms induce a faithful homomorphism $S_3 \ra O(2)$ and hence, $S_3$ acts on $\R^2$ as the isometry group of an equilateral triangle. Therefore, the orbit of $Spin_0(7) \subset Spin(8)$ under the triality group consists of three subgroups
\[
Spin_0(7), Spin_\pm(7) \hookrightarrow Spin(8),
\]
and the Lie algebras $\so_\pm(7) \subset \so(8)$ of the latter induce the orthogonal decompositions
\[
\so(8) = \underbrace{\g_2 \oplus \m_\pm}_{= \so_\pm(7)} \oplus \s_\pm, \mbox{ where } 
\ba{rlllrl}
\m_+ & = \{ L_q + 2 R_q \mid q \in Im(\O) \}, & \mbox{ and } & \s_+ & = V_L,\\
\m_- & = \{ 2 L_q + R_q \mid q \in Im(\O) \}, & \mbox{ and } & \s_- & = V_R.
\ea
\]

Let $Spin(8) \subset Spin(9)$ be the lift of the inclusion $SO(8) \subset SO(9)$. Then $Spin_0(7) \subset Spin(8) \subset Spin(9)$ is the lift of the inclusion $SO(7) \subset SO(9)$, whereas $Spin_\pm(7) \subset Spin(8) \subset Spin(9)$ are conjugate subgroups which are the isotropy of $S^{15} = Spin(9)/Spin(7)$.

\begin{prop} \label{prop:octonions}
For the following triples $H \subset K \subset G$, condition (\ref{eq:condition}) of Theorem~\ref{thm:necessary} with $\m_1 = \m$ is violated.
\bi
\item
$G_2 \subset Spin_\pm(7) \subset Spin(p + 9)$ for $p \geq 0$,
\item
$G_2 \subset Spin_0(7) \subset Spin(p + 9)$ for $p \geq 1$,
\item
$G_2 \subset Spin_0(7) \subset F_4$,
\item
$Spin_\pm(7) \subset Spin(8) \subset Spin(p + 9)$ for $p \geq 3$,
\ei
where the second inclusions are given by the composition of the inclusion $Spin_i(7) \subset Spin(9)$ from above with the lift of the inclusion $SO(9) \subset SO(p + 9)$ in the first tow cases and with the isotropy of the Cayley plane $F_4/Spin(9)$ in the third case, and by the lift of $SO(8) \subset SO(p + 9)$ in the last case.
\end{prop}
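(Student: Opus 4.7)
The plan is to prove, for each of the four listed triples, the existence of $X, Y \in \m \oplus \s$ with $X_\m \w Y_\m \neq 0$ and $[X, Y] = 0$, directly violating (\ref{eq:condition}) with $\m_1 = \m$.

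For cases (1)--(3), I would exploit the triality permutation of the three $Spin(7)$-subgroups of $Spin(8)$ described in the preceding paragraphs. For case (1) with $H = G_2$, $K = Spin_+(7)$, and $G = Spin(p+9)$, the triality-conjugate Lie subalgebra $\so_-(7) \subset \so(8) \subset \g$ contains $\h = \g_2$, and its complement $\m_- = \{2L_q + R_q : q \in Im(\O)\}$ lies inside $\m \oplus \s$. Using the decomposition $2L_q + R_q = \tfrac{1}{2}(L_q + 2R_q) + \tfrac{3}{2} L_q$, with $\m$-part in $\m_+$ and $\s$-part in $V_L = \s_+$, I would introduce an independent scaling $s$ between these two parts and define $X(s) = \tfrac{1+3s}{2} L_p + R_p$ and $Y(s) = \tfrac{1+3s}{2} L_q + R_q$. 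The bracket $[X(s), Y(s)]$ becomes a quadratic polynomial in $s$ with operator coefficients $[L_p, L_q]$, $[L_p, R_q]$ (which equals $[R_p, L_q]$ by the alternating property of the octonion associator), and $[R_p, R_q]$; I would identify a value $s = s^*$ and suitable $p, q \in Im(\O)$ at which this quadratic vanishes identically while $X_\m \w Y_\m \neq 0$. Cases (2) and (3) proceed in parallel fashion using the remaining triality-conjugate embeddings: in (2), interchanging the roles of $Spin_0(7)$ and $Spin_\pm(7)$; in (3), using the subalgebra $\spin(9) \subset F_4$ and the Cayley-plane structure of $F_4 / Spin(9)$ to produce the analogous triality configuration.

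For case (4), the irreducible $Spin(7)$-module $\m \cong V^7$ has no equivalent summand in $\s = \so(p+1) \oplus (p+1) \cdot S$, so the triality-based construction is unavailable. Instead, I would pick generic $X_\m, Y_\m \in \m$ and solve $[X, Y] = 0$ component-wise in the summands $\h$, $\m$, $\so(p+1)$, and $V \otimes \R^{p+1}$ of $\so(p+9)$, parameterizing $X_\s = u_1 \otimes r_1 + u_2 \otimes r_2$ and $Y_\s = u_1' \otimes r_1 + u_2' \otimes r_2$ in terms of octonions $u_i, u_i' \in V$ and orthonormal $r_1, r_2 \in \R^{p+1}$. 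The resulting system involves the $Spin(7)$-equivariant projection $\pi : \Lambda^2 S \to V^7$ (which in the octonion picture is $a \w b \mapsto Im(a \bar b)$) together with linear matching conditions on the $u_i, u_i'$; the threshold $p \geq 3$ should emerge from a parameter count requiring that $\R^{p+1}$ provide enough independent directions to accommodate the bracket cancellation.

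The main obstacle is the explicit octonion identity verification. In cases (1)--(3), the coefficients $[L_p, L_q]$, $[L_p, R_q]$, $[R_p, R_q]$ span the $3$-dimensional space of $G_2$-equivariant maps $\Lambda^2 Im(\O) \to \so(8)$, and identifying the balancing scalar $s^*$ together with compatible $p, q$ requires a specific algebraic relation among them, which must be extracted from the triality-related structure on $\g_2 \oplus V_L \oplus V_R$. In case (4), the delicate point is to verify that the apparently overdetermined bracket cancellation system does admit a solution with $X_\m \w Y_\m \neq 0$ precisely when $p \geq 3$, which depends on the precise interplay between the spin-representation structure on $V$ and the auxiliary $\R^{p+1}$ directions.
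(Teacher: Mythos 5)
Your overall strategy -- exhibiting commuting $X,Y\in\m\oplus\s$ with $X_\m\w Y_\m\neq 0$ -- is exactly the paper's, but the concrete constructions you propose do not work. For cases (1)--(3) your candidates $X(s)=\tfrac{1+3s}{2}L_p+R_p$, $Y(s)=\tfrac{1+3s}{2}L_q+R_q$ lie entirely inside $\so(8)$, i.e.\ in $\m_\pm\oplus\s_\pm$. But the triple $G_2\subset Spin(7)\subset Spin(8)$ satisfies the \emph{stronger} condition (\ref{eq:sufficient}) (via $[\s,\s]\cap[\m,\m]=0$, as recalled in Section~2 from [STa]), and since triality fixes $G_2$ while permuting the three $Spin(7)$'s, the same holds for $G_2\subset Spin_\pm(7)\subset Spin(8)$. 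Consequently any commuting pair in $\g_2^\perp\cap\so(8)$ has linearly dependent $\m$-components, so no value $s^*$ and no independent $p,q\in Im(\O)$ with $[X(s^*),Y(s^*)]=0$ exist; the ``specific algebraic relation'' you hope to extract is provably absent. The violation genuinely requires the part of $\s$ outside $\so(8)$: in the paper, case (1) uses one direction $e_4\in\R^{p+1}$ (taking $X=E_{12}+E_{34}$, $Y=E_{13}+E_{24}$ with $2E_{12}=L_i+R_i$, $2E_{13}=L_j+R_j$), case (2) uses two directions $e_5,e_6\in\R^{p+1}$ together with $L_i$ and $R_j$, and case (3) is reduced to case (1) by locating $\spin_\pm(9)\subset{\frak f}_4$ so that $Spin_0(7)\subset Spin(8)\subset Spin_\pm(9)$ becomes the isotropy inclusion of $S^{15}$. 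This is also why the thresholds on $p$ differ between cases (1), (2) and (4) -- a phenomenon an $\so(8)$-internal triality construction cannot detect at all, since it never sees $p$.

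For case (4) your ansatz is too small: you allow only two auxiliary directions $r_1,r_2\in\R^{p+1}$, which corresponds to $p\geq 1$. But for $p\in\{0,1,2\}$ the triple $Spin_\pm(7)\subset Spin(8)\subset Spin(p+9)$ is among those satisfying (\ref{eq:sufficient}) (Section~2, first family of Theorem~\ref{thm:rank-8}), so the bracket-cancellation system with two extra directions has no solution with $X_\m\w Y_\m\neq 0$, and a parameter count based on it would mislead you. The paper's construction needs \emph{four} orthonormal vectors $e_9,\dots,e_{12}\in\R^{p+1}$: with $\m=V_L$ one takes $X=L_i+\sqrt2\sum_{r=1}^4 E_{2r,8+r}$ and $Y=L_j+\sqrt2\sum_{r=1}^4 E_{2r-1,8+r}$, using $[L_i,L_j]=2(E_{12}+E_{34}+E_{56}+E_{78})$ and $L_ie_{2r-1}=L_je_{2r}$ to cancel the bracket; this is precisely where the hypothesis $p\geq 3$ enters. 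So while the high-level plan is right, the proposal as written has genuine gaps in all four cases and would need to be rebuilt around elements that use the $\O\ot\R^{p+1}$ (respectively spin-module) summand of $\s$.
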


\begin{proof}
Decompose $\R^{p + 9} = \O \oplus \R^{p+1}$. In the first two cases, we have the splitting
\be \label{eq:decompose-so(8+p)}
\so(p + 9) = \overbrace{\underbrace{\g_2}_{= \l} \oplus \underbrace{\m_i}_{=\m} \oplus \underbrace{\s_i}_{\subset \s}}^{= \so(8)} \oplus \underbrace{\so(p+1)}_{= \z(\l)} \oplus \underbrace{\O \ot \R^{p+1}}_{\subset \s},
\ee
where $Spin(8) \times Spin(p+1)$ acts on $\O \ot \R^{p+1}$ by the tensor representation of $SO(8) \times SO(p+1)$.

To show the assertion for the first triple, pick the orthonormal set $e_1 = 1, e_2 = i, e_3 = j \in \O$ and $e_4 \in \R^{p+1}$. Moreover, let $E_{rs} = -E_{sr} \in \so(8 + p)$ denote the rank two matrix with $E_{rs}e_r = e_s$ and $E_{rs}e_s = -e_r$, and define
\[
X := E_{12} + E_{34}, \mbox{ and } Y := E_{13} + E_{24}.
\]
Then $[X, Y] = 0$, and $E_{24}, E_{34} \in \O \ot \R^{p+1} \subset \s$. Moreover, $2 E_{12} = L_i + R_i$, $2 E_{13} = L_j + R_j \in \s_0$, so that $X, Y \in \g_2^\perp$. If the $\m_\pm$-components of $X$ and $Y$ were linearly dependent, then the span of $E_{12}, E_{13} \in \s_0$ would intersect $\s_\pm$ which is impossible. Thus, $X_{\m_\pm}, Y_{\m_\pm}$ are linearly independent which shows that (\ref{eq:condition}) does not hold.

For the second triple, we write $\O = \H \oplus \eps \H$ where $\H$ denote the quaternions. Pick an orthonormal basis $e_1, \ldots, e_4$ of $\eps \H$ by
\[
e_1 := \eps, e_2 := \eps k, e_3 := \eps i, e_4 := \eps j,
\]
and let $e_5, e_6 \in \R^{p+1}$ be othonormal. Let $L_i, R_j \in \so(7)$ be defined as above. Then $[L_i, R_j] \in \so(7)$ is the associator, whence $[L_i, R_j]|_\H = 0$ whereas $[L_i, R_j]|_{\eps \cdot \H} = 2 L_k|_{\eps \cdot \H}$. Thus, in the matrix notation from above, we may write w.r.t. the above basis
\[
[L_i, R_j] = - 2 (E_{12} + E_{34}).
\]
Moreover, $L_i e_2 = R_j e_1$ and $L_ i e_4 = R_j e_3$ so that $[L_i, E_{2r}] = [R_j, E_{1r}]$ and $[L_i, E_{4r}] = [R_j, E_{3r}]$ for $r = 5,6$. Thus, if we define
\[
X := L_i + \sqrt 2 (E_{15} + E_{36}) \mbox{ and } Y := R_j + \sqrt 2 (E_{25} + E_{46}) \in \m \oplus \s,
\]
then one verifies from here that $[X, Y] = 0$. But from (\ref{eq:decompose-so(8+p)}) it follows that $X_\m = (L_i)_\m = \frac12 (L_i - R_i)$ and $Y_\m = (R_j)_\m = -\frac12 (L_j - R_j)$, i.e., $X_\m, Y_\m$ are linearly independent  which shows that (\ref{eq:condition}) does not hold.

Now let us show that the third triple does not satisfy (\ref{eq:condition}). The Lie algebra ${\frak f}_4$ of $F_4$ can be decomposed as
\[
{\frak f}_4 = \so(8) \oplus V_8 \oplus \Delta_8^+ \oplus \Delta_8^-,
\]
where $V_8$ is the standard and $\Delta_8^\pm$ are the spin representations of $\so(8)$. Moreover, $\so(8) \oplus V_8 := \so_0(9) \subset {\frak f}_4$ and $\so(8) \oplus \Delta_8^\pm =: \so_\pm(9) \subset {\frak f}_4$ are Lie subalgebras corresponding to conjugate Lie subgroups $Spin_i(9) \subset F_4$ with $i \in \{0, \pm\}$ whose intersection is $Spin(8)$.

In particular, the inclusions $Spin_0(7) \subset Spin(8) \subset Spin_\pm(9)$ correspond to the isotropy inclusion of $S^{15} = Spin(9)/Spin(7)$ and hence, the triples $G_2 \subset Spin_0(7) \subset Spin_\pm(9)$ do {\em not} satisfy (\ref{eq:condition}) by the first case considered above, and hence, when enlarging $Spin_\pm(9)$ to $F_4$, (\ref{eq:condition}) remains violated.

Finally, for the last example, it suffices to consider $Spin_+(7) \subset Spin(8) \subset Spin(p + 9)$, so that $\m = \so(8) \cap \so_+(7)^\perp = \s_+ = V_L$. We consider an orthonormal set of $\R^{p + 9} = \O \oplus \R^{p+1}$ as
\[
e_1 := 1, e_2 := k, e_3 := i, e_4 := j, \mbox{ and } e_{4+r} = \eps e_r \mbox{ for } r = 1, \ldots, 4, \mbox{ and } e_9, \ldots, e_{12} \in \R^{p+1}.
\]

Note that $[L_i, L_j]|_\H = 2 L_k$, whereas $[L_i, L_j]|_{\H \eps} = -2 L_k$. Thus, in the notation from above, we have $[L_i, L_j] = 2 (E_{12} + E_{34} + E_{56} + E_{78})$. Moreover, $L_i e_{2r - 1} = L_j e_{2r}$ so that $[L_i, E_{2r - 1, s}] = [L_j, E_{2r, s}]$ for $r = 1, \ldots, 4$ and $s \geq 9$. Thus, for the elements $X, Y \in \m \oplus \s$ given as
\[
X = L_i + \sqrt 2 \sum_{r=1}^4 E_{2r, 8 + r} \mbox{ and } Y = L_j + \sqrt 2  \sum_{r=1}^4 E_{2r-1, 8 + r},
\]
one verifies that $[X, Y] = 0$ whereas $X_\m = L_i$ and $Y_\m = L_j$ are linearly independent which contradicts (\ref{eq:condition}).
\end{proof}

\section{Bundles with normal homogeneous collar} \label{sec:restriction}

In this final section, we derive consequences of Theorem~\ref{thm:necessary}, and give a partial classification of the triples $H \subset K \subset G$ for which condition (\ref{eq:condition}) can hold.

We let $\k_0 := \lb \m \rb \lhd \k$ be the ideal generated by $\m$ and let $\h_0 := \h \cap \k_0$, so that $K_0/H_0$ is an almost effective sphere where $H_0 \subset H$ and $K_0 \subset K$ are the connected normal subgroups with Lie algebra $\h_0$ and $\k_0$, respectively. If we let $\h' \lhd \h$ be the ineffective kernel of this action, then we obtain the $Q$-orthogonal decompositions

\be \label{eq:splitting-m}
\h = \h' \oplus \h_0,\mbox{\hspace{1cm}}Ê\k_0 = \h_0 \oplus \m,\mbox{\hspace{1cm}} \k = \h' \oplus \k_0. 
\ee

The almost effective transitive actions on spheres have been classified (\cite{Montgomery}). For each of these actions, we pick a subgroup $H_1 \subset K_0$ which contains $H_0$. Namely, for the homogeneous sphere $S^{15} = Spin(9)/Spin(7)$, we let $H_1 := Spin(8)$, whereas in all other cases, we let $H_1 := (Norm_{K_0} H_0)_0$ be the identity component of the normailzer of $H_0 \subset K_0$. We denote the corresponding $Q$-orthogonal decomposition by

\be \label{eq:splitting-m1}
\k_0 = \h_1 \oplus \m_1.
\ee

From the classification in \cite{Montgomery} it follows that either $H_0 = 1$ and $H_1 = K_0$, so that $\m_1 = 0$, which happens only for $S^1 = U(1)/1$ or $S^3 = SU(2)/1$, or the representation of $H_1$ on $\m_1 := \h_1^\perp$ is irreducible and there is no irreducible $Ad_H$-module in $\m$ which is equivalent to $\m_1 \subset \m$, so that we can choose this particular space for condition (\ref{eq:condition}) in Theorem~\ref{thm:necessary}.

We shall from now on assume that $\m_1 \neq 0$, thus assuming that $\dim H_0 > 0$ and hence, $\dim (K/H) \geq 2$. Moreover, we let $\l := \lb \m_1 \rb \subset \k_0$ be the Lie algebra generated by $\m_1$, and let $L \subset K_0$ be the corresponding connected subgroup. In Table~1, we now list all these groups which follow from the classification in \cite{Montgomery}.

\begin{table}
{\footnotesize
\begin{center}
\begin{tabular}{|c|c|c|c|c|c|c|}
\multicolumn{7}{c}{{\bf Table~1}}\\
\multicolumn{7}{c}{{\sc Almost effective transitive actions of connected Lie groups on spheres}}\\
\multicolumn{7}{c}{{\sc $S^n = K_0/H_0$ with $\dim H_0 >  0$}}
\vspace{2 mm}\\
\hline
&&&&&&\vspace{-3mm}\\
& $\dim S^n$ & $K_0$ & $H_0$ & $H_1$ & $\m_1$ & $L$\\
&&&&&&\vspace{-3mm}\\
\hline
&&&&&&\vspace{-3mm}\\
1 & $n \geq 2$ & $\ba{c}SO(n+1) \\ Spin(n+1)\ea$ & $\ba{c} SO(n) \\ Spin(n)\ea$ & $\ba{c} SO(n) \\ Spin(n)\ea$ & $\R^n$ & $\ba{c} SO(n+1) \\ Spin(n+1)\ea$ \\
&&&&&&\vspace{-3mm}\\
\hline
&&&&&&\vspace{-3mm}\\
2 & $\ba{c}2m+1\\ m \geq 2\ea$ & $\ba{c}T \cdot SU(m+1), \\ T \subset S^1 \mbox{ connected}\ea$ &  $T \cdot SU(m)$ & $T  \cdot U(m)$ & $\C^m$ & $SU(m+1)$\\
&&&&&&\vspace{-3mm} \\
\hline
&&&&&&\vspace{-3mm}\\
3 & $\ba{c}4m+3\\m \geq 1\ea$ & $\ba{c}T \cdot Sp(m+1), \\ T \subset Sp(1) \mbox{ connected}\ea$ & $T \cdot Sp(m)$ & $T \cdot Sp(1) \cdot Sp(m)$ & $\H^m$ & $Sp(m + 1)$\\
&&&&&&\vspace{-3mm}\\
\hline
&&&&&&\vspace{-3mm}\\
7 & $6$ & $G_2$ & $SU(3)$ & $SU(3)$ & $\C^3$ & $G_2$\\
&&&&&&\vspace{-3mm}\\
\hline
&&&&&&\vspace{-3mm}\\
8 & $7$ & $Spin(7)$ & $G_2$ & $G_2$ & $\R^7$ & $Spin(7)$\\
&&&&&&\vspace{-3mm}\\
\hline
&&&&&&\vspace{-3mm}\\
9 & $15$ & $Spin(9)$ & $Spin(7)$ & $Spin(8)$ & $\R^8$ & $Spin(9)$\\
\hline
\multicolumn{7}{l}{{The representations of $H_1$ on $\m_1$ are the standard irreducible representations in each case.}}
\end{tabular}
\end{center}}
\end{table}

\begin{prop} \label{prop:stabilizer}
Let $H \subset K \subset G$ and $\h \subset \k \subset \g$ and $\m_1 \subset \m$ be as above such that (\ref{eq:condition}) is satisfied. Suppose there are elements $0 \neq X_\m \in \m_1$ and $0 \neq Y_\s \in \s$ such that $[X_\m, Y_\s] = 0$. Let $N \subset K_0$ be the identity component of the centralizer of $Y_s$.
Then $N \subset K_0$ acts transitively on $K_0/H_1$.
\end{prop}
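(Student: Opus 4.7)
The first step is to reformulate transitivity. At the Lie algebra level, $N$ acts transitively on $K_0/H_1$ if and only if $\n + \h_1 = \k_0$, where $\n = \ker(\mathrm{ad}\,Y_\s)\cap\k_0$ is the Lie algebra of $N$. Using the $Q$-orthogonal splitting $\k_0 = \h_1 \oplus \m_1$ from (\ref{eq:splitting-m1}), this is in turn equivalent to the inclusion
\[
[\m_1, Y_\s] \subseteq [\h_1, Y_\s]
\]
of subspaces of $\s$. Equivalently, the $H_1$-orbit and the $K_0$-orbit of $Y_\s$ in $\s$ coincide. I plan to argue the contrapositive: assuming this inclusion fails, exhibit $X, Y \in \m_1\oplus\s$ violating (\ref{eq:condition}), which contradicts our standing hypothesis.

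Suppose some $Z \in \m_1$ satisfies $[Z, Y_\s] \notin [\h_1, Y_\s]$. After subtracting a suitable multiple of $X_\m$---which contributes zero to $[\cdot, Y_\s]$ by hypothesis, so the failure of the inclusion is preserved---I may take $Z$ to be $Q$-orthogonal to $X_\m$, so that $X_\m \w Z \neq 0$. The goal is then to construct $X = X_\m + W_1$ and $Y = Z + W_2$ with $W_1, W_2 \in \s$ satisfying $[X, Y] = 0$; such a pair would have $X_\m \w Y_\m = X_\m \w Z \neq 0$, directly contradicting (\ref{eq:condition}).

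Expanding $[X, Y]$ and separating its $\k$- and $\s$-components yields the system
\[
[X_\m, Z] + [W_1, W_2]_\k = 0, \qquad [X_\m, W_2] + [W_1, Z] + [W_1, W_2]_\s = 0.
\]
A natural ansatz is $W_1 = Y_\s$; using $[X_\m, Y_\s] = 0$ together with the Jacobi identity, the system reduces to equations whose consistent solvability in $W_2 \in \s$ is controlled precisely by whether $[Z, Y_\s]$ lies in $[\h_1, Y_\s]$. The failure of the inclusion then produces a violation of (\ref{eq:condition})---either directly or, if exact vanishing of $[X,Y]$ cannot be achieved, via a scaling limit $W_i \mapsto t W_i$ with $t \to \infty$ that pushes the residual bracket term to zero while keeping $X_\m \w Y_\m$ bounded below.

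The main obstacle I anticipate is the careful book-keeping of the $\k$- and $\s$-components in the scaling limit, and showing that the candidate $W_2$ can always be chosen inside $\s$ rather than merely in $\g$. The necessary flexibility comes from the $K$-module structure of $\s$ together with the fact that $Y_\s$ already admits the commuting element $X_\m \in \m_1$, and the $H_0$-irreducibility of $\m_1$ recorded in Table~1 should allow one to transport the single identity $[X_\m, Y_\s] = 0$ into uniform statements about generic elements of $\m_1$ by averaging over the $Ad_{H_0}$-orbit of $X_\m$.
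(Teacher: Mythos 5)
Your opening reduction is fine and agrees with the paper's first step: $N$ acts transitively on $K_0/H_1$ iff the projection of $\n$ to $\m_1$ is onto, i.e. $[\m_1,Y_\s]\subseteq[\h_1,Y_\s]$ (the paper phrases this as $\n^\perp\cap\m_1=0$ with $\n^\perp=[Y_\s,\g]+\s+\h'$). The gap is everything after that: you never actually produce vectors violating (\ref{eq:condition}), and both routes you sketch are defective. First, an exact commuting pair $X=X_\m+W_1$, $Y=Z+W_2$ with $X_\m\w Z\neq 0$ need not exist even when (\ref{eq:condition}) fails --- this is precisely the content of Remark~\ref{rem:limit} --- so your claim that ``consistent solvability in $W_2$ is controlled precisely by whether $[Z,Y_\s]$ lies in $[\h_1,Y_\s]$'' is unjustified and cannot be true in the generality you need; concretely, with your ansatz $W_1=Y_\s$ the $\k$-component equation $[Y_\s,W_2]_\k=-[X_\m,Z]$ is already an independent obstruction that has nothing to do with your hypothesis on $Z$. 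Second, the fallback scaling $W_i\mapsto tW_i$, $t\to\infty$, goes the wrong way: with $X_\m\w Y_\m$ held fixed, $[X,Y]$ acquires the terms $t([X_\m,W_2]+[W_1,Z])$ and $t^2[W_1,W_2]$, so the ratio $|[X,Y]|/|X_\m\w Y_\m|$ does not tend to $0$ unless you have already solved the exact system. The closing appeal to $Ad_{H_0}$-averaging is not an argument.

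The missing idea is to let the $\m_1$-component of \emph{one} of the two vectors shrink, rather than fixing both $\m_1$-components. Take $A\in\s$ witnessing the failure, i.e. $[Y_\s,A]\in\m_1\oplus\s\oplus\h'$ with $[Y_\s,A]_{\m_1}\neq 0$, and set $X:=X_\m+c[X_\m,A]$, $Y:=Y_\s+c[Y_\s,A]_{\m_1\oplus\s}$ for small $c$. Since $[X_\m,Y_\s]=0$, the Jacobi identity gives $[Y_\s,[X_\m,A]]=[X_\m,[Y_\s,A]]$, so the terms of order $c$ in $[X,Y]$ cancel up to $-c[X_\m,[Y_\s,A]_{\h'}]$, which vanishes because $[\k_0,\h']=0$; hence $[X,Y]=c^2[[X_\m,A],[Y_\s,A]_{\m_1\oplus\s}]$ is quadratic in $c$, while $|X_\m\w Y_\m|=|c|\,|X_\m|_Q\,|[Y_\s,A]_{\m_1}|_Q$ is linear in $c$ (here one uses $Q(X_\m,[Y_\s,A])=Q([X_\m,Y_\s],A)=0$). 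Dividing (\ref{eq:condition}) by $|c|$ and letting $c\to 0$ forces $[Y_\s,A]_{\m_1}=0$, which is exactly the transitivity criterion. Your ansatz $W_1=Y_\s$ and your use of $[X_\m,Y_\s]=0$ point in the right direction, but without this order-of-vanishing comparison the argument does not go through.
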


\begin{proof}
Evidently $N$ is closed and hence compact. Thus, $N$ acts transitively on $K_0/H_1$ if and only if $\dim(N/(N \cap H_1)) = \dim (K_0/H_1)$, since the former is the dimension of the $N$-orbit of $eH_1 \in K_0/H_1$. Now this equation is equivalent to saying that the projection of $\n\subset \k_0$ to $\m_1$ w.r.t. the splitting (\ref{eq:splitting-m1}) is surjective, or, equivalently, that $\n^\perp \cap \m_1 = 0$, where $\n = \z(Y_\s) \cap \k_0$ is the Lie algebra of $N$.

Observe that $\n^\perp = \z(Y_\s)^\perp + \k_0^\perp = [Y_\s, \g] + \s + \h'$. Thus, what we need to show is the following:

\be \label{condition}
\mbox{If for some $A \in \g$ we have $[Y_\s, A] \in \m_1 \oplus \s \oplus \h'$, then $[Y_\s, A] \in \s \oplus \h'$.}
\ee

Note that $[Y_\s, \k] \subset \s$, so that it suffices to show (\ref{condition}) for all $A \in \s$. Suppose therefore that for some $A \in \s$ we have $[Y_\s, A] \in \m_1 \oplus \s \oplus \h'$.

Then, for $c \in \R$ we let $X := X_\m + c [X_\m, A]$ and $Y := Y_\s + c [Y_\s, A]_{\m_1 \oplus \s}$. Since $[X_\m, A] \in [\k, \s] \subset \s$, the $\m_1$-component of $X$ is indeed $X_\m$, and $X, Y \in \m_1 \oplus \s$. Also, the $\m_1$-component of $Y$ equals the $\m_1$-component of $c [Y_\s, A]$. Moreover, $Q(X_\m, [Y_\s, A]) = Q([X_\m, Y_\s], A) = 0$  since $[X_\m, Y_\s] = 0$, so that
\[
|X_\m \w Y_\m|_Q = |c|\ |X_\m \w [Y_\s, A]_{\m_1}|_Q = |c|\ |X_\m|_Q\ |[Y_\s, A]_{\m_1}|_Q.
\]
On the other hand,
\[
\ba{lll}
[X, Y] & = & c ([X_\m, [Y_\s, A]_{\m_1 \oplus \s}] - \underbrace{[Y_\s, [X_\m, A]]}_{= [X_\m, [Y_\s, A]]}) + c^2 [[X_\m, A], [Y_\s, A]_{\m_1 \oplus \s}]\\ \\
& = & -c\ [X_\m, [Y_\s, A]_{\h'}] + c^2 [[X_\m, A], [Y_\s, A]_{\m_1 \oplus \s}]\\ \\
& = & c^2 [[X_\m, A], [Y_\s, A]_{\m_1 \oplus \s}]
\ea
\]
where the last equation follows since $[X_\m, [Y_\s, A]_{\h'}] \in [\k_0, \h'] = 0$. Thus, by (\ref{eq:condition}) we conclude that there is a $C > 0$ such that for all $c \in \R$,
\[
|c|\ |X_\m|_Q\ |[Y_\s, A]_{\m_1}|_Q \leq c^2\ C\ |[[X_\m, A], [Y_\s, A]_{\m_1 \oplus \s}]|.
\]
Dividing by $|c|$ and taking the limit for $c \ra 0$, we conclude that $[Y_\s, A]_{\m_1} = 0$, i.e., $[Y_\s, A] \in \s \oplus \h'$ which shows (\ref{condition}).
\end{proof}

Let $H \subset K \subset G$ be as in Proposition~\ref{prop:stabilizer} with $\dim H_0 > 0$. Let $L \subset K_0$ be the connected normal subgroup with Lie algebra $\l = \lb \m_1 \rb \lhd \k_0$ from Table~1. We fix the following $Q$-orthogonal $Ad_L$-invariant decomposition:

\be \label{eq:decompose-g}
 \g = \l \oplus \z(\l) \oplus \bigoplus_{\al \in \Phi_1} V_\al \oplus \bigoplus_{\al \in \Phi_2} V_\al =: \n(\l) \oplus \s_1 \oplus \s_2,
 \ee
where $\z(\l)$ and $\n(\l)$ denote the centralizer and the normalizer of $\l$, respectively, and $V_\al$ are non-trivial $Ad_L$-irreducible subspaces for $\al \in \Phi := \Phi_1 \cup \Phi_2$, where
 \[ \ba{ll}
\al \in \Phi_1 & \mbox{if there are elements $0 \neq X_\m \in \m_1$ and $0 \neq Y_s \in V_\al \subset \s$ such that $[X_\m, Y_\s] = 0$},\\Ê\\
\al \in \Phi_2 & \mbox{otherwise}.
\ea
\]

\begin{prop} \label{prop:stabilizer2}
Let $H \subset K \subset G$ and $L \subset K_0$ be as above, i.e., $K_0/H_0$ is one of the entries in Table~1 and (\ref{eq:condition}) holds, and consider the decomposition of $\g$ from (\ref{eq:decompose-g}). Let $\al \in \Phi_1$. Then one of the following holds.

\bi
\item
$L = K_0 = SU(2) \cdot SU(2)'$ so that $\k_0 = \so(4) \cong \su(2) \oplus \su(2)'$, and $V_\al$ is odd dimensional. Furthermore, after permuting $\su(2)$ and $\su(2)'$ if necessary, we have $[\su(2)', \s_1] = 0$.
\item
$K_0/H_0 \cong (T \cdot SU(4))/(T \cdot SU(3))$ so that $L = SU(4)$, and $V_\al$ is the six-dimensional standard representation of $SO(6) = SU(4)/\Z_2$.
\item
$K_0/H_0 \cong Spin(7)/G_2$ so that $L = Spin(7)$, and $V_\al$ is the seven-dimensional standard representation of $SO(7) = Spin(7)/\Z_2$.
\ei
\end{prop}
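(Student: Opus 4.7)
The plan is to apply Proposition~\ref{prop:stabilizer} to the hypothesis $\al \in \Phi_1$: for some $0 \neq Y_\s \in V_\al$, the identity component $N \subset K_0$ of the centralizer of $Y_\s$ acts transitively on $K_0/H_1$, equivalently $K_0 = N \cdot H_1$. The proof then proceeds by a case-by-case examination of the entries of Table~1, using the classification of transitive actions of compact Lie groups on spheres and projective spaces (Montgomery--Samelson for spheres, Onishchik for the general factorization problem).

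For each row one identifies $K_0/H_1$: a sphere in rows~1, 7, 8, 9; $\CP^m$ in row~2; $\HP^m$ in row~3. In rows~3, 7 and 9, and in row~1 for $n \notin \{3,5,6,7,15\}$, no proper closed subgroup of $K_0$ acts transitively on $K_0/H_1$; hence $\Phi_1$ is empty and the conclusion holds vacuously. In each remaining row, the transitive subgroup $N$ must further be realizable as the identity component of the stabilizer of a non-zero vector in some $L$-irreducible summand $V_\al$ of $\s$.

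For row~1 with $n = 3$, i.e.~$K_0 = SU(2) \cdot SU(2)'$, only those $V_\al$ on which one of the two $SU(2)$-factors acts trivially can have a stabilizer of the required form; the further requirement that some non-zero $X_\m \in \m_1$ annihilate a non-zero vector in $V_\al$ forces the other factor to act by a real (hence odd-dimensional) irreducible representation, and since every $V_\al \in \Phi_1$ is then fixed by the same $SU(2)'$-factor we obtain $[\su(2)', \s_1] = 0$ after relabelling --- this is bullet~1. For row~2 with $m = 3$, the Onishchik factorization $SU(4) = SU(3) \cdot Sp(2)$ that is compatible with being a vector stabilizer in an irreducible $L$-module gives $N = Sp(2)$, realized in the standard six-dimensional representation of $SO(6) = SU(4)/\pm$, yielding bullet~2. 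For row~8, the factorization $Spin(7) = Spin(6) \cdot G_2$ gives $N = Spin(6) = SU(4)$, realized as the stabilizer of a non-zero vector in the standard seven-dimensional representation of $SO(7) = Spin(7)/\pm$, yielding bullet~3.

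The main obstacle will be ruling out the remaining row~1 cases with $n \in \{5,6,7,15\}$, corresponding to the factorizations $SU(4) = SU(3) \cdot Sp(2)$, $SO(7) = G_2 \cdot SO(6)$, $SO(8) = Spin(7) \cdot SO(7)$, and $SO(16) = Spin(9) \cdot SO(15)$. For $n = 7$ and $n = 15$ this is achieved by invoking Proposition~\ref{prop:octonions}: the pairs $(X,Y)$ constructed there with $[X,Y] = 0$ and $X_\m \w Y_\m \neq 0$ lie in $\m_1 \oplus \s$ and directly violate (\ref{eq:condition}), contradicting the standing hypothesis. For $n = 5$ and $n = 6$ one argues instead that the $H_1$-equivariant bracket map $\m_1 \otimes V_\al \to V_\al$ is injective for every candidate irreducible $V_\al$; this reduces, via a Schur-type argument combined with the transitivity of $H_1$ on the unit sphere in $V_\al$, to showing that no non-zero $X_\m$ annihilates a single unit vector in $V_\al$. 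Hence $\al \notin \Phi_1$ in these cases, and only the three bullets survive.
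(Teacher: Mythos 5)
Your opening move (apply Proposition~\ref{prop:stabilizer} and work through Table~1) is the same as the paper's, but the execution has genuine gaps, the most serious in row~1. It is false that proper closed transitive subgroups of $SO(n+1)$ on $S^n$ exist only for $n\in\{3,5,6,7,15\}$: for every odd $n=2m-1$ one has $U(m)$ and $SU(m)$, and for $n=4m-1$ also the $Sp(m)$-type groups, so infinitely many cases you declare vacuous are not. The mechanism that actually closes row~1 is absent from your proposal: since $\m_1=\m$ there, every nonzero $X_\m$ is a real rank-two skew-symmetric matrix, so the Lie algebra of the stabilizer $N$ must contain a rank-two element; this eliminates $\su(m)$, the $\sp$-type algebras, $\g_2$, $\spin(7)$ and $\spin(9)$, leaving only $\u(m)\subset\so(2m)$, and then a Weyl-group/continuity argument (the stabilizer of each nonzero vector of the zero-weight space $V_0$ is one of the finitely many conjugates of $U(m)$ containing the fixed maximal torus, hence constant on $V_0$, hence Weyl-invariant, forcing $\so(2m)$ to be non-simple, i.e.\ $m=2$). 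Your substitute for $n=7,15$ --- invoking Proposition~\ref{prop:octonions} --- does not work here: that proposition constructs commuting pairs only inside the specific ambient algebras $\so(p+9)$ and ${\frak f}_4$, whereas in Proposition~\ref{prop:stabilizer2} the group $G$ is arbitrary, and it contains no construction at all for $Spin(15)\subset Spin(16)$; the exclusion must be intrinsic to $\k_0$, as above. Likewise your sketch for $n=5$ cannot be a pure Schur-type statement, since $\u(3)\subset\so(6)$ does contain rank-two elements and is only eliminated by the Weyl-group step.

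Similar gaps occur in the cases you keep. In row~2 you treat only $m=3$, but $T'\cdot Sp(k)\subset T\cdot SU(2k)$ acts transitively on $\CP^{2k-1}$ for every $k\ge2$, so $k\ge3$ must be excluded (the paper does this by a weight count: a weight killed by $X_0=diag(i,-i,0,\dots,0)$ has stabilizer of rank at least $2k-2$, while $Sp(k)$ has rank $k$), and the identification of $V_\al$ as the six-dimensional representation is asserted rather than derived. In row~8 you never rule out the other transitive subgroups $Sp(2)$ and $S^1\cdot Sp(2)$ of $Spin(7)$, nor do you show that $V_\al$ must be the standard seven-dimensional representation (the paper: $V_\al$ cannot be of spin type, hence has $0$ as a weight; the stabilizer contains a maximal torus and must be Weyl-invariant, hence is $Spin(6)$; then no long root can be a weight). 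Finally, in bullet~1 the claim that one and the same $SU(2)$-factor annihilates all of $\s_1$ needs an argument --- the paper uses that this factor is normal in $SO(4)$, so its fixed-point set is an invariant subspace containing $V_0$ and therefore all of $\s_1$ --- whereas you take it for granted, and the odd-dimensionality of $V_\al$ should likewise come from the zero-weight argument rather than from an unproved ``real representation'' claim.
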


\begin{proof} Let $N \subset K_0$ be the stabilizer of $Y_\s \in V_\al$ and $\n \subset \k_0$ be its Lie algebra. If $L \subset N$, then $Ad_L(Y_\s) = Y_\s$, so that $Y_\s \in \z(\l)$ which is impossible. Thus, $L \not \subset N$.

By Proposition~\ref{prop:stabilizer}, $N \subset K_0$ acts transitively on $K_0/H_1$. We shall work through the possibilities for $K_0$ from Table~1. 

\bi
\item $K_0/H_1 = SO(n+1)/SO(n) = S^n$ ($Spin(n+1)/Spin(n) = S^n$, resp.) $L = SO(n+1)$ ($L = Spin(n+1)$, resp.).

In this case, $N \subsetneq SO(n+1)$ must be a subgroup which acts transitively on $S^n$, i.e., $N$ must be one of the entries of Table~1. Also, any element $0 \neq X_\m \in \m_1 = \m \subset \so(m+1)$ is a matrix of real rank two, i.e., the Lie algebra $\n \subset \so(n+1)$ of $N \subset SO(n+1)$ must contain such a matrix.

We claim that the only subgroup $N \subsetneq SO(n + 1)$ from Table~1 whose Lie algebra contains elements of real rank two is $U(m) \subset SO(2m)$. Namely, $\su(m) \subset \so(2m)$ contains no elements of complex rank one and hence of real rank two. Next, any element of $\t \oplus \sp(m) \subset \so(4m)$ for $m \geq 2$ is conjugate to an element of the form $X_\m = (\la_0 i, i\ diag(\la_1, \ldots, \la_m))$ with $\la_i \in \R$.  Now the real rank of this element viewed as an endomorphism of $\H^m$ is easily seen to be at least $4$, excluding this case. Next, since $\su(3) \subset \g_2$ have equal rank, any element of $\g_2$ is conjugate to an element of $\su(3) \subset \so(6) \subset \so(7)$. Since $\su(3)$ contains no matrices of real rank two, $\n \cong \g_2$ is also impossible. Likewise, if $X \in \spin(7) \subset \so(8)$ had real rank two, then $X$ would lie in the isotropy algebra of $S^7 = Spin(7)/G_2$, hence $X \in \g_2 \subset \spin(7)$ and thus cannot have rank two by the previous case. Finally, if $X \in \spin(9)$ has rank two, then it lies in the isotropy algebra of $S^{15} = Spin(9)/Spin(7)$, acting on $\R^7 \oplus \R^8$ via the standard and the spin representation, respectively. But by the previous, the action of $X \in \spin(7)$ on $\R^8$ must have rank larger than two which rules out this case as well.

Let $T^m \subset SO(2m)$ be a maximal torus with Lie algebra $\t^m \subset \so(2m)$, and suppose w.l.o.g. that $X_\m \in \t^m$. Let $V_0 \subset \s_1 = \bigoplus_{\al \in \Phi_1} V_\al$ be the subspace stabilized by $T^m$. By our assumption, there are elements of $V_\al$ whose stabilizer is isomorphic to $U(m)$ and hence contains a maximal torus, so that $V_0 \neq 0$. Since $X_\m \in \t^m$, it follows that the stabilizer of any $0 \neq Y \in V_0$ is conjugate $U(m)$ where $T^m \subset U(m) \subset SO(2m)$. But there are only finitely many conjugates of $U(m)$ which contain $T^m$, namely the conjugates by elements of the Weyl group $W := Norm_{SO(2m)} T^m/T^m$, so there are only finitely many choices for this stabilizer. On the other hand, the stabilizer of $0 \neq Y \in V_0$ depends continuously on $Y$, hence all of $V_0$ is stabilized by a fixed subgroup $U(m) \subset SO(2m)$.

Since $V_0$ is invariant under the action of the Weyl group $W$, it follows that $U(m)$ is invariant under conjugation by $W$ as well. In particular $\z(\u(m)) \subset \t^m$ is invariant under the Weyl group, i.e., the Weyl group cannot act irreducibly on $\t^m$, hence $SO(2m)$ cannot be simple, so that $m = 2$, i.e., $N = U(2) \subset SO(4)$.

In particular, $SU(2) \subset N$ acts trivially on $V_0$, and since $SU(2) \subset SO(4)$ is a normal subgroup, it must act trivially on the invariant subspace generated by $V_0$ which is all of $\s_1$. Also, by the above, the representation on $V_\al$ must have $0$ as a weight, hence it is odd dimensional as asserted in the first case.

\item $K_0/H_1 = (T \cdot SU(m+1))/(T \cdot U(m)) = \CP^m$, $m \geq 1$, $L = SU(m+1)$.

If $N \subset T \cdot SU(m+1)$ acts transitively on $\CP^m$, then $S^1 \cdot N \subset U(m+1)$ acts transitively on $S^{2m+1} \subset \C^{m+1}$ and hence must be one of the entries in Table~1. Since $L \not \subset N$, we must have $N = T' \cdot Sp(k) \subset T \cdot SU(2k)$ where $2k = m+1$ and $T' \subset T$ is at most one dimensional. That is, $N \cap L = Sp(k)$, and $k \geq 2$ because $L \not \subset N$.

Let $\t \subset \su(2k)$ be the Lie algebra of the maximal torus consisting of all diagonal matrices. Note that any element $0 \neq X_\m \in \m_1 \subset \su(2k)$ is conjugate to a multiple of $X_0 := diag(i, -i, 0, \ldots, 0) \in \t$. Decompose
\[
V_\al = \bigoplus_\la W_\la
\]
into the weight spaces w.r.t. the maximal torus $\t$.

Since $k \geq 2$, any non-trivial representation $V_\al$ of $L = SU(2k)$ must have a weight $\la_0 \neq 0$ which is annihilated by $X_0 \in \t$ from above, so by our assumption, the stabilizer of each non-zero element of $W_{\la_0}$ must be conjugate to $N \cap L = Sp(k)$ and hence has rank $k$. On the other hand, $W_{\la_0}$ is stabilized by a hyperplane of the maximal torus of $L = SU(2k)$ and hence this stabilizer has rank at least $2k - 2$. It follows that $k = 2$, and we may assume that the Lie algebra of the maximal torus of $\sp(2) \subset \su(4)$ is spanned by $X_0 = diag(i, -i, 0, 0)$ and $diag(0, 0, i, -i)$. That is, any weight $\la$ of $V_\al$ which is annihilated by $X_0 = diag(i, -i, 0, 0) \in \t \subset \su(4)$ must also be annihilated by $diag(0, 0, i, -i)$. From here it easily follows that the only irreducible representation of $L = SU(4)$ with this property is the $6$-dimensional one which is the second case.

\item $K_0/H_1 = (T \cdot Sp(m+1))/(T \cdot Sp(1) \cdot Sp(m)) = \HP^m$, $m \geq 1$, $L = Sp(m+1)$.

If $N \subset T \cdot Sp(m+1)$ acts transitively on $\HP^m$, then $Sp(1) \cdot N \subset Sp(1) \cdot Sp(m+1)$ acts transitively on $S^{4m+3} \subset \H^{m+1}$ and hence must be one of the entries in Table~1. From there is follows that $N = T' \cdot Sp(m+1)$ where $T' \subset T$, so that $L \subset N$ which is impossible.

\item $K_0/H_1 = G_2/SU(3) = S^6$, $L = G_2$.

Table~1 reveals that there is no proper subgroup of $G_2$ acting transitively on $S^6$, so that we must have $N = G_2 = L$, which is again impossible.

\item $K_0/H_1 = Spin(7)/G_2 = S^7$, $L = Spin(7)$.

The only subgroups of $Spin(7)$ which act transitively on $S^7$ are $SU(4) \cong Spin(6)$, $S^1 \cdot Sp(2) \cong Spin(2) \cdot Spin(5)$ and $Sp(2) \cong Spin(5)$.

Let $\t^3 \subset \spin(7)$ be the Lie algebra of a maximal torus $T^3 \subset Spin(7)$, and suppose w.l.o.g. that $X_\m \in \t^3$. If the representation of $Spin(7)$ on $V_\al$ was of spin type, then all elements $0 \neq X_\m$ would act by isomorphisms on $V_\al$, contradicting our assumption on $X_\m$. Thus, $V_\al$ is {\em not }Êof spin type and hence has $0$ as a weight and we let $V_0 \subset \s_1 =\bigoplus_{\al \in \Phi_1} V_\al$ be the space stabilized by $T^3$. Thus, the stabilizer $N$ of any element in $V_0$ must contain $T^3$, hence must be either $Spin(6)$ or $Spin(2) \cdot Spin(5)$. Since none of these two groups is conjugate to a subgroup of the other, we can argue as in the first case to conclude that all $0 \neq Y \in V_0$ have the same subgroup as stabilizer, and this subgroup must be invariant under conjugation by the Weyl group $W$. But $Spin(2) \cdot Spin(5) \subset Spin(7)$ is {\em not} invariant under $W$, hence $N = Spin(6) \subset Spin(7)$. 

It follows that no long root of $\so(7)$ can be a weight of $V_\al$, and the only irreducible representation of $Spin(7)$ which has $0$ but no long root as a weight is the standard one of $SO(7) = Spin(7)/\Z_2$ on $\R^7$ which is the third case.

\item $K_0/H_1 = Spin(9)/Spin(8) \cong SO(9)/SO(8) = S^8$, $L = Spin(9)$.

By Table~1, $SO(9)$ and $Spin(9)$ are the only groups acting transitively on $S^8$, hence we must have $N = Spin(9) = L$ which cannot be the case.
\ei
\vspace{-9mm}
\end{proof}

\begin{prop} \label{prop:stabilizer3}
Let $H \subset K \subset G$ and $L \subset K_0$ be as in Proposition~\ref{prop:stabilizer2}, and consider the decomposition of $\g$ from (\ref{eq:decompose-g}). Suppose that $\al \in \Phi_2$. Then one of the following cases must hold.

\bi
\item
$K_0 = L = Spin(n+1)$ acting on $S^n$ for some $n \geq 2$, and $V_\al$ is an irreducible representation of spin type, i.e., is not the lift of a representation of $SO(n+1) = Spin(n+1)/\Z_2$.
\item
$K_0 = T \cdot SU(2)$ acting on $S^3$, $L = SU(2) = Spin(3)$, and $V_\al$ is an irreducible representation of $L$ of spin type.
\item
$K_0 = T \cdot Sp(2)$ acting on $S^7$, $L = Sp(2) = Spin(5)$, and $V_\al$ is an irreducible representation of $L$ of spin type.
\item
$K_0 = L = Spin(7)$ acting on $S^7$, and $V_\al$ is an irreducible representation of $L$ of spin type.
\item
$K_0 = L = Spin(9)$ acting on $S^{15}$, and $V_\al$ is an irreducible representation of $L$ of spin type.
\ei

In all cases, $L = Spin(n)$ for some $n$, and $\Z_2 \subset Z(Spin(n))$ acts as $\pm Id$ on $V_\al$, where $\Z_2$ is the kernel of the covering $Spin(n) \ra SO(n)$.
\end{prop}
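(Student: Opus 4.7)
The plan is to reformulate $\al\in\Phi_2$ as an invertibility condition and then work through Table~1 row by row. Since $V_\al$ is finite-dimensional, $\al\in\Phi_2$ is equivalent to saying that $\mathrm{ad}(X):V_\al\to V_\al$ is invertible for every $0\neq X\in\m_1$. Because $H_1$ acts transitively on the unit sphere of $\m_1$, any two unit vectors of $\m_1$ are $L$-conjugate, so conjugating a single representative $X$ into a maximal torus $\t\subset\l$ yields a distinguished element $T_0\in\t$ (unique up to Weyl action and positive scaling), and the invertibility condition becomes: no weight of $V_\al$ vanishes on $T_0$ or on any of its Weyl conjugates.

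Rows 1, 8, 9 of Table~1 share the structure $L\cong Spin(n)$ with $\m_1\cong\R^n$ realized inside $\so(n)$ as the rank-two skew-symmetric endomorphisms arising from the relevant sphere's isotropy. Any such $X$ is $Ad_L$-conjugate to $T_0 = e_1\w e_2$ in the standard maximal torus spanned by $\{H_j = e_{2j-1}\w e_{2j}\}_{j=1,\ldots,k}$, where $k=\lfloor n/2\rfloor$. Writing weights of $V_\al$ in coordinates $(\mu_1,\ldots,\mu_k)$, the action of $T_0$ on a weight-$\mu$ vector has eigenvalue $i\mu_1$, so invertibility reduces to $\mu_1\neq 0$ for all weights, which by Weyl symmetry strengthens to $\mu_j\neq 0$ for every $j$ and every weight. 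An irrep of $Spin(n)$ has this property exactly when its weights are half-integer tuples, i.e., when it is a spin representation; a non-spin irrep always admits a weight with a vanishing coordinate, as one sees by descending from the highest weight via simple root subtractions. This yields cases (a), (d) and (e).

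Rows 2 and 3 are treated analogously. In row 2, $L = SU(m+1)$ and a direct computation shows every $0\neq X\in\m_1\cong\C^m$ has characteristic polynomial $\la^{m-1}(\la^2+|X|^2)$ on the standard representation, hence is $Ad_L$-conjugate to a positive multiple of $T_0 = \mathrm{diag}(i,-i,0,\ldots,0)\in\su(m+1)$. Invertibility of $T_0$ forces $\mu_1\neq\mu_2$ for every weight, and Weyl symmetry enforces regularity of every weight. For $m\geq 2$ an $\mathfrak{sl}_2$-string argument along the root $\al_{ij} := e_i - e_j$ for a pair $(i,j)$ chosen so that $\la_i - \la_j$ is even (always possible for a non-trivial dominant $\la$) produces a weight fixed by the reflection $s_{\al_{ij}}$, i.e., with $\mu_i = \mu_j$, contradicting the invertibility condition; the sole surviving case $m=1$ gives $L = SU(2) = Spin(3)$ and the spin representations, i.e., case (b). Row 3 is handled identically with the quaternionic analogue $T_0 = \mathrm{diag}(i,i,0,\ldots,0)\in\sp(m+1)$: invertibility together with Weyl symmetry (signed permutations) enforces $|\mu_i|\neq|\mu_j|$ for all $i\neq j$ and every weight, ruling out $m\geq 2$ by an analogous string argument and leaving only $m=1$ with $L = Sp(2) = Spin(5)$ and the spin representations, i.e., case (c).

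Finally, for row 7 ($L = G_2$ with $\m_1\cong\C^3$) a dimension count shows that the $G_2$-orbit of a regular torus element $T_0\in\t$ must intersect the $6$-dimensional $\m_1$ inside the $14$-dimensional $\g_2$, so invertibility would have to hold for all regular $T_0\in\t$; this forces every nonzero weight of $V_\al$ to be proportional to a root of $G_2$ and excludes $0$ from the weights, but every nontrivial $G_2$-irrep violates one of these conditions (a direct check against the standard weight tables), ruling out this row. Assembling the surviving cases (a)--(e), $L\cong Spin(n)$ for some $n$ in each (using $SU(2)\cong Spin(3)$ and $Sp(2)\cong Spin(5)$), and $V_\al$ is a spin representation of $L$; the final assertion that $\Z_2\subset Z(Spin(n))$ acts as $\pm Id$ on $V_\al$ is then immediate from Schur's lemma. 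The main technical obstacle is the weight-theoretic rigidity in the $SU$, $Sp$, and $G_2$ cases, where one must exhibit, in every nontrivial irrep of the excluded type, a specific weight in the forbidden configuration.
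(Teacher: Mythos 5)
Your overall strategy -- recasting $\al\in\Phi_2$ as invertibility of $\mathrm{ad}(X)|_{V_\al}$ for every $0\neq X\in\m_1$, conjugating a representative of $\m_1$ into a maximal torus, and then running through Table~1 by weight arguments -- is exactly the paper's, and your treatment of rows 1, 2, 3 and 9 (the $SO/Spin(n+1)$, $SU(m+1)$, $Sp(m+1)$ and $Spin(9)$ cases) is correct; the pigeonhole-plus-root-string argument you use to kill $m\geq 2$ in the $SU$ and $Sp$ rows is a legitimate variant of the paper's explicit weights $\th_1+\cdots+\th_k$ and $\th_3$.

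Two of your case analyses, however, rest on false structural claims. For row 8 ($K_0=L=Spin(7)$, $H_0=G_2$) you assert that $\m_1\subset\so(7)$ consists of rank-two skew-symmetric endomorphisms conjugate to $e_1\w e_2$. This is wrong: $\m_1=\g_2^\perp\cap\so(7)=\{L_q-R_q \mid q\in Im(\O)\}$, and $L_i-R_i$ has a one-dimensional kernel on $Im(\O)$ and three rotation planes of equal speed on its complement, so every nonzero element of $\m_1$ (they are all $Ad_{G_2}$-conjugate up to scale) is conjugate to a multiple of the torus element with coordinates $(1,1,1)$, not $(1,0,0)$; hence your reduction ``$\mu_1\neq 0$ for all weights'' is not the condition this row imposes. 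For row 7 ($L=G_2$) the ``dimension count'' is also false: a nonzero $X\in\m_1$ is centralized by the $\su(2)\subset\su(3)$ stabilizing it together with $\R X$, so its centralizer has dimension at least $4$ and $X$ is singular; the adjoint orbit of a regular torus element never meets $\m_1$, and even if it did, that would constrain only that one element rather than all regular elements, and the further inference that every nonzero weight must be proportional to a root does not follow from anything you state. Both rows are rescued by the simpler observation the paper uses and which you only exploit implicitly elsewhere: any nonzero $X\in\m_1$ is conjugate into the maximal torus and therefore annihilates the zero-weight space, so $V_\al$ can have no zero weight; since every nontrivial irreducible representation of $G_2$ has $0$ as a weight (its weight lattice equals its root lattice), row 7 is excluded outright, and since a non-spin irreducible representation of $Spin(2k+1)$ has integer highest weight lying in the root lattice of $B_k$ and hence also has $0$ as a weight, row 8 forces $V_\al$ to be of spin type. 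With those two paragraphs replaced by this zero-weight argument, your proof is complete and coincides with the paper's.
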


\begin{proof}
Again, we go through the various possibilities for $K_0$ and $L$ from Table~1. First note that the representation of $V_\al$ cannot have $0$ as a weight, since all elements $0 \neq X_\m \in \m_1$ are conjugate to elements of the maximal torus which would annihilate the $0$-weight space.

\bi
\item $L = K_0 = SO(n+1)$ or $Spin(n+1)$, $n \geq 2$.

Any element $X_\m \in \m_1 = \m$ is conjugate to an element of the maximal Lie algebra of $\so(n+1)$ corresponding to the weight element $\th_1$. Every irreducible representation of $Spin(n+1)$ which is not of spin type and does not have $0$ as a weight has all $\th_i$ with $i = 1, \ldots, [(n+1)/2]$ as weights.

If $n \geq 3$, then $\th_2$ is a weight which is annihilated by $X_\m$, contradicting our assumption. If $n = 2$, then any representation of $Spin(3) \cong SU(2)$ which does not have $0$ as a weight is of spin type.

\item $K_0 = T \cdot SU(m+1)$, $m \geq 1$, $L = SU(m+1)$.

Any element $X_\m \in \m_1$ is conjugate to a multiple of $diag(i, -i, 0, \ldots, 0) \in \t$ which is the element of the maximal Lie algebra of $\su(m+1)$ corresponding to the weight element $\th_1 - \th_2$. Every irreducible representation of $SU(m+1)$ has a weight of the form $\la = \th_1 + \ldots + \th_k$ for some $k \leq m$. If $k \geq 2$, then $\lb \th_1 - \th_2, \la \rb = 0$ so that $X$ acts trivially on the weight space $W_\la$ which is impossible. Thus, we must have $k = 1$.

In this case, all elements $\th_i, i = 1, \ldots, m+1$ are weights, and if $m \geq 2$, then $\la = \th_3$ is a weight with $\lb \th_1 - \th_2, \la \rb = 0$, so that $X_\m$ acts trivially on $W_\la$ which is impossible.

Thus, we must have $m = 1$, i.e., $L = SU(2) = Spin(3)$, and since $0$ is not a weight of $V_\al$, it follows that $V_\al$ is even dimensional and therefore of spin type.

\item $K_0 = T \cdot Sp(m+1)$, $m \geq 1$, $L = Sp(m+1)$.

Any element $X_\m \in \m_1$ is conjugate to a multiple of $diag(i, i, 0, \ldots, 0) \in \t$ which is the element of the maximal torus of $\sp(m+1)$ corresponding to the weight element $\th_1 + \th_2$. Every irreducible representation of $Sp(m+1)$ which does not have $0$ as a weight has $\th_i, i = 1, \ldots, m+1$ as weights. Thus, if $m \geq 2$, then $\la = \th_3$ is a weight, and since $\lb \th_1 + \th_2, \la \rb = 0$, it follows that $X$ acts trivially on the weight space $W_\la$ which is impossible. Thus, we must have $m = 1$, hence $L = Sp(2) = Spin(5)$.

Now it is easy to see that a representation of $Spin(5)$ factors through $SO(5)$ if and only if it has $0$ as a weight. Since we assume this {\em not} to be the case, $V_\al$ must be a spin type representation of $L = Sp(2) = Spin(5)$.

\item $K_0 = L = G_2$.

Every representation of $G_2$ has $0$ as a weight, so this case is impossible.

\item $K_0 = L = Spin(7)$ or $K_0 = L = Spin(9)$.

An irreducible representation of $Spin(2k+1)$ factors through a representation of $SO(2k+1) = Spin(2k+1)/\Z_2$ if and only if it has $0$ as a weight. Since $V_\al$ does not have $0$ as a weight, we conclude that it must be a representation of spin type.
\ei

For the final assertion, note that $\Z_2 = ker (Spin(n) \ra SO(n))$ acts non-trivially in all cases, and since $\Z_2 \subset Z(Spin(n))$, it follows from Schur's Lemma that it must act on $V_\al$ as a multiple of the identity.
\end{proof}

\begin{cor} \label{cor:symmetric}
Let $H \subset K \subset G$ and $L \subset K_0 \subset K$ be as in Proposition~\ref{prop:stabilizer2}, and consider the decomposition of $\g$ from (\ref{eq:decompose-g}). Then there is an element $\sigma \in L$ such that $\sigma^2 = 1$, and such that $Ad_\sigma|_{\n(\l) \oplus \s_1} = Id$ and $Ad_\sigma|_{\s_2} = -Id$.

In particular, $(\g, \n(\l) \oplus \s_1) =: (\g, \n_0)$ is a symmetric pair whose reflection is given by $Ad_\sigma$.
\end{cor}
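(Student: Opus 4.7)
The plan is to exhibit $\sigma$ as a specific central involution in $L$, using the classifications of Propositions~\ref{prop:stabilizer2} and \ref{prop:stabilizer3}. In every case of Proposition~\ref{prop:stabilizer3}, the group $L$ is (or is naturally identified with) a spin group $Spin(n)$, and the final sentence of that proposition supplies exactly what is needed: the nontrivial element $\sigma \in \ker(Spin(n) \to SO(n)) \subset Z(L)$ is an order-two element acting as $-Id$ on every $V_\al$ with $\al \in \Phi_2$. If $\Phi_2 = \emptyset$, I simply take $\sigma = e$.

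Next, I check that $Ad_\sigma|_{V_\al} = Id$ for $\al \in \Phi_1$ by inspecting the three cases of Proposition~\ref{prop:stabilizer2}. In cases 2 and 3, $V_\al$ is the standard representation of $SO(6) = SU(4)/\Z_2$ or $SO(7) = Spin(7)/\Z_2$, and $\sigma$ is the kernel element, which therefore acts trivially. In case 1, $L$ is a quotient of $Spin(4) = SU(2) \times SU(2)'$; if $L = Spin(4)$, then $\sigma = (-1,-1)$ acts trivially because every odd-dimensional representation of $SU(2)$ factors through $SO(3) = SU(2)/\{\pm 1\}$ and the second factor acts trivially on $\s_1$ by the proposition. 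Any proper quotient of $Spin(4)$ admits no spin-type representations, in which case $\Phi_2 = \emptyset$ and $\sigma = e$ suffices.

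For the action on $\n(\l) = \l \oplus \z(\l)$, I use that $\sigma \in Z(L)$ gives $Ad_\sigma|_\l = Id$, together with the general fact that the connected group $L$ acts trivially on $\z(\l)$ via $Ad$ (since $Ad_{\exp(Y)} X = X$ whenever $Y \in \l$ and $X \in \z(\l)$). Combining everything, $Ad_\sigma$ is a Lie-algebra involution of $\g$ with $(+1)$-eigenspace $\n_0 = \n(\l) \oplus \s_1$ and $(-1)$-eigenspace $\s_2$, which is exactly the data defining the symmetric pair $(\g, \n_0)$. The only real obstacle is the case-by-case bookkeeping against the two preceding propositions; once the cases are laid out, everything is forced by design, because $\Phi_1$ and $\Phi_2$ were defined precisely to separate representations on which the central involution $\sigma$ acts trivially versus as $-Id$.
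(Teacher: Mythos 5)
Your argument is correct and is essentially the paper's own proof: the same $\sigma$ (the nontrivial element of $\ker(Spin(n)\to SO(n))$, or the identity when $\s_2=0$), with Proposition~\ref{prop:stabilizer3} giving $Ad_\sigma=-Id$ on $\s_2$, centrality giving $Ad_\sigma=Id$ on $\n(\l)$, and Proposition~\ref{prop:stabilizer2} giving $Ad_\sigma=Id$ on $\s_1$ (you are in fact slightly more explicit than the paper in the $SU(2)\cdot SU(2)'$ case). One small inaccuracy -- a proper quotient such as $SO(3)\times SU(2)$ does admit representations not factoring through $SO(4)$ -- is harmless here, since Proposition~\ref{prop:stabilizer3} already forces $L$ to be the full spin group whenever $\Phi_2\neq\emptyset$, which is all you use.
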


\begin{proof} If $L \not\cong Spin(n)$ for some $n$, then $\s_2 = 0$ by Proposition~\ref{prop:stabilizer3}, so that in this case the claim holds for $\sigma = 1 \in L$.

If $L \cong Spin(n)$ for some $n$, then let $\sigma \in L$ be the non-trivial element in the kernel of the covering $Spin(n) \ra SO(n)$. By Proposition~\ref{prop:stabilizer3}, $Ad_\sigma$ acts as $-Id$ on $\s_2$.

Also, $Ad_\sigma$ acts trivially on the normalizer $\n(\l)$ since $\sigma \in Z(L)$. Finally, if $\s_1 \neq 0$, then by Proposition~\ref{prop:stabilizer2} we have in all cases that $\s_1$ is a representation of $SO(n) = Spin(n)/\Z_2$, so that $Ad_\sigma|_{\s_1} = Id$.
\end{proof}

It thus follows that $\n_0 := \n(\l) \oplus \s_1 \subset \g$ is a Lie subalgebra, and we let $N_0 \subset G$ be the corresponding connected Lie subgroup, so that $G/N_0$ is a symmetric space whose dimension equals that of $\s_2$. Let us consider the various possibilities for $N_0$ is more detail.

\begin{prop} \label{prop:N0}
Let $H \subset K \subset G$ and $L \subset K_0 \subset K$ be as in Proposition~\ref{prop:stabilizer2}, and let $N_0 \subset G$ be the connected subgroup with Lie algebra $\n_0 = \n(\l) \oplus \s_1$. Then there are normal subgroups $\tilde L, \tilde N_0 \subset N_0$ such that $N_0 = \tilde L \cdot \tilde N_0$ and $L \subset \tilde L$, and one of the following holds.

\bi
\item
$\tilde L = L$,
\item
$K_0/H_0 = SU(2) \cdot SU(2)'/\triangle SU(2) = S^3$, $L = K_0$, and $\tilde L = \hat L \cdot SU(2)'$ for some normal subgroup $\hat L \subset \tilde L$, where $SU(2) \subset \hat L$ is such that $Z(SU(2)) \subset Z(\hat L)$.
\item
$K_0/H_0 = (T \cdot SU(4))/(T \cdot SU(3))$, $\dim T \leq 1$, $L = SU(4) \cong Spin(6)$ and $\tilde L = Spin(7)$, where $L = Spin(6) \subset Spin(7)$ by the standard inclusion. Moreover, the adjoint action of $\tilde L$ on $\s_2$ is of spin type.
\item
$K_0/H_0 = Spin(7)/G_2$, $L = K_0 = Spin(7)$ and $\tilde L \in \{SO(8), Spin(8), Spin(9)\}$. Moreover, for each $\tilde L$-irreducible subspace $\tilde V \subset \s_2$, one of the following holds:
\bi
\item $Spin(7) \hookrightarrow Spin(7 + p)$ is the lift of the standard inclusion $SO(7) \subset SO(7+p)$ for $p = 1, 2$, and the action of $\tilde L$ on $\tilde V$ is of spin type, or
\item $Spin(7) \subset SO(8) = \tilde L$ ($Spin(7) \subset Spin(8) = \tilde L$, respectively) is given by (the lift of) the spin representation of $Spin(7)$, and the representation of $\tilde L$ on $\tilde V$ is (the lift of) a representation of $SO(8) \supset Spin(7)$ which does not have $0$ as a weight.
\ei
\ei
\end{prop}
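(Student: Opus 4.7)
The plan is to define $\tilde L \subset N_0$ by letting $\tilde\l$ be the smallest ideal of the reductive Lie algebra $\n_0$ containing $\l$, and to take $\tilde N_0$ as the connected subgroup corresponding to the $Q$-orthogonal complement of $\tilde\l$; compactness of $N_0$ ensures that this complement is again an ideal, so $N_0 = \tilde L \cdot \tilde N_0$ with both factors normal. By Corollary~\ref{cor:symmetric}, $[\s_1, \s_1] \subset \n(\l) = \l \oplus \z(\l)$, while $[\l, \s_1] \subset \s_1$ and $[\z(\l), \s_1] \subset \s_1$ (the latter because $\z(\l)$ preserves each $Ad_L$-isotypic component of $\s$), so $\tilde\l = \l \oplus \s_1 \oplus \mathfrak{a}$ for some $\mathfrak{a} \subset \z(\l)$. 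If $\s_1 = 0$ this yields $\tilde\l = \l$ and places us in Case~1.

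For $\s_1 \neq 0$, Proposition~\ref{prop:stabilizer2} restricts $K_0/H_0$ to one of three specific quotients, which I would analyze separately. In the case $K_0/H_0 = (SU(2) \cdot SU(2)')/\triangle SU(2)$, the condition $[\su(2)', \s_1] = 0$ from Proposition~\ref{prop:stabilizer2} combined with $[\su(2)', \su(2)] = 0$ shows that $\su(2)'$ commutes with all of $\tilde\l$ and splits off as a direct factor, yielding $\tilde L = \hat L \cdot SU(2)'$ of Case~2; the property $Z(SU(2)) \subset Z(\hat L)$ follows by Schur's lemma, since the odd-dimensional $SU(2)$-irreducibles $V_\alpha \subset \s_1$ descend to $SO(3)$-representations and hence the central element $-1 \in SU(2)$ acts trivially on $\s_1$, making it commute with the bracket-generated pieces of $\hat\l$. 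In the case $K_0/H_0 = (T \cdot SU(4))/(T \cdot SU(3))$, each $V_\alpha$ is the standard six-dimensional representation of $SO(6) = SU(4)/\Z_2$, and $\l \oplus V_\alpha$ closes to $\spin(7)$ via the symmetric pair $(\spin(7), \spin(6))$, giving $\tilde L = Spin(7)$ of Case~3; the spin-type action on $\s_2$ comes from reapplying Proposition~\ref{prop:stabilizer3} to the enlarged group $\tilde L$, and no further enlargement to $\spin(m)$ for $m \geq 8$ is consistent with the combined normality and representation-type constraints.

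The main obstacle is the remaining sub-case $K_0/H_0 = Spin(7)/G_2$, which produces Case~4 with three possibilities for $\tilde L$. Each $V_\alpha \subset \s_1$ is the standard seven-dimensional $SO(7)$-representation, and $\l \oplus V_\alpha$ closes to $\spin(8)$ via the symmetric pair $(\spin(8), \spin(7))$. Whether $\tilde L = SO(8)$ or $\tilde L = Spin(8)$ is determined by whether the non-trivial central element of $Spin(8)$ acts trivially on $\s_1 \oplus \s_2$, which in turn is controlled by Proposition~\ref{prop:stabilizer3}. When $\s_1$ contains at least two copies of the vector representation, their mutual brackets span an additional $\so(2) \subset \z(\l)$, and $\tilde\l$ closes to $\spin(9)$ via the decomposition $\spin(9) = \spin(7) \oplus \so(2) \oplus (\R^7 \otimes \R^2)$; larger enlargements to $\spin(m)$ for $m \geq 10$ are excluded because the corresponding symmetric complements over $\spin(7)$ contain $Ad_{Spin(7)}$-representations incompatible with the $\s_1$-$\s_2$ constraints. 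For the final dichotomy on $\tilde V \subset \s_2$, one reapplies Proposition~\ref{prop:stabilizer3} to $\tilde L$: either $\tilde V$ is a spin-type representation of $\tilde L = Spin(7+p)$ with $Spin(7)$ embedded as the lift of the standard inclusion $SO(7) \subset SO(7+p)$, or $\tilde V$ is a non-spin representation of $SO(8)$ (possibly lifted to $Spin(8) = \tilde L$) without zero weight, corresponding to $Spin(7) \subset \tilde L$ via the spin representation.
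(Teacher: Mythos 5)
Your setup (take $\tilde\l$ to be the smallest ideal of $\n_0$ containing $\l$, note $\s_1\subset\tilde\l$ and $\tilde\l=\l\oplus\s_1\oplus\mathfrak a$ with $\mathfrak a\subset\z(\l)$) is fine, and your treatment of the $SU(2)\cdot SU(2)'$ case essentially matches the paper. But the heart of the proposition is ruling out \emph{larger} candidates for $\tilde\l$ when $\s_1$ contains several copies of the vector representation, and here your argument has a genuine gap: you dismiss the enlargements with phrases like ``no further enlargement to $\spin(m)$ for $m\geq 8$ is consistent with the combined normality and representation-type constraints'' and ``larger enlargements \dots are excluded because the corresponding symmetric complements \dots are incompatible with the $\s_1$--$\s_2$ constraints.'' There is no such representation-theoretic or normality obstruction: $\so(6+p)\supset\so(6)\oplus\so(p)$ with complement $\R^6\otimes\R^p$ (resp.\ $\so(7+p)\supset\so(7)\oplus\so(p)$ with $\R^7\otimes\R^p$) is a perfectly consistent symmetric pair in which all copies of the vector representation could lie in $\s_1$, so purely algebraic reasoning cannot stop at $\spin(7)$, resp.\ $\spin(8)/\spin(9)$. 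The paper excludes these cases only by invoking the curvature hypothesis~(\ref{eq:condition}): for $\tilde\l=\so(6+p)$ with $p\geq 2$ it exhibits explicit commuting elements $X=X_\m+X_\s$, $Y=Y_\m+Y_\s\in\m_1\oplus\s$ (built from rank-two matrices $E_{13}-E_{24}$, $E_{14}+E_{23}$ and suitable $\s$-parts) with $X_\m\wedge Y_\m\neq 0$, and for $\tilde\l=\so(7+p)$ with $p\geq 3$ it appeals to Proposition~\ref{prop:octonions} (the triple $G_2\subset Spin_0(7)\subset Spin(q+9)$, $q\geq 1$). Your proposal never uses~(\ref{eq:condition}) at this point, so the bounds $\tilde L=Spin(7)$ in the $SU(4)$ case and $\tilde L\in\{SO(8),Spin(8),Spin(9)\}$ in the $Spin(7)$ case are not established.

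Two smaller points. First, your claim that Corollary~\ref{cor:symmetric} gives $[\s_1,\s_1]\subset\n(\l)$ is a misattribution (that corollary only concerns $\s_2$), and the claim itself fails in the $SU(2)\cdot SU(2)'$ case, where Clebsch--Gordan allows brackets of odd-dimensional irreducibles to hit other summands of $\s_1$; fortunately your argument does not really need it, since any ideal containing $\l\oplus\s_1$ automatically has the form $\l\oplus\s_1\oplus\mathfrak a$. Second, ``reapplying Proposition~\ref{prop:stabilizer3} to $\tilde L$'' is not legitimate, since that proposition is about $L=\lb\m_1\rb$ attached to $K_0/H_0$, not about the enlarged group; the paper instead deduces the spin-type statements for $\tilde L$ on $\s_2$ by restricting to $L$ (a zero weight of a $\tilde L$-representation would restrict to a zero weight of $L$, contradicting Proposition~\ref{prop:stabilizer3}) and, for the $Spin(8)$ dichotomy, by analyzing the kernel $\Gamma\subset Z(Spin(8))=\Z_2\oplus\Z_2$ of the action on $\tilde V$ and using $\Gamma\cap Z(Spin(7))=0$. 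These fixes are routine, but the missing use of~(\ref{eq:condition}) to cap the size of $\tilde\l$ is an essential omission.
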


\begin{proof} If $\s_1 = 0$, then $\n_0 = \n(\l)$, hence $\l \lhd \n_0$, so that the first case holds. If $\s_1 \neq 0$, then according to Proposition~\ref{prop:stabilizer2} we have only few possibilities which we shall investigate now.

\bi
\item
$L = SU(2) \cdot SU(2)'$, and $[\su(2)', \s_1] = 0$ so that $\su(2)' \lhd \n_0$ and hence, $N_0 := \hat L \cdot SU(2)' \cdot \tilde N_0$ where $\hat L \subset N_0$ is the normal subgroup generated by $SU(2)$. Since $Z(SU(2))$ acts trivially on all irreducible $V_\al \subset \s_1$ as $V_\al$ is odd dimensional, $Z(SU(2)) \subset Z(\hat L)$ follows.

\item
If $L = SU(4)$, then $V_\al \cong \R^6$ with the standard representation of $SO(6) = SU(4)/\Z_2$. Since there is no $L$-equivariant map $\R^6 \ot \R^6 \ra \R^6$, it follows that $(\n_0, \n(\l))$ is a symmetric pair.

Therefore, we have a decomposition $\n_0 = \g_1 \oplus \ldots \oplus \g_k$ such that $\n(\l) = (\n(\l) \cap \g_1) \oplus \ldots \oplus (\n(\l) \cap \g_k)$, and such that $(\g_i, \g_i \cap \n(\l))$ is an {\em irreducible} symmetric pair. Since $\l \lhd \n(\l)$ is simple, it follows that it must be contained in one of these summands, say, $\l \subset \n(\l) \cap \g_1$. Therefore, if we let $\tilde \l := \g_1$ and $\tilde \n_0 := \g_2 \oplus \ldots \oplus \g_k$ and $\tilde L, \tilde N_0 \subset N_0$ be the corresponding normal subgroups, then $N_0 = \tilde L \cdot \tilde N_0$.

Consider now the irreducible symmetric pair $(\tilde \l, \n(\l) \cap \tilde \l)$. The isotropy group contains $\l = \so(6) = \su(4)$ as an ideal, whose isotropy representation is given by direct sums of the $6$-dimensional representation. From the classification of irreducible symmetric spaces (\cite{Hel}) it follows that $\tilde \l = \so(6 + p)$ and $\n(\l) \cap \tilde \l = \so(6) \oplus \so(p)$ for some $p \geq 1$.

Note that $\h_1 = \u(3) \subset \su(4) \cong \so(6)$ with the standard embedding, and pick a basis of $\R^6$ such that $\z(\u(3)) = \R (E_{12} + E_{34} + E_{56})$ where as before, $E_{rs}$ denotes the skew-symmetric matrix of rank two with $E_{rs} e_r = e_s$ and $E_{rs} e_s= -e_r$, where $(e_r)$ is the standard basis. If $p \geq 2$ so that $6+p \geq 8$, then we let
\[
X_\m := E_{13} - E_{24}, Y_\m := E_{14} + E_{23}, X_\s := \sqrt 2 (E_{17} + E_{38}), Y_\s := -\sqrt 2 (E_{27} + E_{48}).
\]
One easily verifies that $X_\m, Y_\m \in \so(6) \cap \u(3)^\perp = \m_1$, $X_\s, Y_\s \in \s$ and that for $X := X_\m + X_\s$ and $Y := Y_\m + Y_\s$ we have $[X, Y] = 0$, which is impossible according to (\ref{eq:condition}). Thus, we must have $p = 1$ and hence $\tilde \l \cong \so(7)$. Since the corresponding group $\tilde L$ must contain $L = Spin(6)$ as a subgroup, we cannot have $\tilde L \cong SO(7)$, so that $\tilde L \cong Spin(7)$ as claimed.

The representation of $\tilde L = Spin(7)$ on $\s_2$ must be of spin type. For if this was {\em not} the case, then this representation would have $0$ as a weight, hence so would its restriction to $L = Spin(6)$. However, $L$ acts on $\s_2$ of spin type which is a contradiction.

\item
If $L = Spin(7)$, then $V_\al \cong \R^7$ with the standard representation of $SO(7) = Spin(7)/\Z_2$. Arguing as in the previous case, we conclude that there is a normal subgroup $\tilde L \subset N_0$ with Lie algebra $\tilde \l = \so(7 + p)$ for some $p \geq 1$, and the inclusion $L \hookrightarrow \tilde L$ is the lift of the standard inclusion $SO(7) \subset SO(7+p)$. Thus, it follows from Proposition~\ref{prop:octonions} that $p \leq 2$.

If $p = 2$ then $\tilde \l = \so(9)$ hence $\tilde L = Spin(9)$ and in the notation of section~\ref{sec:octonions}, $L = Spin_0(7) \subset Spin(9)$. Since $Z(Spin(9)) = Z(Spin_0(7)) = \Z_2$,  the restriction of a representation of $Spin(9)$ to $Spin_0(7)$ is of spin type if and only if the representation of $Spin(9)$ itself is of spin type showing the claim in this case.

If $p = 1$ then $\tilde L$ is the quotient of $Spin(8)$ by a subgroup of $Z(Spin(8)) = \Z_2 \oplus \Z_2$. Let $\tilde V \subset \s_2$ be a $\tilde L$-irreducible subspace, and let $\Gamma \subset Z(Spin(8))$ be the kernel of the representation of $\tilde L$ on $\tilde V$. Since the restriction of this representation to $Spin(7)$ is of spin type, it follows that $\Gamma \cap Z(Spin(7)) = 0$. Therefore, $\Gamma \subsetneq Z(Spin(8))$, so that this representation cannot have $0$ as a weight. If $\Gamma = 0$, then the representation of $\tilde L = Spin(8)$ on $\s_2$ is of spin type. If $\Gamma \neq 0$, then we must have $\Gamma \cong \Z_2$, and this action is given by a representation of $SO(8) = Spin(8)/\Gamma$ which does not have $0$ as a weight. Moreover, the inclusion $Spin(7) \hookrightarrow SO(8) = Spin(8)/\Gamma$ is the spin representation. \ei
\vspace{-9mm}
\end{proof}

\begin{prop} \label{prop:symmetric-list}
Let $H \subset K \subset G$ and $L \subset K_0 \subset K$ be as in Proposition~\ref{prop:stabilizer2}. As before, let $N_0 \subset G$ be the connected subgroup with Lie algbera $\n(\l) \oplus \s_1$ and let $\tilde L \subset N_0$ be the normal subgroup from Proposition~\ref{prop:N0}. Furthermore, suppose that $L$ is not isomorphic to $SU(2) \cdot SU(2)$.

Then either $\tilde L \subset G$ is a normal subgroup, or $L = \tilde L$ and there is a normal subgroup $G_1 \subset G$ such that $L \subset N_0 \cap G_1$ is normal, and $G_1/(N_0 \cap G_1)$ is one of the entries of  Table~2.
\end{prop}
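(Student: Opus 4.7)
The plan is to exploit the symmetric pair structure $(\g,\n_0)$ from Corollary~\ref{cor:symmetric} and reduce to the classification of irreducible symmetric spaces. First I would decompose $\g$ into its center plus simple ideals, $\g = \z(\g)\oplus\g_1\oplus\cdots\oplus\g_m$, and recall that the involution $Ad_\sigma$ either preserves each simple ideal $\g_i$ or swaps two isomorphic ones; correspondingly, $(\g,\n_0)$ splits as an orthogonal product of irreducible symmetric pairs, where each pair is either of the form $(\g_i, \g_i\cap \n_0)$ (when $\g_i$ is $\sigma$-invariant) or of group type $(\g_i\oplus\g_j, \mathrm{diag})$ (when $\sigma$ swaps $\g_i,\g_j$).

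By hypothesis $L\not\cong SU(2)\cdot SU(2)$, so inspecting Proposition~\ref{prop:N0} shows that $\tilde\l$ is a simple Lie algebra in all remaining cases. Hence $\tilde\l$ is contained either in a single $\sigma$-invariant simple ideal $\g_1$, or in the $\sigma$-fixed diagonal of a pair of swapped isomorphic ideals $\g_1,\g_2$. In the latter (``group type'') situation the diagonal is itself a simple Lie subalgebra isomorphic to $\g_1$, and by simplicity $\tilde\l$ must coincide with it; this forces $L=\tilde L$ (since a strictly larger $\tilde L$ from cases 3--4 of Proposition~\ref{prop:N0} cannot embed diagonally in an $\sigma$-swapped pair while retaining its prescribed inner structure), and we take $G_1$ to be the normal subgroup of $G$ with Lie algebra $\g_1\oplus\g_2$. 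Then $G_1/(N_0\cap G_1)\cong\tilde L$, contributing the group-type entry of Table~2.

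In the single-ideal case I would split into sub-cases according to whether the symmetric pair $(\g_1, \g_1\cap \n_0)$ is trivial. If $\g_1\subset\n_0$, then normality of $\tilde\l$ in $\n_0$ yields $[\g_1,\tilde\l]\subset[\n_0,\tilde\l]\subset\tilde\l$, so $\tilde\l$ is an ideal of the simple Lie algebra $\g_1$; by simplicity $\tilde\l=\g_1$, whence $\tilde L$ is normal in $G$, giving the first alternative of the conclusion. If instead $(\g_1,\g_1\cap\n_0)$ is a nontrivial irreducible symmetric pair, then I would take $G_1$ to be the normal subgroup with Lie algebra $\g_1$; the inclusions $L\subset\tilde L\subset\g_1$ and $L\subset\n(\l)\subset\n_0$ give $L\subset N_0\cap G_1$, and normality of $L$ in $N_0\cap G_1$ is inherited from normality of $L$ in $\n_0$.

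The hard part is showing that the only such pairs $(\g_1,\g_1\cap\n_0)$ that can occur are exactly those tabulated in Table~2, and that along with them one must have $L=\tilde L$. The strategy is to use Berger's classification of irreducible symmetric pairs: among them only a restricted list has isotropy algebra containing a simple ideal isomorphic to one of the candidates for $\l$ in Table~1, acting on the tangent space $\s_2\cap\g_1$ by a sum of representations compatible with Propositions~\ref{prop:stabilizer2}--\ref{prop:N0}. Going through the entries of Table~1 for $\l$ (i.e.\ $\so(n)$, $\su(m)$, $\sp(m)$, $\g_2$, $\spin(7)$, $\spin(9)$) and matching against the Cartan classification produces the finite list in Table~2, and simultaneously forces $\tilde L = L$ in each surviving case (because any strictly larger $\tilde L$ of Proposition~\ref{prop:N0} would, by the same matching, already be exhausted by the normal-in-$G$ alternative already handled). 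This case-by-case compatibility check against Berger's list, together with the need to rule out enlargements of $L$ inside $\g_1\cap\n_0$, is the main combinatorial obstacle.
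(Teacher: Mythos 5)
Your overall strategy---use the symmetric pair $(\g,\n_0)$ from Corollary~\ref{cor:symmetric}, split it into irreducible factors, locate the simple algebra $\tilde\l$ in a single factor, and match against the classification of irreducible symmetric spaces---is the same as the paper's, but two steps do not work as written. First, your treatment of the group-type factors is incorrect: when $\tilde\l$ lies in the diagonal of two swapped simple ideals you claim the quotient $G_1/(N_0\cap G_1)\cong\tilde L$ contributes ``the group-type entry of Table~2'', but Table~2 contains no group-type entry, so this case has to be \emph{excluded}, not included. The exclusion comes from Proposition~\ref{prop:stabilizer3}: there $L\cong Spin(n)$ and the kernel $\Z_2$ of $Spin(n)\to SO(n)$ acts as $-Id$ on every $V_\al\subset\s_2$, whereas in the group-type situation $\s_2\cap(\g_1\oplus\g_2)$ is the adjoint representation of $\tilde\l$, on which the center acts trivially (and which has $0$ as a weight). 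Without this observation your argument does not yield the stated conclusion; more generally, the spin-type condition of Proposition~\ref{prop:stabilizer3} is precisely what makes the matching with the classification produce exactly Table~2, and you only gesture at it.

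Second, and more seriously, you never actually dispose of the alternative $\s_2\neq 0$ with $L\subsetneq\tilde L$. Your parenthetical claim that a strictly larger $\tilde L$ ``would already be exhausted by the normal-in-$G$ alternative'' is false: by Proposition~\ref{prop:N0} the surviving candidates are $\tilde L=Spin(9)$ acting on $\s_2$ by a spin representation, which by the classification forces $G_1=F_4$, i.e.\ the triple $G_2\subset Spin_0(7)\subset F_4$, and $\tilde L\in\{SO(8),Spin(8)\}$ acting by a representation of $SO(8)$ without zero weight, which forces $G_1/N_1=SO(p+9)/(SO(8)\cdot SO(p+1))$, i.e.\ the triples $G_2\subset Spin_\pm(7)\subset Spin(p+9)$. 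In neither case is $\tilde L$ normal in $G$; these cases are eliminated only because Proposition~\ref{prop:octonions} exhibits explicit commuting pairs with linearly independent $\m$-components, violating condition~(\ref{eq:condition}) for exactly these triples. This appeal to the octonionic counterexamples is an essential ingredient of the proof that your proposal omits; without it the dichotomy ``$\tilde L$ normal in $G$ or $L=\tilde L$ with $G_1/(N_0\cap G_1)$ in Table~2'' is not established.
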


\begin{table}
{\footnotesize
\begin{center}
\begin{tabular}{|c|c|c|c|}
\multicolumn{4}{c}{{\bf Table~2}}\\
\multicolumn{4}{c}{{\sc List of irreducible symmetric spaces $G_1/(Spin(n) \cdot N')$ with spin type isotropy}}
\vspace{2 mm}\\
\hline
&&&\vspace{-3mm}\\
& $L \cong Spin(n)$ & $G_1/(Spin(n) \cdot N')$ & isotropy representation of $L$\\
&&&\vspace{-3mm}\\
\hline
&&&\vspace{-3mm}\\
1 & $SU(2) \cong Spin(3)$ & $SU(p + 2)/S(U(2) \cdot U(p))$, $p \geq 1$ & $\C^2$ (standard repr.)  \\
&&&\vspace{-3mm}\\
\hline
&&&\vspace{-3mm}\\
2 & $SU(2) \cong Spin(3)$ & $\ba{c}SO(p+4)/(SO(4) \cdot SO(p))$, $p \geq 1\\ SU(2) \hookrightarrow SO(4) \times 1 \ea$ & $\C^2$ (standard repr.)  \\
&&&\vspace{-3mm}\\
\hline
&&&\vspace{-3mm}\\
3 & $Sp(1) \cong Spin(3)$ & $Sp(p + 1)/(Sp(1) \cdot Sp(p))$, $p \geq 1$ & $\C^2$ (standard repr.)  \\
&&&\vspace{-3mm}\\
\hline
&&&\vspace{-3mm}\\
4 & $SU(2) \cong Spin(3)$ & $\ba{c}G_2/SO(4)\\ÊL = SU(2)_1 \subset SO(4)\ea$ & $\C^2$ (standard repr.)  \\
&&&\vspace{-3mm}\\
\hline
&&&\vspace{-3mm}\\
5 & $SU(2) \cong Spin(3)$ & $\ba{c}G_2/SO(4)\\ÊL = SU(2)_3 \subset SO(4)\ea$ & $\C^4 \cong \odot^3(\C^2)$  \\
&&&\vspace{-3mm}\\
\hline
&&&\vspace{-3mm}\\
6 & $SU(2) \cong Spin(3)$ & $F_4/(SU(2) \cdot Sp(3))$ & $\C^2$ (standard repr.)  \\
&&&\vspace{-3mm}\\
\hline
&&&\vspace{-3mm}\\
7 & $SU(2) \cong Spin(3)$ & $E_6/(SU(2) \cdot SU(6))$ & $\C^2$ (standard repr.)  \\
&&&\vspace{-3mm}\\
\hline
&&&\vspace{-3mm}\\
8 & $SU(2) \cong Spin(3)$ & $E_7/(SU(2) \cdot Spin(12))$ & $\C^2$ (standard repr.)  \\
&&&\vspace{-3mm}\\
\hline
&&&\vspace{-3mm}\\
9 & $SU(2) \cong Spin(3)$ & $E_8/(SU(2) \cdot E_7)$ & $\C^2$ (standard repr.)  \\
&&&\vspace{-3mm}\\
\hline
&&&\vspace{-3mm}\\
10 & $Sp(2) \cong Spin(5)$ & $Sp(p + 2)/(Sp(2) \cdot Sp(p))$, $p \geq 1$ & $\H^2$ (standard repr.)  \\
&&&\vspace{-3mm}\\
\hline
&&&\vspace{-3mm}\\
11 & $SU(4) \cong Spin(6)$ & $SU(p + 4)/S(U(4) \cdot U(p))$, $p \geq 1$ & $\Delta_6 \cong \C^4$ (spin repr.)  \\
&&&\vspace{-3mm}\\
\hline
&&&\vspace{-3mm}\\
12 & $Spin(8)$ & $Spin(p + 9)/(Spin(8) \cdot Spin(p+1))$, $p \geq 0$ & $\Delta_8 \cong \R^8$ (spin repr.)  \\
&&&\vspace{-3mm}\\
\hline
&&&\vspace{-3mm}\\
13 & $Spin(9)$ & $F_4/Spin(9)$ & $\Delta_9$ (spin repr.)  \\
&&&\vspace{-3mm}\\
\hline
&&&\vspace{-3mm}\\
14 & $Spin(10)$ & $E_6/(Spin(10) \cdot U(1))$ & $\Delta_{10}^+$ (spin repr.)  \\
&&&\vspace{-3mm}\\
\hline
&&&\vspace{-3mm}\\
15 & $Spin(12)$ & $E_7/(Spin(12) \cdot SU(2))$ & $\Delta_{12}^+$ (spin repr.)  \\
&&&\vspace{-3mm}\\
\hline
&&&\vspace{-3mm}\\
16 & $Spin(16)$ & $E_8/Spin(16)$ & $\Delta_{16}^+$ (spin repr.)  \\
\hline
\end{tabular}
\end{center}}
\end{table}

\begin{proof}
If $\s_2 = 0$ then $G = N_0$ and hence, $\tilde L \subset N_0 = G$ is a normal subgroup by Proposition~\ref{prop:N0}.

Let us suppose that $\s_2 \neq 0$ and $\tilde L = L$. By Proposition~\ref{prop:stabilizer3}, it follows that $L = Spin(n)$ for some $n$, and $L$ acts on $\s_2$ of spin type. Since we excluded the case $n = 4$, $L = \tilde L$ is {\em simple}. Thus, if we decompose the symmetric space $G/N_0$ from Corollary~\ref{cor:symmetric} as
\[
G = G_1 \cdot \ldots \cdot G_k, \mbox{ } N_0 := N_1 \cdot \ldots \cdot N_k, \mbox{ with $N_i = G_i \cap N$},
\]
such that $G_i/N_i$ are {\em irreducible} symmetric spaces, then we may assume w.l.o.g. that $L \subset N_1$ is normal, so that $N_1 = L \cdot N'$ for some normal subgroup $N' \subset N_1$. That is, $G_1/N_1 = G_1/(Spin(n) \cdot N')$ is an irreducible symmetric space such that the restriction of the isotropy representation to $Spin(n)$ is of spin type. From the classification of irreducible symmetric spaces (\cite{Hel}), we conclude that $G_1/N_1$ must be an entry of Table~2.

It remains to exclude the case $\s_2 \neq 0$ and $L \subsetneq \tilde L$. By Proposition~\ref{prop:N0}, it follows again that $\tilde L$ is {\em simple} as we excluded the case $L = SU(2) \cdot SU(2)$, so that as before we may assume that there is a normal subgroup $G_1 \subset G$ such that $G_1/N_1$ is an irreducible symmetric space, where $N_1 = \tilde L \cdot N'$. In particular, it follows that all $\tilde L$-irreducible subspaces of $\tilde V \subset \s_2$ are equivalent.

Let us work through the possibilities for $\tilde L$ given in Proposition~\ref{prop:N0}. If $\tilde L = Spin(n)$ with $n \in \{7,8,9\}$ acts on $\s_2$ of spin type, then $G_1/N_1 = G_1/(Spin(n) \cdot N')$ must be an entry of Table~2, whence the only possibility is $n = 9$ and $G_1 = F_4$. However, the triple $(H_0 \subset K_0 \subset G) = (G_2 \subset Spin_0(7) \subset F_4)$ was shown not to satisfy (\ref{eq:condition}) in Proposition~\ref{prop:octonions}, so that this case is impossible.

Finally, if $\tilde L \in \{SO(8), Spin(8)\}$ acts on $\s_2$ as (the lift of) a representation of $SO(8)$ which does not have $0$ as a weight, then again, the classification of irreducible symmetric spaces implies that $G_1/N_1 = SO(p + 9)/(SO(8) \cdot SO(p+1))$ for some $p \geq 0$, corresponding to the triple $(H_0 \subset K_0 \subset G) = (G_2 \subset Spin_\pm(7) \subset SO(p + 9))$ which was also shown not to satisfy (\ref{eq:condition}) in Proposition~\ref{prop:octonions}, so the proof is completed.
\end{proof}

\begin{prop} \label{prop:list}
Let $H \subset K \subset G$ and $L \subset K_0 \subset K$, and $N_0 \subset G$ be as in Proposition~\ref{prop:symmetric-list} and assume that $L = \tilde L$. Furthermore, let $G_1/(N_0 \cap G_1)$ be the irreducible symmetric space from that proposition. Then one of the following cases holds.
\bi
\item
$L = Sp(1) \subset Sp(p+1)$ and $G_1/(N_0 \cap G_1) = Sp(p+1)/(Sp(1) \cdot Sp(p))$ for some $p \geq 1$,
\item
$L = SU(2)_3 \subset G_2$ and $G_1/(N_0 \cap G_1) = G_2/SO(4)$,
\item
$L = Spin(6) \cong SU(4) \subset SU(5)$, and $G_1/(N_0 \cap G_1) = SU(5)/U(4)$,
\item
$L = Spin(8) \subset Spin(p + 9)$ for $p \in \{0, 1, 2\}$, and $G_1/(N_0 \cap G_1) = Spin(p+9)/(Spin(8) \cdot Spin(p + 1))$, and the action of $K_0 = L$ on $\R^8$ is the spinor representation.
\ei
That is,  of Table~2 only the entries 3, 5, 11 for $p = 1$ and 12 for $p \in \{0, 1, 2\}$ can occur.
\end{prop}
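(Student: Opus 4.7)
My plan is to go through the 16 entries of Table~2 in turn. For entries 3 and 5, entry 11 with $p=1$, and entry 12 with $p\in\{0,1,2\}$, compatibility with (\ref{eq:condition}) was already verified in Section~\ref{S:constructions}, as these triples give precisely the rank 3, 4, 6 and 8 examples of Theorems~\ref{thm:rank-3}--\ref{thm:rank-8}. For each of the remaining twelve entries, the strategy is to exhibit explicit elements $X = X_\m + X_\s, Y = Y_\m + Y_\s \in \m_1 \oplus \s$ with $[X, Y] = 0$ yet $X_\m \w Y_\m \neq 0$, contradicting condition~(\ref{eq:condition}) of Theorem~\ref{thm:necessary}.

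For entries 1, 2, 10, and 11 with $p \geq 2$ (the classical cases in which $L$ acts on $\s_2$ by its defining representation with multiplicity $\geq 2$), the commuting pair is produced by direct matrix computation. For example, in entry 11 with $p \geq 2$, writing $X_{ij} := E_{ij} - E_{ji} \in \su(p+4)$, the pair
\[
X := X_{14} + X_{26}, \qquad Y := X_{24} + X_{16}
\]
satisfies $[X, Y] = 0$ (one computes $[X_{14}, X_{24}] = -X_{12}$, $[X_{26}, X_{16}] = X_{12}$, $[X_{14}, X_{16}] = -X_{46}$, $[X_{26}, X_{24}] = X_{46}$, and these cancel in pairs), while $X_\m = X_{14}$ and $Y_\m = X_{24}$ are linearly independent in $\m_1 \cong \C^3$. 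Analogous block-matrix constructions dispose of entries 1 and 2 (inside $\su(p+2)$ and $\so(p+4)$ respectively) and entry 10 (inside $\sp(p+2)$).

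Entry 12 with $p \geq 3$ yields exactly the triple $Spin_\pm(7) \subset Spin(8) \subset Spin(p+9)$, ruled out by Proposition~\ref{prop:octonions}(4). For the exceptional entries 4, 6--9 and 13--16 (whose ambient groups are $G_2, F_4, E_6, E_7, E_8$), I would use the known root-space decompositions and the embeddings of $L$ inside each exceptional Lie algebra to reduce the triple either (a) to a classical configuration handled by the matrix constructions above, or (b) to one of the triples already excluded by Proposition~\ref{prop:octonions}. In particular, entry 13 corresponds to $Spin_0(7) \subset Spin(9) \subset F_4$: using the decomposition ${\frak f}_4 = \so_0(9) \oplus \Delta_9$ together with the triality argument of Proposition~\ref{prop:octonions}(3) yields commuting pairs. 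Entries 14--16 contain $Spin_\pm(7) \subset Spin(8)$ as a subconfiguration inside $Spin(10), Spin(12), Spin(16)$ respectively, so they inherit the obstruction from Proposition~\ref{prop:octonions}(4).

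The main obstacle is the uniform treatment of the exceptional entries 4, 6--9 and 13--16, which requires careful bookkeeping of the root systems of $F_4, E_6, E_7, E_8$ and the specific embeddings $L \hookrightarrow G_1$. The underlying structural reason for failure is uniform, however: in each excluded case, the symmetric space $G_1/(N_0 \cap G_1)$ carries a commuting pair of root vectors that, together with suitable elements of $\m_1$, yields the desired violation of (\ref{eq:condition}).
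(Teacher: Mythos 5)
Your overall strategy (exhibit commuting pairs in $\m_1\oplus\s$ with independent $\m$-components to violate (\ref{eq:condition}) for every entry of Table~2 other than 3, 5, 11 with $p=1$ and 12 with $p\in\{0,1,2\}$) is the right one, and your treatment of entry 12 for $p\geq 3$ via Proposition~\ref{prop:octonions} matches the paper. But the proposal leaves the hardest cases unproved and, where it sketches them, the reductions do not work. For the entries with $L=SU(2)$ inside $G_2, F_4, E_6, E_7, E_8$ (entries 4, 6--9) you only say you ``would'' reduce to a classical configuration or to Proposition~\ref{prop:octonions}; the latter concerns the octonionic triples with $K=Spin(7)$ or $Spin(8)$ and has no bearing on these $SU(2)$ cases. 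The missing idea, which is the heart of the paper's argument and handles entries 1, 2, 4, 6--9 uniformly, is this: in each of these cases $L=SU(2)$ is generated by a \emph{long} root $\al$ of a simple $\g_1$ not of type $C_n$, so there is a long root $\be$ with $\lb\al,\be\rb=1$, and the two root spaces generate a subgroup $SU(3)\supset L$ with $Norm_{SU(3)}(L)=U(2)$; then the explicit matrices (\ref{eq:couterexample}) lie in $\m_1\oplus\s$, commute, and have independent $\m$-components. This also explains why entries 3 and 5 (where the relevant root is long in type $C_n$, resp.\ short in $G_2$) survive --- a distinction your proposal never confronts.

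The reductions you propose for entries 13--16 are not valid. For entry 13 the triple is $Spin(7)\subset Spin(9)\subset F_4$ with $\m_1=\so(8)^\perp\cap\so(9)$, not the triple $G_2\subset Spin_0(7)\subset F_4$ of Proposition~\ref{prop:octonions}; since $K$ and $H$ differ, a violation for one triple does not transfer to the other. The paper instead argues indirectly: by Berger's classification the normal homogeneous metric on $F_4/Spin(8)$ is not positively curved, so commuting pairs in $\m_1\oplus\s$ exist, and using that $F_4/Spin(9)$ is a rank-one symmetric space and that nonzero elements of $\m_1$ act on $\s$ by isomorphisms, such a pair necessarily has $X_\m\w Y_\m\neq 0$. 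For entries 14--16 the triples are $Spin(2k-1)\subset Spin(2k)\subset E_r$; condition (\ref{eq:condition}) is inherited only when $G$ is enlarged with $H\subset K$ fixed (this is the monotonicity used in Proposition~\ref{prop:octonions}), not when one passes to a ``subconfiguration'' with smaller $K$ and $H$, because $\m$ and $\s$ change entirely (and in the $E_6$ case the ambient $Spin(p+9)$, $p\geq 3$, required by Proposition~\ref{prop:octonions} is not even present). The paper handles these by explicit root/weight computations inside ${\frak e}_6,{\frak e}_7,{\frak e}_8$ producing a subgroup $G'\cong SU(3)\cdot SU(3)$ with $K_0\cap G'$ equal to $SU(2)\cdot SU(2)$ up to center and $H_0\cap G'$ containing $\triangle SU(2)$, and then uses the diagonal pair $\hat X=(X,-X)$, $\hat Y=(Y,-Y)$ built from (\ref{eq:couterexample}); the same device is also needed for entry 11 with $p\geq 2$ in the case $H_0=Sp(2)$, which your block-matrix pair (tailored to $\m_1\cong\C^3$, i.e.\ $H_0=SU(3)$) does not cover. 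So as it stands the proposal has genuine gaps precisely in the cases that require the real work.
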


\begin{proof}
In order to show the proposition, we have to exclude all but these possibilities in Table~2.
\bi
\item Entries 1, 2, 4, 6, 7, 8, 9 of Table~2

In this case, $G_1$ is a compact simple Lie group whose Lie algebra does not have type $C_n$ in the classification of Dynkin diagrams, and $L = SU(2)$ is the subgroup generated by a long root $\al$. By our assumption, there is a long root $\beta$ with $\lb \al, \beta \rb = 1$, so that the root spaces of $\al$ and $\beta$ generate a subgroup of $G_1$ which is isomorphic to $SU(3)$ and contains $L = SU(2)$ as subgroup. Since the normalizer of $L \subset SU(3)$ is $U(2)$, it follows that $\s \cap \su(3) \subset \s$ containes the orthogonal complement of $\u(2) \subset \su(3)$. If $\h_0 \subset \l$ is non-trivial, then it is one-dimensional and w.l.o.g. is spanned by $diag(i, -i, 0) \in \su(2) \subset \su(3)$. Therefore, the elements
\be \label{eq:couterexample}
X := \left(\ba{rr|r} 0 & 1 & 1\\ -1 & 0 & 1\\ \hline - 1 & - 1 & 0 \ea \right) \mbox{ and } Y := \left(\ba{rr|r} 0 & i & - i\\ i & 0 & i\\ \hline -i & i & 0 \ea \right)
\ee
are contained in $\m_1 \oplus \s$ and satisfy $[X, Y] = 0$, where \[ X_\m := \left(\ba{rl} 0 & 1\\ -1 & 0 \ea \right), \mbox{ and } Y_\m := \left(\ba{rl} 0 & i\\ i & 0 \ea \right) \]
are independent elements of $\m_1 \subset \l$, contradicting (\ref{eq:condition}).

\item Entry 10 of Table~2: $G_1 = Sp(p+2), L = Sp(2)$.

In this case, $L$ acts either on $S^7$ in which case $\h_0 = \sp(1) \subset \sp(2)$, or on $S^4$ in which case $\h_0 = \sp(1) \oplus \sp(1) \subset \sp(2)$. In either case, for $p = 1$ we can regard the matrices $X, Y$ from (\ref{eq:couterexample}) as elements of $\m_1 \oplus \s \subset \sp(3)$ with $[X, Y] = 0$ and $X_\m \w Y_\m \neq 0$, violating (\ref{eq:condition}). Using the embedding $\sp(3) \hookrightarrow \sp(p+2)$ for $p > 1$ we can rule out this case as well.

\item \label{case-11} Entry 11 of Table~2: $G_1 = SU(p+4), L = SU(4)$.

In this case, $L$ acts either on $S^7$ so that $H_0 = SU(3)$, or it acts on $S^5$ so that $H_0 = Sp(2)$. In the first case, the triple $H_0 \subset K_0 \subset G$ is given as $SU(3) \subset SU(4) \subset SU(p + 4)$. Then we can choose a subgroup $G' \cong SU(3) \subset SU(p + 4)$ such that $H_0 \cap G' = 1$ and $K_0 \cap G' = SU(2)$. Then the elements $X, Y \in \su(3) \cong \g' \subset \g$ from (\ref{eq:couterexample}) show that  (\ref{eq:condition}) is violated.

In the second case, the triple $H_0 \subset K_0 \subset G_1$ is given by $Sp(2) \subset SU(4) \subset SU(p + 4)$. We assert that for $p \geq 2$, $G_1$ contains a subgroup $G' = SU(3) \cdot SU(3)$ such that $K_0 \cap G' = SU(2) \cdot SU(2)$ and $H_0 \cap G' = \triangle SU(2)$. Once this is shown, we can pick the pairs $\hat X := (X, -X), \hat Y := (Y, -Y) \in \g'$ with $X, Y \in \su(3)$ from (\ref{eq:couterexample}), so that $\hat X, \hat Y \in (\h_0 \oplus \z(\h_0))^\perp \subset \m \oplus \s$, satisfy $[\hat X, \hat Y] = 0$ and $\hat X_\m = (X_\m, -X_\m), \hat Y_\m = (Y_\m, -Y_\m)$ are linearly independent, contradicting (\ref{eq:condition}).

To see the existence of $G'$, choose a complex orthonormal basis $\{e_i, f_i, g_r \mid i = 1,2, r = 1, \ldots p\}$ of $\C^{p+4}$ such that $\su(4) \subset \su(p+4)$ is the stabilizer of $span(g_r)$, and $\sp(2) \subset \su(4)$ is the stabilizer of the quaternionic structure $J: span(e_i, f_i) \ra span(e_i, f_i)$ defined as the antilinear map with $J e_i = (-1)^{i+1}e_{i+1}$ and $J f_i = (-1)^{i+1} f_{i+1}$, taking indices mod $2$. If we now define
\[
\g' := \su(3) \oplus \su(3) \subset stab( span(e_1, f_1, g_1) \oplus span(e_2, f_2, g_2)),
\]
then the properties $\su(4) \cap \g' = \su(2) \oplus \su(2)$ and $\sp(2) \cap \g' = \triangle \su(2) \subset \su(2) \oplus \su(2)$ are easily verified.

\item Entry 12 of Table~2: $G_1 = Spin(p+8), L = Spin(8)$.

In this case, the inclusion $H_0 \subset K_0 \subset G_1$ is given by $Spin_\pm(7) \subset Spin(8) \subset Spin(p + 9)$, which violates (\ref{eq:condition}) for $p \geq 3$ by Proposition~\ref{prop:octonions}.

\item Entry 13 of Table~2: $G_1 = F_4, L = Spin(9) \subset F_4$.

In this case, $\m_1 = \so(8)^\perp \cap \so(9)$. If there were no linearly independent $X, Y \in \m_1 \oplus(\s \cap {\frak f}_4)$ with $[X, Y] = 0$, then the normal homogeneous metric on $F_4/Spin(8)$ would have positive curvature. However, by the classification of these spaces from \cite{Ber} this is not the case, so that we must have such elements.

Suppose that for all commuting elements $X, Y \in \m_1 \oplus (\s \cap {\frak f}_4)$ we have $X_\m \w Y_\m = 0$ so that we may assume w.l.o.g. that $Y_\m = 0$. Then $0 = [X, Y] = [X_\m, Y_\s] + [X_\s, Y_\s]$, and since $F_4/Spin(9)$ is a symmetric space, so that $[X_\s, Y_\s] \in \so(9)$, whereas $[X_\m, Y_\s] \in \s \subset \so(9)^\perp$ this implies that $[X_\m, Y_\s] = [X_\s, Y_\s] = 0$. But the non-zero elements of $\m_1 = \so(8)^\perp \subset \so(9)$ act  on $\s$ by isomorphisms, hence $[X_\m, Y_\s] = 0$ and $Y_\s \neq 0$ implies that $X_\m = 0$. Furthermore, $F_4/Spin(9)$ is a symmetric space of rank one, hence $[X_\s, Y_\s] = 0$ implies that $X_\s, Y_\s$ are linearly dependent which is impossible.

Therefore, we must have $[X, Y]$ and $X_\m \w Y_\m \neq 0$ which contradicts (\ref{eq:condition}).

\item Entries 14, 15, 16 of Table~2: $G_1 = E_r, L = K_0 = Spin(2k)$, $H_0 = Spin(2k-1)$, where $(r, k) \in \{ (6, 5), (7, 6), (8, 8)\}$.

We assert that in all three cases, $G_1$ contains a subgroup $G' \cong SU(3) \cdot SU(3)$ such that $SU(2) \cdot SU(2) \subset K_0 \cap G'  \subset U(2) \cdot U(2)$, and $\triangle SU(2) \subset H_0 \cap G' \subset \triangle SU(2) \cdot Z(U(2) \cdot U(2))$. This will be sufficient for our purposes: we choose $\hat X = (X, -X), \hat Y = (Y, -Y) \in \m \oplus \s$ as in case~\ref{case-11} above to derive a contradiction to (\ref{eq:condition}).

In order to see the existence of $G' \subset G_1$, we fix the orthonormal basis $\th_1, \ldots, \th_r$ of the maximal torus of the Lie algebra ${\frak e}_r$ such that
\bi
\item[(i)] the Lie algebra $\so(2k) \subset {\frak e}_r$ has weights $\pm \th_i \pm \th_j$, $1 \leq i < j \leq k$,
\item[(ii)] the weights of the isotropy representation of $Spin(2k)$ are $\frac12 (\eps_1 \th_1 + \ldots + \eps_r \th_k)$, where $\eps_i = \pm 1$ such that $\eps_1 \ldots \eps_k = 1$,
\item[(iii)] $\h_0 = \so(2k-1) \subset \so(2k)$ is the Lie algebra which stabilizes $\th_1$.
\ei
Now we proceed by investigating the three cases separately.

\bi
\item
Suppose $(r, k) = (6, 5)$, corresponding to the Hermitean symmetric space \linebreak $E_6/Spin(10) \cdot U(1)$. Then $(\so(10) \oplus \u(1))^\perp$ is a complex representation of $\so(10) \oplus \u(1)$, and the complex one-dimensional weight spaces $V_\la \subset (\so(10) \oplus \u(1))^\perp$ for $\la = \frac12 (\eps_1 \th_1 + \ldots + \eps_r \th_k)$ are well defined. Moreover, $[V_\la, V_{\la'}] \neq 0$ if and only if $\la - \la'$ is a root of $\so(10)$; in this case, $[V_\la, V_{\la'}] = \g_{\pm(\la - \la')} \oplus \t^1$ for a one dimensional Lie algebra $\t^1 \subset span(\la, \la') \oplus \u(1)$, regarding $\la, \la'$ as elements of the maximal torus of ${\frak e}_r$. We define the weights
\[ 
\la_{1/2} := \frac12 (\pm(\th_1 + \th_2) + \th_3 + \th_4 + \th_5), \mbox{ and } \mu_{1/2} := \frac12 (\pm(\th_1 - \th_2) - \th_3 - \th_4 - \th_5).
\]
Since $\la_i - \mu_j$ is not a root of $\so(10)$, we have $[V_{\la_1} \oplus V_{\la_2}, V_{\mu_1} \oplus V_{\mu_2}] = 0$. Moreover, $\la_1 - \la_2 = \th_1 + \th_2$, and $\mu_1 - \mu_2 = \th_1 - \th_2$, so that the Lie algebras $\g'_1$ and $\g'_2$ generated by $V_{\la_1} \oplus V_{\la_2}$ and $V_{\mu_1} \oplus V_{\mu_2}$, respectively, satisfy $\g'_1 \cap (\so(10) \oplus \u(1)) = \lb \g_{\pm(\th_1 + \th_2)} \rb \oplus \t^1_1$ and  $\g'_2 \cap (\so(10) \oplus \u(1)) = \lb \g_{\pm(\th_1 - \th_2)}\rb \oplus \t^1_2$, i.e., both are isomorphic to $\u(2)$ and act on $V_{\la_1} \oplus V_{\la_2}$ and $V_{\mu_1} \oplus V_{\mu_2}$, respectively, via the standard representation on $\C^2$. Thus, we have the following:

\bi
\item
$(\g_i', \g_i' \cap (\so(10) \oplus \u(1)))$ is a symmetric pair congruent to $(\su(3), \u(2))$,
\item
$[\g_1', \g_2'] = 0$, i.e., $\g' := \g_1' \oplus \g_2' \cong \su(3) \oplus \su(3)$
\item
$\g' \cap (\so(10) \oplus \u(1)) = \so(4) \oplus \u(1)$ so that $\k_0 \cap \g' \cong \so(4)$, which is included into $\so(10)$ by the standard embedding, and into $\g'$ as $\k_0 \cap \g' \cong \su(2) \oplus \su(2) \hookrightarrow \su(3) \oplus \su(3)$.
\ei

Furthermore, since $\h_0 \subset \so(10)$ is the Lie algebra which stabilizes $\th_i$, it follows that $\h_0 \cap \g' \subset \k_0 \cap \g'$ is the standard inclusion $\so(3) \subset \so(4)$ which corresponds to $\triangle \su(2) \subset \su(2) \oplus \su(2)$ as asserted.

\item
Suppose $(r, k) = (7, 6)$, corresponding to the quaternionic symmetric space \linebreak $E_7/Spin(12) \cdot Sp(1)$.
Then $(\so(12) \oplus \sp(1))^\perp$ is a quaternionic representation of $\so(12) \oplus \sp(1)$, and since $-\la$ is a weight whenever $\la$ is, the weight space $W_\la := (V_\la \oplus V_{-\la}) \cap \g \subset (\so(12) \oplus \sp(1))^\perp$ for $\la = \frac12 (\eps_1 \th_1 + \ldots + \eps_r \th_6)$ is well defined as a one-dimensional quaternionic vector space, and $[W_\la, W_{\la'}] \neq 0$ if and only if $\la \pm \la'$ is a root of $\so(12)$; moreover, in this case, $[W_\la, W_{\la'}] = \g_{\la \pm \la'} \oplus \t^1 \oplus \sp(1)$ with $\t^1 = span(\la, \la') \cap (\la \pm \la')^\perp$. We define the Lie algebra $\g'_1 \subset {\frak e}_7$ generated by $W := W_{\la_1} \oplus \ldots \oplus W_{\la_4}$, where
\[ 
\la_i := \frac12 (\eps_1 \th_1 + \eps_2 \th_2 + \eps_3 \th_3 + \th_4 + \th_5 + \th_6), \mbox{ where } \eps_k = \pm 1 \mbox{ and } \eps_1 \eps_2 \eps_3 = 1.
\]
Since $\la_i + \la_j$ is never a root of $\so(12)$, we have $\g_1' \cap (\so(12) \oplus \sp(1)) = [W, W] = \lb \g_{\pm \th_i \pm \th_j}, 1 \leq i < j \leq 3 \rb \oplus \t^1 \oplus \sp(1) \cong \so(6) \oplus \t^1 \oplus \sp(1) \cong \su(4) \oplus \u(2)$ which acts on $W$ via the tensor representation $\C^4 \ot \C^2$. Since $(\g_1', \g_1' \cap (\so(12) \oplus \sp(1)))$ is a symmetric pair, it follows that $\g_1' \cong \su(6)$ with the inclusion $\g_1' \cap (\so(12) \oplus \sp(1)) = \s(\u(4) \oplus \u(2)) \subset \su(6)$ and hence, $\k_0 \cap \g_1' = \u(4) \subset \su(6)$.

Now $\h_0 \cap \g_1' \subset \u(4)$ is the stabilizer of an element of the central extension of the representation of $\su(4) \cong \so(6)$ on $\R^6$. Thus, $\h_0 \cap \g_1' \cong \t^1 \oplus \so(5) \cong \t^1 \oplus \sp(2)$ which is embedded in the canonical way as $\t^1 \oplus \sp(2) \subset \t^1 \oplus \su(4)$.

Thus, we have a subgroup $\g_1' \cong \su(6)$ with $\k_0 \cap \g_1' = \u(4)$ and $\h_0 \cap \g_1' \cong \t^1 \oplus \sp(2) \subset \u(4)$, so that the existence of $\g' \cong \su(3) \oplus \su(3) \subset \g_1'$ with the asserted properties follows as in case~\ref{case-11}.      

\item
Suppose $(r, k) = (8, 8)$, corresponding to the real symmetric space $E_8/Spin(16)$. Then $(\so(16))^\perp$ is a real representation of $\so(16)$, and since $-\la$ is a weight whenever $\la$ is, the weight space $W_\la := (V_\la \oplus V_{-\la}) \cap \g \subset (\so(16))^\perp$ for $\la = \frac12 (\eps_1 \th_1 + \ldots + \eps_r \th_k)$ is well defined as a two-dimensional real vector space, and $[W_\la, W_{\la'}] \neq 0$ if and only if $\la \pm \la'$ is a root of $\so(16)$; moreover, in this case, $[W_\la, W_{\la'}] = \g_{\pm \la \pm \la'} \oplus \t^1$ where $\t^1 \subset span(\la, \la')$. We define the weights
\[
\la_{1/2} := \frac12 (\pm(\th_1 + \th_2) + \th_3 + \ldots + \th_8), \mu_{1/2} := \frac12 (\pm(\th_1 - \th_2) + \th_3 + \th_4 + \th_5 - \th_6 - \th_7 - \th_8).
\]
Since $\la_i \pm \mu_j$ is not a root of $\so(16)$, we have, $[W_{\la_1} \oplus W_{\la_2}, W_{\mu_1} \oplus W_{\mu_2}] = 0$, and the Lie algebras $\g'_1$ and $\g'_2$ generated by $W_{\la_1} \oplus W_{\la_2}$ and $W_{\mu_1} \oplus W_{\mu_2}$, respectively, satisfy $\g'_1 \cap \so(16) = \g_{\pm(\th_1 + \th_2)} \oplus \t^1_1 \cong \u(2)$ and  $\g'_2 \cap \so(16) = \g_{\pm(\th_1 - \th_2)} \oplus \t^1_2 \cong \u(2)$, which act on $W_{\la_1} \oplus W_{\la_2}$ and $W_{\mu_1} \oplus W_{\mu_2}$, respectively, via the standard representation.

Just as in case (a), it now follows that $\g'_i \cong \su(3)$ with $[\g'_1, \g'_2] = 0$, so that for $\g' := \g'_1 \oplus \g'_2$ we have $\k_0 \cap \g' = \u(2) \oplus \u(2)$, and $\h_0  \cap \g' = \triangle \su(2) \oplus \z(\u(2) \oplus \u(2)) \subset \u(2) \oplus \u(2)$ as asserted.
\ei
\ei
\vspace{-9mm}
\end{proof}

\begin{prop} \label{prop:G2}
Let $H \subset K \subset G$ be a triple corresponding to the second entry of Table~3 such that (\ref{eq:condition}) is satisfied.

Then there are compact Lie groups $H' \subset G'$ such that $G = G_2 \times G'$, $K = SO(4) \times H'$ and $H = (SU(2)_1 \cdot T) \times H'$, where $T \subset SU(2)_3$ is at most one dimensional.
\end{prop}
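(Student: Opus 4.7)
The plan is to combine the structural data provided by Proposition~\ref{prop:list} with a direct analysis of how $K$ embeds in $G$.

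The structural part proceeds as follows. By Proposition~\ref{prop:list} (applied via Proposition~\ref{prop:symmetric-list}), the hypothesis places us in the case $L = SU(2)_3 \subset G_2$ with $G_1/(N_0 \cap G_1) = G_2/SO(4)$, so $G_1 \subset G$ is a normal subgroup with Lie algebra $\g_2$. Since $G_2$ has trivial center, $G_1$ is literally isomorphic to $G_2$ (not a proper quotient), and its centralizer $G' := Z_G(G_1)^0$ is a complementary normal subgroup; thus $G = G_2 \times G'$. The ideal $\k_0 = \lb \m \rb$ coincides with $\l = \su(2)_3$, so the connected subgroup $K_0$ equals $SU(2)_3 \subset G_2 \times \{1\}$. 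Consulting Table~1 with $K_0 \cong SU(2)$ and using $\m_1 \neq 0$, the effective isotropy $H_0 \subset K_0$ is an at most one-dimensional subgroup $T \subset SU(2)_3$. Moreover, since $V_8 = \g_2 \ominus \so(4)$ is an $L$-irreducible spin-type summand lying in $\Phi_2$ (Proposition~\ref{prop:stabilizer3}), it embeds entirely into $\s$; in particular $\k \subset \n(\l) = \so(4) \oplus \g'$.

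The main step is then to determine the ineffective kernel $\h' := \k \ominus \k_0$. Because $\h' \lhd \k$ and $[\h', \k_0] \subset \h' \cap \k_0 = 0$, we have $\h' \subset \z_\g(\l) = \su(2)_1 \oplus \g'$. I would argue that $\h'$ must contain all of $\su(2)_1$. The strategy is by contradiction: assume $\su(2)_1 \not\subset \h'$, so that the complement $W := \s \cap \z_\g(\l) \subset \su(2)_1 \oplus \g'$ has non-zero projection onto $\su(2)_1$. Choosing orthonormal $X_\m, Y_\m \in \m_1$, so that $[X_\m, Y_\m]$ is a non-zero element of $\t$, I would exploit (i) the surjectivity of the $SO(4)$-equivariant moment map $V_8 \wedge V_8 \to \so(4) = \su(2)_1 \oplus \su(2)_3$ onto each simple factor, which follows from simplicity of $\g_2$; and (ii) the additional degrees of freedom provided by the $\su(2)_1$-component of $W$, which contribute to the $\su(2)_1$-part of $[X_\s, Y_\s]$ through $[W, W] \subset \su(2)_1$. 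These together should allow the construction of $X, Y \in \m_1 \oplus \s$ whose bracket $[X, Y]$ has vanishing $\su(2)_3$-, $\su(2)_1$-, and $V_8$-components while $X_\m \wedge Y_\m \neq 0$, thereby contradicting~(\ref{eq:condition}). A parallel dimension count shows that in the untwisted case, where the $\su(2)_1$-component of $W$ vanishes, the analogous system of $14$ linear/bilinear equations on $V_8 \oplus V_8$ is incompatible, explaining why the split structure is precisely what (\ref{eq:condition}) forces. Carrying out this construction rigorously is the main obstacle of the proof.

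Once $\su(2)_1 \subset \h'$, set $\h'' := \h' \cap \g'$; a similar but simpler argument — ruling out residual graph-type subalgebras between $\su(2)_1$ and $\g'$ by the same $V_8$-moment-map construction — yields $\h' = \su(2)_1 \oplus \h''$. Letting $H' \subset G'$ be the connected subgroup with Lie algebra $\h''$ (enlarged by any disconnected components of $\h'$ projected to $G'$), we read off $\k = \su(2)_3 \oplus \su(2)_1 \oplus \h''$, whence $K = SU(2)_3 \cdot SU(2)_1 \cdot H' = SO(4) \times H'$ and $H = T \cdot SU(2)_1 \cdot H' = (SU(2)_1 \cdot T) \times H'$, completing the proof.
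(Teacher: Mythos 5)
Your structural reduction is sound and matches the first part of the paper's argument: since $\g_2\lhd\g$ and $Z(G_2)=1$ one gets $G=G_2\times G'$, the effective part of the sphere action forces $K_0=SU(2)_3$ with $H_0=T$ at most one dimensional, and the ineffective kernel satisfies $\h'\subset\z_\g(\su(2)_3)=\su(2)_1\oplus\g'$ so that $\H^2\subset\s$. (Incidentally, once $\su(2)_1\subset\h'$ is known, the splitting $\h'=\su(2)_1\oplus(\h'\cap\g')$ is automatic, so your final ``graph-type'' discussion is not needed.) But the entire content of the proposition is the step you explicitly leave open, namely that $\su(2)_1\subset\h$, and your proposed route to it has a genuine gap.

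You plan to derive a contradiction by producing, when $\su(2)_1\not\subset\h'$, an honest pair $X,Y\in\m_1\oplus\s$ with $X_\m\w Y_\m\neq 0$ whose bracket has vanishing $\su(2)_3$-, $\su(2)_1$- and $V_8$-components. This cannot work as stated, for two reasons. First, killing only the $\g_2$-components of $[X,Y]$ does not contradict (\ref{eq:condition}): the $\g'$-component also contributes to $|[X,Y]|$, and brackets involving the mixed element $s=s_1+s'$ with $s'\in\g'$ generically produce such components, so you would need $[X,Y]=0$ outright. Second, and decisively, an exactly commuting pair with $X_\m\w Y_\m\neq 0$ need not exist in precisely the twisted situation you must exclude: Remark~\ref{rem:limit} of the paper takes $H=\triangle SU(2)\subset SU(2)_1\times SU(2)$, $K=H\times SU(2)_3$ in $G_2\times SU(2)$ --- a configuration with $\su(2)_1\not\subset\h$ --- where (\ref{eq:condition}) fails although no such commuting pair exists. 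Hence the violation of (\ref{eq:condition}) can only be exhibited asymptotically, which is exactly what the paper's proof does: using $\g_2=(\sp(1)_3\oplus\sp(1)_1)\oplus\H^2$ with explicit generators $E_0,E_\pm$ of $\sp(1)_3$, arranging $\t\subset\R E_+$ so that $E_0,E_-\in\m$, choosing $s\in\s$ whose $\su(2)_1$-part acts on $\H^2$ as right multiplication by $i$, and verifying $[e_1,e_2]=\la E_+$ with $\la\neq0$, one sets $X_n=E_0-3s-\frac{2}{\la n}e_2$ and $Y_n=E_-+n\,e_1$; then $[X_n,Y_n]=\frac{2}{\la n}E_-e_2\to0$ while $(X_n)_\m\w(Y_n)_\m=E_0\w E_-$ is a fixed nonzero bivector, so no constant $C$ in (\ref{eq:condition}) can exist. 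Your appeal to surjectivity of the moment map and a dimension count is a heuristic, not a substitute for such a construction, and as you note yourself it is ``the main obstacle'' --- which here is the whole proof.
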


\begin{proof}
Since in this case $\g_2 \lhd \g$, it follows that $G = (G_2 \times G')/\Gamma$ for some finite subgroup $\Gamma \subset Z(G_2 \times G') = Z(G_2) \times Z(G')$. But $Z(G_2) = 1$ so that $\Gamma \subset G'$, and replacing $G'$ by $G'/\Gamma$ we have $G = G_2 \times G'$.

Now $K_0 = SU(2)_3 \subset K$, and $H = T \cdot \tilde H$ where $T \subset K_0$ is at most one dimensional and $\tilde H \subset Z_G SU(2)_3 = SU(2)_1 \times G'$ is the ineffective kernel of the action of $K$ on $S^2$ or $S^3$, respectively. Thus, the proposition follows if we can show that $SU(2)_1 \subset \tilde H \subset H$ or, equivalently, $\su(2)_1 \subset \h$.

As in \cite{ST1}, we decompose the Lie algebra $\g_2$ according to the symmetric pair decomposition of $G_2/SO(4)$ as
\[
\g_2 = (\sp(1)_3 \oplus \sp(1)_1) \oplus \H^2,
\]
where $\sp(1)_3 \subset \sp(2)$ is the Lie algebra spanned by
\[
E_0 := \left(\ba{cc} 3i & \\ & i \ea \right),
E_+:= \left(\ba{cc} 0 & \sqrt 3\\ -\sqrt 3 & 2j \ea \right),
E_-:= \left(\ba{cc} 0 & \sqrt 3 i\\ \sqrt 3 i & 2k \ea \right)
\]
and acts on $\H^2$ from the left, whereas $\sp(1)_1= Im(\H)$ acts via scalar multiplication from the right. Indeed, one verifies the bracket relations
\[
{}[E_0, E_\pm] = \pm 2 E_\mp, \mbox{ and  } [E_+, E_-] = 2 E_0.
\]

Evidently, $\H^2 \subset \s$. Moreover, since $\t \subset \su(2)_3$ is at most one dimensional, we may conjugate $K$ by an appropriate element of $SU(2)_3$ and assume w.l.o.g. that $\t \subset \R E_+$ so that $E_0, E_- \in \m$.

Suppose that $\su(2)_1 \subsetneq \h$. Then there must be an element $s = s_1 + s' \in \s$ with $s' \in \g'$ and $0 \neq s_1 \in \su(2)_1$. Again, after conjugating $K$ by an appropriate element of $SU(2)_1$ and rescaling $s$, we may assume w.l.o.g. that $ad_s: \H^2 \ra \H^2$ corresponds to right multiplication by $i \in Im(\H)$. We let
\[
e_1 := \left( \ba{l} 1\\ 0 \ea \right) \mbox{ and } e_2 := \left( \ba{l} 0\\ 1 \ea \right) \in \H^2
\]
be the standard basis. We assert that $[e_1, e_2] = \la E_+$ for some $0 \neq \la \in \R$. To see this, note that $[e_1, e_2] \in \su(2)_3 \oplus \su(2)_1$ since $(\g_2, \su(2)_3 \oplus \su(2)_1)$ is a symmetric pair. Moreover, for $q \in \su(2)_1 \cong Im(\H)$ we have
\begin{eqnarray*}
Q(q, [e_1, e_2]) = Q([q, e_1], e_2) & = & Q\left(\left( \ba{c} q\\ 0 \ea \right), \left( \ba{c} 0\\ 1 \ea \right)\right) = 0,\\
Q(E_0, [e_1, e_2]) = Q([E_0, e_1], e_2) & = & Q\left(\left( \ba{c} 3i\\ 0 \ea \right), \left( \ba{c} 0\\ 1 \ea \right)\right) = 0,\\
Q(E_-, [e_1, e_2]) = Q([E_-, e_1], e_2) & = & Q\left(\left( \ba{c} 0\\ \sqrt 3 i \ea \right), \left( \ba{c} 0\\ 1 \ea \right)\right) = 0,\\
Q(E_+, [e_1, e_2]) = Q([E_+, e_1], e_2) & = & Q\left(\left( \ba{c} 0\\ -\sqrt 3 \ea \right), \left( \ba{c} 0\\ 1 \ea \right)\right) =  -\sqrt 3 |e_2|^2_Q \neq 0,
\end{eqnarray*}
since $Q|_{\H^2}$ must be a multiple of the standard inner product by irreducibility of $G_2/SO(4)$. Now let us define the following sequence $X_n, Y_n \in \m \oplus \s$.
\[
X_n := E_0 - 3 s - \frac 2{\la n} e_2, \mbox{ and } Y_n := E_- + n e_1
\]
for $0 \neq \la \in \R$ from above. Note that 
\[
{}[E_0 - 3 s, e_1] = E_0 e_1 - 3 e_1 i = 0,
\]
hence
\begin{eqnarray*}
{}[X_n, Y_n] & = & \left[E_0 - 3 s - \frac 2 {\la n} e_2,E_- + n e_1\right] =-2 E_+ + \frac 2 {\la n} E_- e_2 - \frac 2 \la \underbrace{[e_2, e_1]}_{= - \la E_+}\\ 
& = & \frac 2 {\la n} E_- e_2.
\end{eqnarray*}
Thus, $\lim [X_n, Y_n] = 0$ whereas $(X_n)_\m \w (Y_n)_\m = E_0 \w E_- \neq 0$ is constant. This violates (\ref{eq:condition}) and gives the desired contradiction.
\end{proof}

\begin{rem} \label{rem:limit} 
If we consider $G := G_2 \times SU(2)$ and $H = \triangle SU(2) \subset SU(2)_1 \times SU(2) \subset G_2 \times SU(2)$, and $K = H \times SU(2)_3$, then (\ref{eq:condition}) is violated by Proposition~\ref{prop:G2}. On the other hand, one can show that there are no $X, Y \in \m \oplus \s$ with $X_\m \w Y_\m \neq 0$ and $[X, Y] = 0$. That is, condition (\ref{eq:condition}) cannot be weakened to the property that $[X, Y] = 0$ only if $X_\m, Y_\m$ are linearly dependent.
\end{rem}

We call a homogeneous vector bundle $M = G \times_K V$ {\em $G$-reducible} if there is a non-trivial decomposition of the Lie algebra $\g = \g_1 \oplus \g_2$ such that $\k = \k_1 \oplus \k_2$ where $\k_i := \k \cap \g_i$ and such that $\k_2 \subsetneq \g_2$ acts trivially on $V$. Otherwise, we call $M$ {\em $G$-irreducible}.

If $M$ is reducible, then -- allowing for an ineffective action -- we may assume that $G = G_1 \times G_2$. Moreover, after replacing $M$ by a finite $G$-equivariant cover $\tilde M$, we may assume that $K$ is connected and hence $K = K_1 \times K_2$ with $K_i := K \cap G_i$, and $K_2 \subset K$ acts trivially on $V$. Thus, $\tilde M = (G_2/K_2) \times M'$ where $M' = G_1 \times_{K_1} V$ and  $\dim(G_2/K_2) > 0$, and $\tilde M$ has a $G$-invariant metric of nonnegative curvature with normal homogeneous collar if and only if $M'$ does. Thus, it is natural to assume that $M$ is $G$-irreducible.

\

\noindent {\bf Proof of Theorems~\ref{thm:rank-restriction} - ~\ref{thm:rank-4}}
It was already shown in section~\ref{S:constructions} that the bundles asserted in these theorems admit invariant nonnegatively curved metrics with normal homogeneous collar, so it remains to show that there cannot be any others. In particular, we may assume from now on that the rank of the disk bundle is $\geq 3$ and $\neq 4$ so that, in particular, $\m_1 \neq 0$.

If $L \subset G$ is a normal subgroup, then after replacing $G$ by a finite cover, we may assume that $G = G' \times L$. Since $L \subset K$, we must have $K = H' \times L$ for some subgroup $H' \subset G'$. Moreover, $L \subset O(V)$ acts transitively on the unit sphere, so that $M = (G' \times L) \times_{H' \times L} V$ is essentially trivial. 

If $L \subset G$ is {\em not} normal and $L \subsetneq \tilde L$, then, since we assume that rank to be $\neq 4$ and thus, $L \neq SU(2) \cdot SU(2)$, Proposition~\ref{prop:symmetric-list} implies that $\tilde L \subset G$ is normal, whence $G = \tilde L \cdot G'$. Replacing $G$ and hence $\tilde L$ by a finite cover, we may assume that $\tilde L$ is simply connected. Thus, by Proposition~\ref{prop:N0}, we have $L = Spin(n) \subset Spin(m) = \tilde L$ for $(n, m) \in \{(6,7), (7,8),(7,9)\}$, where in either case $L$ acts on $S^7$ so that $M \ra G/K$ is a bundle of rank~$8$.

We have $K = K_0 \cdot H' = Spin(n) \cdot T \cdot H'$ where $\dim T \leq 1$, and $T = S^1$ is possible only for $(n,m) = (6,7)$. Thus, $T \cdot H' \subset Z_{Spin(m)}(Spin(n)) \cdot G'$ where $Z$ denotes the centralizer.

If $(n,m) = (6,7)$ or $(7,8)$, then $(Z_{Spin(m)}Spin(n))_0 = 1$, hence $T \cdot H' \subset G'$. If $T = 1$ then the condition of $G$-irreducibility implies that $G' = 1$, which is the second case for $p = 0$ and the third case of Theorem~\ref{thm:rank-8}, respectively. If $T = S^1$ then $(n,m) = (6,7)$ which corresponds to case 4(c) of that theorem.

If $(n,m) = (7,9)$, then $T = 1$ and $H' \subset Z_{Spin(9)}Spin_0(7) \cdot G' = Spin(2) \cdot G'$. The case $H' = 1$ is the second case of Theorem~\ref{thm:rank-8} with $p = 1$. If $H' \neq 1$, then by $G$-irreducibility, we have $H' \not \subset G'$, corresponding to case 4(b) of Theorem~\ref{thm:rank-8}.

Finally, suppose that $L = \tilde L \subset G$ is not normal and different from $SU(2) \cdot SU(2)$. Then by  Proposition~\ref{prop:list} there must be a normal subgroup $G_1 \subset G$ containing $L$ which is either $SU(5)$, $G_2$, $Sp(p+1)$ for $p \geq 1$, or $Spin(p+9)$ for $p \in \{0,1,2\}$. In either case, $L = Sp(1) \cong SU(2)$ acting on $S^2$ or $S^3$, $L = SU(4) \cong Spin(6)$ acting on $S^5$ or $L = Spin(8)$ acting on $S^7$ by the spin representation which shows that the rank is as asserted in Theorem~\ref{thm:rank-restriction}. Furthermore, $K_0 = L$ and hence, $K = L \cdot H'$ with $H' \subset Norm_{G}L$.

If $G_1 = SU(5)$ so that $G = SU(5) \cdot G'$, then $H' \subset Norm_{G}L = S^1 \cdot G'$. If $H' = 1$ then this is the first case of Theorem~\ref{thm:rank-6}; otherwise, we get the second case of that theorem.

If $G_1 = G_2$, then by Proposition~\ref{prop:G2}, we must have $SU(2)_1 \subset H$, and from there, the hypothesis of $G$-irreducibility implies that $G = G_2$, which yields the first entry in Theorem~\ref{thm:rank-3}.

If $G_1 = Sp(p+1)$ so that $G = Sp(p+1) \cdot G'$, then $H' \subset Norm_{G}L = Sp(p) \cdot G'$ which corresponds to the second entry in Theorem~\ref{thm:rank-3}.

Finally, in the last case, $G_1 = Spin(p + 9)$ so that $G = Spin(p + 9) \cdot G'$ and $K = Spin(8) \cdot H'$ with $H' \subset Z_G Spin(8) = Spin(p+1) \cdot G'$. If $H' = 1$ then this corresponds to the first case of Theorem~\ref{thm:rank-8}; the general case is listed in 4(a) of that theorem.
{\hfill \rule{.5em}{1em}\mbox{}\bigskip}

{\small

\noindent
{\sc Fakult\"at f\"ur Mathematik, Technische Universit\"at Dortmund, Vogelpothsweg 87, 44227 Dortmund, Germany}

\noindent
{\em E-mail address:} lschwach@math.uni-dortmund.de 

\

\noindent
{\sc Department of Mathematics, Saint Joseph University, 5600 City Avenue Philadelphia, PA 19131, USA}

\noindent 
{\em E-mail address:} ktapp@sju.edu}
\end{document}